\crefname{equation}{}{}
\crefname{condition}{condition}{conditions}
\crefname{item}{}{}
\crefname{warning}{Warning}{Warnings}
\setlist{nosep,leftmargin=*,labelsep=.5em}
\setlist[1]{labelindent=\parindent}
\newtheorem{theoremold}{Theorem}
\newtheorem{propositionold}{Proposition}
\newtheorem{corollaryold}{Corollary}
\newtheorem{lemmaold}{Lemma}
\renewenvironment{theorem}{%
  \let\labelold\label%
  \def\label##1{\labelold[theorem]{##1}}%
  \begin{theoremold}%
}{\end{theoremold}}
\renewenvironment{proposition}{%
  \let\labelold\label%
  \def\label##1{\labelold[proposition]{##1}}%
  \begin{propositionold}%
}{\end{propositionold}}
\renewenvironment{corollary}{%
  \let\labelold\label%
  \def\label##1{\labelold[corollary]{##1}}%
  \begin{corollaryold}%
}{\end{corollaryold}}
\renewenvironment{lemma}{%
  \let\labelold\label%
  \def\label##1{\labelold[lemma]{##1}}%
  \begin{lemmaold}%
}{\end{lemmaold}}
\renewenvironment{definition}{%
  \let\labelold\label%
  \def\label##1{\labelold[definition]{##1}}%
  \begin{definitionold}%
}{\end{definitionold}}
\renewenvironment{remark}{%
  \let\labelold\label%
  \def\label##1{\labelold[remark]{##1}}%
  \begin{remarkold}%
}{\end{remarkold}}
\renewenvironment{example}{%
  \let\labelold\label%
  \def\label##1{\labelold[example]{##1}}%
  \begin{exampleold}%
}{\end{exampleold}}
\newenvironment{warning}{%
  \let\labelold\label%
  \def\label##1{\labelold[warning]{##1}}%
  \begin{warningold}%
}{\end{warningold}}
\tikzset{
  entity/.style = {
    circle,
    draw,
    minimum size=7mm,
    fill=black!20,
    inner sep=0pt,
    font={\scriptsize\ttfamily}
  },
  type/.style = {
    circle,
    draw,
    minimum size=7mm,
    inner sep=0pt,
    font={\scriptsize\sffamily}
  },
  schema/.style = {
    x=2cm, y=2cm,
    node distance=1 and 1,
    on grid,
    shorten >=1pt,
    bend angle=25,
    every edge quotes/.style = {auto, font={\tiny\ttfamily}},
    every edge/.style = {draw, ->},
    every fit/.style = {draw, semithick, inner sep=9pt},
    baseline=(current bounding box.center)
  },
  schema eqs/.style = {font={\tiny},align=left,inner sep=0pt},
  schema name/.style = {node distance=9pt and 9pt}
}
\newcommand{\simrightarrow}{\xrightarrow{\raisebox{-4pt}[0pt][0pt]{\ensuremath{\sim}}}}
\DeclareMathOperator{\id}{id}
\DeclareMathOperator{\dom}{dom}
\DeclareMathOperator{\cod}{cod}
\DeclareMathOperator{\colim}{colim}
\DeclareMathOperator{\Hom}{Hom}
\DeclareMathOperator{\Ob}{Ob}
\newcommand{\iso}{\cong}
\renewcommand{\equiv}{\simeq}
\newcommand{\Ran}{\mathrm{Ran}}
\newcommand{\Lan}{\mathrm{Lan}}
\newcommand{\Fun}[1]{[#1]}
\newcommand{\BigFun}[1]{\left[#1\right]}
\newcommand{\subsub}[3]{{}_{#2}#1_{#3}}
\mathchardef\mhyphen="2D 
\def\dash{\unskip\kern.16667em---\penalty\exhyphenpenalty\hskip.16667em\relax}
\newcommand{\cat}[1]{\mathscr{#1}} 
\newcommand{\ncat}[1]{\mathbf{#1}} 
\newcommand{\ccat}[1]{\bm{\mathcal{#1}}} 
\newcommand{\dcat}[1]{\vmathbb{#1}} 
\newcommand{\nfun}[1]{\mathbf{#1}}  
\newcommand{\nncat}[2]{\ccat{#1}\ncat{#2}} 
\newcommand{\ndcat}[2]{\dcat{#1}\ncat{#2}}
\newcommand{\Type}{\ncat{Type}}
\newcommand{\schema}[1]{\mathbf{#1}} 
\newcommand{\snodes}[1]{#1_{\tn{e}}} 
\newcommand{\satts}[1]{#1_{\tn{o}}} 
\newcommand{\scol}[1]{\widetilde{#1}} 
\newcommand{\incnodes}[1]{i_{#1}}
\newcommand{\inctypes}{i_{\ncat{T}}}
\newcommand{\TypeAlg}{\Type\alg}
\newcommand{\map}[1]{\bm{\mathrm{#1}}} 
\newcommand{\mnodes}[1]{#1_{\tn{e}}} 
\newcommand{\matts}[1]{#1_{\tn{o}}} 
\newcommand{\mcol}[1]{\widetilde{#1}} 
\newcommand{\inst}[1]{\mathbf{#1}} 
\newcommand{\inodes}[1]{#1_{\tn{e}}} 
\newcommand{\iterms}[1]{#1_{\tn{t}}} 
\newcommand{\iatts}[1]{#1_{\tn{o}}} 
\newcommand{\trans}[1]{\bm{\mathrm{#1}}} 
\newcommand{\tnodes}[1]{#1_{\tn{e}}} 
\newcommand{\tterms}[1]{#1_{\tn{t}}} 
\newcommand{\bimod}[1]{\mathbf{#1}} 
\newcommand{\bnodes}[1]{#1_{\tn{e}}}
\newcommand{\bterms}[1]{#1_{\tn{t}}}
\newcommand{\batts}[1]{#1_{\tn{o}}}
\newcommand{\bret}[1]{#1_{\tn{r}}}
\newcommand{\bcol}[1]{\widetilde{#1}} 
\newcommand{\bint}[1]{#1_{\tn{o}}} 
\newcommand{\twoCell}[1]{\bm{#1}} 
\newcommand{\twonodes}[1]{#1_{\tn{e}}} 
\newcommand{\twoterms}[1]{#1_{\tn{t}}} 
\newcommand{\twocol}[1]{\widetilde{#1}} 
\newcommand{\eqnodes}[1]{#1_{\tn{e}}} 
\newcommand{\eqatts}[1]{#1_{\tn{o}}} 
\newcommand{\query}[1]{#1}
\newcommand{\qfor}[1]{#1_{f}}
\newcommand{\qwhere}[1]{#1_{w}}
\newcommand{\qatts}[1]{#1_{a}}
\newcommand{\qret}[1]{#1_{r}}
\newcommand{\sinst}[1]{\schema{#1}\ncat{\mhyphen\mkern-.7muInst}} 
\newcommand{\Schema}{\ncat{Schema}}
\newcommand{\Data}{\ndcat{D}{ata}}
\newcommand{\ATh}{\ncat{ATh}}
\newcommand{\ASig}{\ncat{ASig}}
\newcommand{\Cxt}[1]{\ncat{Cxt}_{#1}}
\newcommand{\piCxt}[1]{\overline{\Cxt{#1}}}
\newcommand{\APr}{\ncat{APr}}
\newcommand{\Fr}{\mathrm{Fr}}
\newcommand{\ProfTimes}{\ncat{Prof}^{\times}}
\newcommand{\LambdaTimes}{\Lambda^{\times}}
\newcommand{\comp}[1]{\widehat{#1}}
\newcommand{\conj}[1]{\widecheck{#1}}
\newcommand{\One}{\mathsf{1}}
\newcommand{\emptyContext}{\varnothing}
\newcommand{\Ents}{\textsf{Entities}}
\newcommand{\Edges}{\textsf{Edges}}
\newcommand{\PathEqs}{\textsf{Path Eqs}}
\newcommand{\Atts}{\textsf{Attributes}}
\newcommand{\ObsEqs}{\textsf{Obs. Eqs}}
\newcommand{\Set}{\ncat{Set}} 
\newcommand{\Cat}{\ncat{Cat}} 
\newcommand{\CCat}{\nncat{C}{at}} 
\newcommand{\Prof}{\nncat{P}{rof}} 
\newcommand{\dProf}{\ndcat{P}{rof}} 
\newcommand{\EntOb}[1]{\ensuremath{\mathtt{#1}}}
\newcommand{\Mor}[1]{\ensuremath{\mathtt{#1}}}
\newcommand{\TypeOb}[1]{\mathsf{#1}}
\newcommand{\Int}{\TypeOb{Int}}
\newcommand{\Str}{\TypeOb{Str}}
\newcommand{\Bool}{\TypeOb{Bool}}
\newcommand{\unit}{\mathsf{U}}
\newcommand{\lframe}{\mathsf{L}}
\newcommand{\rframe}{\mathsf{R}}
\newcommand{\Ver}{\ncat{Vert}} 
\newcommand{\HHor}{\bm{\ccat{H}}} 
\newcommand{\ZZ}{\mathbb{Z}}
\newcommand{\Col}[1]{\mathbf{Col}(\mathbf{#1})}
\newcommand{\col}[1]{\mathrm{Col}(#1)}
\newcommand{\Colnodes}[1]{\snodes{\col{#1}}}
\newcommand{\Colatts}[1]{\satts{\col{#1}}}
\newcommand{\op}[1]{#1^{\mathrm{op}}}
\newcommand{\alg}{\ncat{\mhyphen\mkern-1.5muAlg}}
\newcommand{\T}[1][3]{\cat{T}\mkern-#1mu}
\newcommand{\Bimod}[2]{{}_{#1}\mathrm{Bimod}_{#2}}
\newcommand{\yoneda}{\mathbf{y}}
\newcommand{\LAdj}{\nncat{L}{Adj}}
\newcommand{\RAdj}{\nncat{R}{Adj}}
\newcommand{\Simp}[2]{{}_{#1}\ncat{Simp}_{#2}}
\newcommand{\Cocone}[2]{{}_{#1}\ncat{Cocone}_{#2}}
\newcommand{\Cone}[2]{{}_{#1}\ncat{Cone}_{#2}}
\newcommand{\res}[2]{{}_{#1}\mathrm{Res}_{#2}}
\newcommand{\FrAlg}[2][\kappa]{#1[#2]}
\newcommand{\FrCpsh}[1]{\langle#1\rangle}
\newcommand{\FrInst}[1]{\langle#1\rangle}
\newcommand{\RR}{\mathbb{R}}
\newcommand*{\cocolon}{
  \nobreak
  \mskip6mu plus1mu
  \mathpunct{}%
  \nonscript
  \mkern-\thinmuskip
  {:}%
  \mskip2mu
  \relax
}
\newcommand{\tn}[1]{\mathrm{#1}}
\newcommand{\singleton}[1][\ast]{\{#1\}}
\newcommand{\from}{\leftarrow}
\newcommand{\tickar}{\begin{tikzcd}[baseline=-0.5ex,cramped,sep=small,ampersand replacement=\&]{}\ar[r,tick]\&{}\end{tikzcd}}
\newcommand{\xtickar}[1]{\stackrel{#1}{\tickar}}
\newcommand{\tickxar}{\begin{tikzcd}[baseline=-0.5ex,cramped,sep=small,ampersand replacement=\&]{}\ar[r,tickx]\&{}\end{tikzcd}}
\newcommand{\xtickxar}[1]{\stackrel{#1}{\tickxar}}
\newcommand{\twocell}[3][]{\arrow[draw=none,to path={(dom#2.center)--(cod#2.center)\tikztonodes}]{}[anchor=center,#1]{\Downarrow #3}}
\newcommand{\twocellalt}[3][]{\arrow[draw=none,to path={(dom#2.center)--(cod#2.center)\tikztonodes}]{}[anchor=center,#1]{#3}}
\newcommand{\twocellA}[2][]{\twocell[#1]{A}{#2}}
\newcommand{\twocellB}[2][]{\twocell[#1]{B}{#2}}
\newcommand{\twocellC}[2][]{\twocell[#1]{C}{#2}}
\newcommand{\FPQL}{CQL}
\tikzset{
  tick/.style={postaction={
    decorate,
    decoration={markings, mark=at position 0.5 with {\draw[-] (0,.4ex) -- (0,-.4ex);}}}
  },
  tickx/.style={
    postaction={ decorate,
      decoration={markings,
        mark=at position 0.5 with {
          \fill circle [radius=.28ex];
        }
      }
    }
  }
}
\tikzset{
   dom/.style={append after command={coordinate[alias=dom#1]}},
   domA/.style={dom=A}, domB/.style={dom=B},
   domC/.style={dom=C}, domD/.style={dom=D},
   domE/.style={dom=E}, domF/.style={dom=F},
   cod/.style={append after command={coordinate[alias=cod#1]}},
   codA/.style={cod=A}, codB/.style={cod=B},
   codC/.style={cod=C}, codD/.style={cod=D},
   codE/.style={cod=E}, codF/.style={cod=F}
}
\DeclareFontFamily{U}{mathx}{\hyphenchar\font45}
\DeclareFontShape{U}{mathx}{m}{n}{
      <5> <6> <7> <8> <9> <10>
      <10.95> <12> <14.4> <17.28> <20.74> <24.88>
      mathx10
      }{}
 \DeclareRobustCommand\widecheck[1]{{\mathpalette\@widecheck{#1}}}
 \def\@widecheck#1#2{%
     \setbox\z@\hbox{\m@th$#1#2$}%
     \setbox\tw@\hbox{\m@th$#1%
        \widehat{%
           \vrule\@width\z@\@height\ht\z@
           \vrule\@height\z@\@width\wd\z@}$}%
     \dp\tw@-\ht\z@
     \@tempdima\ht\z@ \advance\@tempdima2\ht\tw@ \divide\@tempdima\thr@@
     \setbox\tw@\hbox{%
        \raise\@tempdima\hbox{\scalebox{1}[-1]{\lower\@tempdima\box
 \tw@}}}%
     {\ooalign{\box\tw@ \cr \box\z@}}}
\title{Algebraic\;\;Databases}
\author[Schultz,\;\;Spivak,\;\;Vasilakopoulou,\;\;Wisnesky]%
{Patrick Schultz,\;\;David I. Spivak,\;\;Christina Vasilakopoulou, \;and\;\;Ryan Wisnesky}
\thanks{Schultz, Spivak, and Vasilakopoulou were supported by AFOSR grant FA9550--14--1--0031, ONR grant N000141310260, and NASA grant NNL14AA05C.
Wisnesky was supported by NIST SBIR grant 70NANB15H290.}
\address{Massachusetts Institute of Technology, 77 Massachusetts Ave.\ Cambridge, MA 02139}
\keywords{Databases, algebraic theories, proarrow equipments, collage construction, data migration}
\begin{document}

\linespread{1}\selectfont
\maketitle

\begin{abstract}
  Databases have been studied category-theoretically for decades. While mathematically elegant,
  previous categorical models have typically struggled with representing concrete data such as
  integers or strings.

  In the present work, we propose an extension of the earlier set-valued functor model, making use of
  multi-sorted algebraic theories (a.k.a.\ Lawvere theories) to incorporate concrete data in a
  principled way. This approach easily handles missing information (null values), and also allows
  constraints and queries to make use of operations on data, such as multiplication or comparison of
  numbers, helping to bridge the gap between traditional databases and programming languages.

  We also show how all of the components of our model \dash including schemas, instances,
  change-of-schema functors, and queries \dash fit into a single double categorical structure called
  a proarrow equipment (a.k.a.\ framed bicategory).
\end{abstract}

\tableofcontents

\linespread{1.15}\selectfont

\section{Introduction}

Category-theoretic models of databases have been present for some time. For example
in~\cite{Rosebrugh.Wood:1992a,Fleming.Gunther.Rosebrugh:2003a,Johnson.Rosebrugh:2002a} databases
schemas are formalized as sketches of various sorts (e.g.\ EA sketches = finite limits +
coproducts). The data itself (called an \emph{instance}) is represented by a model of the sketch. In
this language, queries can be understood as limit cones in such a sketch. While different from the
traditional relational foundations of database theory~\cite{Abiteboul:1995a}, this is in general a
very natural and appealing idea.

In~\cite{Spivak:2013a}, Spivak puts emphasis on the ability to move data from one format, or
database schema, to another. To enable that, he proposes defining schemas to be mere categories
\dash or in other words trivial sketches (with no (co)limit cones). A schema morphism is just a
functor. Unlike the case for non-trivial sketches, a schema morphism induces three adjoint functors,
the pullback and its Kan extensions. These functors can be called \emph{data migration functors}
because they transfer data from one schema to another. In this formalism, queries can be recovered
as specific kinds of data migration.

Both of the above approaches give some secondary consideration to attributes, e.g.\ the name or
salary of an employee, taking values in some data type, such as strings, integers, or booleans.
Rosebrugh et al.\ formalized attributes in terms of infinite coproducts of a chosen terminal object,
whereas Spivak formalized them by slicing the category of copresheaves over a fixed object. However,
neither approach seemed to work convincingly in implementations \cite{Spivak:2015a}.

\subsection{The approach of this paper}

In the present paper, the goal of providing a principled and workable formalization of attributes is
a central concern. We consider attribute values as living in an algebra over a multi-sorted algebraic
theory, capturing operations such as comparing integers or concatenating strings. A database schema
is formalized as what we call an \emph{algebraic profunctor}, which is a profunctor from a category
to an algebraic theory that preserves the products of the theory. Each element of the profunctor
represents an observation of a given type (string, integer, boolean) that can be made on a certain
entity (employee, department). For example, if an entity has an observable for length and width, and
if the theory has a multiplication, then the entity has an observable for area.

We also focus on providing syntax for algebraic databases. We can present a schema, or an instance
on it, using a set of generators and relations. The generators act like the ``labelled nulls'' used in modern relational databases, easily handling unknown information, while the relations
are able to record constraints on missing data. In this sense, our approach can be related to
knowledge bases or ontologies \cite{Munn:2008a}. One can express that Pablo is an employee whose
salary is between 65 and 75, and deduce various facts; for example, if the schema expresses that
each employee's salary is at most that of his or her manager, one can deduce that Pablo's manager
makes at least 65.

Mathematically, this paper develops the theory of algebraic profunctors. An algebraic profunctor can
be regarded as a diagram of models for an algebraic theory $\T$, e.g.\ a presheaf of rings or
modules on a space. Algebraic profunctors to a fixed $\T[2]$ form the objects in a proarrow
equipment \dash a double category satisfying a certain fibrancy condition \dash which we call
$\Data$. This double category includes database schemas and schema morphisms, and we show that the
horizontal morphisms (which we call \emph{bimodules} between schemas) generalize both instances and
conjunctive queries.

We make heavy use of \emph{collages} of profunctors and bimodules. Collages are a kind of
double-categorical colimit which have been studied in various guises under various names \dash
\cite{Garner.Shulman:2013a} gives a good general treatment. We propose exactness properties which
the collage construction satisfies in some examples; we say that an equipment has \emph{extensive
collages} when these properties hold. This fits in with the work started in \cite{Schultz:2015a},
and may be of interest independent of the applications in this paper. Although the present work only
makes use of the properties of extensive collages in the equipment $\dProf$ of categories, functors,
and profunctors, we found more direct proofs of these properties in this case to be no easier and
less illuminating.

To connect the theory with practice, it is necessary to have a concrete syntax for presenting the
various categorical structures of interest. While it is mostly standard, we provide a self-contained
account of a type-theoretic syntax for categories, functors, profunctors, algebraic theories,
algebras over those theories, and algebraic profunctors. We use this syntax to consistently ground
the theoretical development with concrete examples in the context of databases, though the reader
need not have any background in that subject.

\subsection{Implementation}

The mathematical framework developed in this paper is implemented in an open-source
software system we call \FPQL, available for download at \url{http://categoricaldata.net}.  All
examples from this paper are included as built-in demonstrations in the \FPQL\ tool.  We
defer a detailed discussion of \FPQL\ until the end of the paper (\cref{Impl}), but two high-level
introductory remarks are in order.

First, we note that most constructions on finitely-presented categories require solving word problems
in categories and hence are not computable~\cite{Fleming.Gunther.Rosebrugh:2003a}.
Given a category presented by generators $G$ and relations (equations) $E$, the word problem asks if
two terms (words) in $G$ are equal under $E$.  Although not decidable in general, many approaches
to this problem have been proposed; we discuss our particular
approach in \cref{Impl}.  If we can solve the word problem for a particular category presentation, then we can use that
decision procedure to implement query evaluation, construct collages, and perform other tasks.

Second, we note that there are many connections between the mathematical framework presented here and
various non-categorical frameworks.  When restricted to a discrete algebraic theory, the query language
we discuss in \cref{Queries} corresponds exactly to relational algebra's unions of conjunctive queries
under bag semantics~\cite{Spivak:2015a}. This correspondence allows fragments of our framework to
be efficiently implemented using existing relational systems (MySQL, Oracle, etc), and our software has
indeed been used on various real-world examples~\cite{Spivak:2015b}.

\subsection{Outline}

In \cref{sec:profunctors_equipments} we review profunctors and use them to motivate the definition
of double categories and proarrow equipments. We also review, as well as refine, the notion of
collages, which exist in all of the equipments of interest in this paper. In
\cref{sec:algebraic_theories} we review multisorted algebraic theories, and we discuss profunctors
\dash from categories to algebraic theories \dash that preserve products in the appropriate way; we
call these algebraic profunctors. We save relevant database-style examples until
\cref{sec:presentations_syntax}, where we provide type-theoretic syntax for presenting theories,
categories and (algebraic) profunctors. This section serves as a foundation for the syntax used
throughout the paper, especially in examples, though it can be skipped by those who only want to
understand the category theoretic concepts.

We get to the heart of the new material in \cref{sec:schemas} and \cref{sec:instances}, where we
define schemas and instances for algebraic databases and give examples. Morphisms between schemas
induce three adjoint functors \dash called data migration functors \dash between their instance
categories, and we discuss this in \cref{ssec:DataMigrationFunctors}.

In \cref{ThedoublecategoryData} we wrap all of this into a double category (in fact a proarrow
equipment) $\Data$, in which schemas are objects, schema morphisms are vertical morphisms, and
schema bimodules \dash defined in this section \dash are horizontal morphisms. Instances are shown
to be bimodules of a special sort, and the data migration functors from the previous section are
shown to be obtained by composition and exponentiation of instance bimodules with representable
bimodules. In this way, we see that $\Data$ nicely packages all of the structures and operations of
interest.

Finally, in \cref{Queries} we discuss the well-known "Select-From-Where" queries of standard
database languages and show that they form a very special case of our data migration setup. We
conclude with a discussion of the implementation of our mathematical framework in \cref{Impl}.

\subsection{Notation}

In this paper we will adhere to the following notation. For named categories, such as the
category $\Set$ of sets, we use bold roman. For category variables \dash for instance "Let $\cat{C}$
be a category" \dash we use math script. 

Named bicategories or 2-categories, such as the 2-category $\CCat$ of small categories,
will be denoted similarly to named 1-categories except with calligraphic first letter. We use
the same notation for a variable bicategory $\ccat{B}$.

Double categories, such as the double category $\dProf$ of categories, functors, and profunctors, will be denoted like 1-categories except with blackboard bold first letter. We use the same notation for a variable double category $\dcat{D}$.

If $\cat{C}$ and $\cat{D}$ are categories, we sometimes denote the functor category $\CCat(\cat{C},\cat{D})$ by $\Fun{\cat{C},\cat{D}}$
or $\cat{D}^{\cat{C}}$.

\subsection{Acknowlegements}
The authors thank the anonymous referee for many helpful and questions and comments.

\section{Profunctors and proarrow equipments}\label{sec:profunctors_equipments}

We begin with a review of profunctors, which are sometimes called correspondences or distributors;
standard references include \cite{Borceux:1994a-1} and \cite{Benabou:2000a}. Together with
categories and functors, these fit into a proarrow equipment in the sense of Wood \cite{Wood:1982a,
Wood:1985a}, though we follow the formulation in terms of double categories called framed
bicategories (or fibrant double categories), due to Shulman \cite{Shulman:2008a,Shulman:2010a}. Eventually, in
\cref{ThedoublecategoryData}, we will produce an equipment $\Data$ that encompasses database
schemas, morphisms, instances, and queries.

\subsection{Profunctors}
  \label{ssec:profunctors}
Perhaps the most important example of an equipment is that of categories, functors, and profunctors.
We review profunctors here, as they will be a central player in our story.

Let $\cat{C}$ and $\cat{D}$ be categories. Recall that a \emph{profunctor} $M$ from $\cat{C}$ to
$\cat{D}$, written $M\colon\cat{C}\tickar\cat{D}$, is defined to be a functor
$M\colon\op{\cat{C}}\times\cat{D}\to\Set$.

\subsection{Profunctors as matrices}
  \label{ssec:profunctor_matrix}

It can be helpful to think of profunctors as something like matrices. Given finite sets $X$ and $Y$,
there is an equivalence between
\begin{itemize}[nosep]
\item $X\times Y$-matrices $A$ (i.e.\ functions $X\times Y\to\RR$),
\item functions $A\colon X\to\RR^Y$,
\item functions $A\colon Y\to\RR^X$,
\item linear maps $L_A\colon\RR^X\to\RR^Y$,
\item linear maps $L'_A\colon\RR^Y\to\RR^X$.
\end{itemize}
Similarly, there is an equivalence between
\begin{itemize}[nosep]
\item profunctors $M\colon\cat{C}\tickar\cat{D}$,
\item functors $M\colon\op{\cat{C}}\to\Set^{\cat{D}}$,
\item functors $M\colon\cat{D}\to\Set^{\op{\cat{C}}}$,
\item colimit-preserving functors $\Lambda_M\colon\Set^{\cat{C}}\to\Set^{\cat{D}}$,
\item colimit-preserving functors $\Lambda'_M\colon\Set^{\op{\cat{D}}}\to\Set^{\op{\cat{C}}}$.
\end{itemize}
The first three correspondences are straightforward by the cartesian monoidal closed structure of
$\Cat$. The last two follow from the fact that, just as $\RR^Y$ is the free real vector space
on the set $Y$, the category $\Set^{\op{\cat{D}}}$ is the free
completion of $\cat{D}$ under colimits, and similarly for $\Set^{\cat{C}}$.
By the equivalence between colimit-preserving functors $\Set^{\cat{C}}\to\cat{E}$ and functors
$\op{\cat{C}}\to\cat{E}$ for any cocomplete category $\cat{E}$, the functor $\Lambda_M$
is obtained by taking the left Kan extension of $M\colon\op{\cat{C}}\to\Set^{\cat{D}}$ along
the Yoneda embedding $\yoneda\colon\op{\cat{C}}\to\Set^{\cat{C}}$. Using the pointwise formula
for Kan extensions, this means that given any $I\colon\cat{C}\to\Set$, the functor $\Lambda_M(I)
\colon\cat{D}\to\Set$ is given by the coend formula
\begin{equation}
    \label{eqn:LambdaPointwise}
  (\Lambda_MI)(d) = \int^{c\in\cat{C}}I(c)\times M(c,d).
\end{equation}
This is analogous to the matrix formula $(L_Av)_y=\sum_{x\in X}v_xA_{x,y}$.

Alternatively, since colimits in $\Set^{\cat{D}}$ are computed pointwise, we can express
$\Lambda_MI$ itself as a coend in $\Set^{\cat{D}}$
\begin{equation}
    \label{eqn:LambdaNonPointwise}
  \Lambda_MI = \int^{c\in\cat{C}}I(c)\cdot M(c)
\end{equation}
where we think of $M$ as a functor $\op{\cat{C}}\to\Set^{\cat{D}}$. The symbol\; $\cdot$\; represents
the set-theoretic copower (see \cite{Kelly:1982a}), i.e.\ $I(c)\cdot M(c)$ is an $I(c)$-fold coproduct of copies
of $M(c)$. Formula~\cref{eqn:LambdaNonPointwise} is analogous to the matrix formula $L_Av=\sum_{x\in X}A(x)v_x$, where we
think of $A$ as a function $X\to\RR^Y$ and $A(x)v_x$ denotes scalar multiplication by $v_x\in\RR$.
The construction of $\Lambda_M'$ is very similar.

\subsection{Profunctors as bimodules}
  \label{ssec:profunctor_bimodule}

One can also think of a profunctor as a sort of graded bimodule: for each pair of objects
$c\in\cat{C}$ and $d\in\cat{D}$ there is a set $M(c,d)$ of elements in the bimodule, and given an
element $m\in M(c,d)$ and morphisms $f\colon c'\to c$ in $\cat{C}$ and $g\colon d\to d'$ in
$\cat{D}$, there are elements $g\cdot m\in M(c,d')$ and $m\cdot f\in M(c',d)$, such that the
equations $(g\cdot m)\cdot f=g\cdot(m\cdot f)$, $g'\cdot(g\cdot m)=(g'\circ g)\cdot m$, and $(m\cdot
f)\cdot f'=m\cdot(f\circ f')$ hold whenever they make sense.

\subsection{Representable profunctors}
  \label{sec:representable_profunctors}

Profunctors also act as generalized functors, just like relations $R\subseteq A\times B$ act as
generalized functions $A\to B$. Any functor $F\colon\cat{C}\to\cat{D}$ induces profunctors
$\cat{D}(F,-)\colon\cat{C}\tickar\cat{D}$ and $\cat{D}(-,F)\colon\cat{D}\tickar\cat{C}$, called the
profunctors \emph{represented} by $F$. These profunctors are defined by
\begin{equation}
    \label{repprofs}
  \cat{D}(F,-)(c,d)\coloneqq\cat{D}(Fc,d) \qquad \cat{D}(-,F)(d,c)\coloneqq\cat{D}(d,Fc).
\end{equation}

\subsection{Tensor product of profunctors}
  \label{ssec:profunctor_tensor}
Given two profunctors
\[
  \begin{tikzcd}
    \cat{C} \ar[r,tick,"M"] & \cat{D} \ar[r,tick,"N"] & \cat{E}
  \end{tikzcd}
\]
there is a tensor product $M\odot N\colon\cat{C}\tickar\cat{E}$, given by the coend formula
\begin{equation}
    \label{eqn:coendComp}
  (M\odot N)(c,e) = \int^{d\in\cat{D}}M(c,d)\times N(d,e).
\end{equation}
Following \cref{ssec:profunctor_matrix}, this is analogous to matrix multiplication:
$(AB)_{i,k}=\sum_jA_{i,j}B_{j,k}$. Equivalently, $(M\odot N)(c,e)$ is the coequalizer of the
diagram
\begin{equation}
    \label{eqn:coeqComp}
  \begin{tikzcd}
    \displaystyle\coprod_{d_1,d_2\in\cat{D}} M(c,d_1)\times\cat{D}(d_1,d_2)\times N(d_2,e)
    \ar[r,shift left] \ar[r,shift right]
    & \displaystyle\coprod_{d\in\cat{D}} M(c,d)\times N(d,e)
  \end{tikzcd}
\end{equation}
where the two maps are given by the right action of $\cat{D}$ on $M$ and by the left action of
$\cat{D}$ on $N$. In the notation of \cref{ssec:profunctor_bimodule}, we can write elements of
$(M\odot N)(c,e)$ as tensors $m\otimes n$, where $m\in M(c,d)$ and $n\in N(d,e)$ for some
$d\in\cat{D}$. The coequalizer then implies that $(m\cdot f)\otimes n=m\otimes(f\cdot n)$ whenever
the equation makes sense. Notice the similarity to the tensor product of bimodules over rings.

Alternatively, we can define the tensor product by the composition
\begin{equation*}
  M\odot N = \op{\cat{C}}\xrightarrow{\;M\;}\Set^{\cat{D}}\xrightarrow{\;\Lambda_N\;}\Set^{\cat{E}},
\end{equation*}
or by the composition $\Lambda'_N\circ M\colon\cat{C}\to\Set^{\op{\cat{E}}}$. This is
clearly equivalent to \cref{eqn:coendComp}, using \cref{eqn:LambdaPointwise}.

For any category $\cat{C}$, there is a profunctor
$\Hom_{\cat{C}}\colon\op{\cat{C}}\times\cat{C}\to\Set$, which we will often write as
$\cat{C}=\Hom_{\cat{C}}$ when unambiguous. For any functors $F\colon\cat{C}\to\Set$ and
$G\colon\op{\cat{C}}\to\Set$, there are natural isomorphisms
\begin{equation}
    \label{eq:coYoneda}
  \int^{c\in\cat{C}}F(c)\times\cat{C}(c,c')\iso F(c')
  \qquad
  \int^{c\in\cat{C}}\cat{C}(c',c)\times G(c) \iso G(c'),
\end{equation}
a result sometimes referred to as the coYoneda lemma \cite[(3.71)]{Kelly:1982a}. Continuing with the
analogy from \cref{ssec:profunctor_matrix}, $\Hom_{\cat{C}}$ acts like an identity matrix:
$\sum_i\delta_{i,j}v_i=v_j$. That is, these hom profunctors act as units for the tensor product,
since \cref{eq:coYoneda} shows that $\Hom_{\cat{C}}\odot M \iso M \iso M\odot\Hom_{\cat{D}}$.
Following \cref{ssec:profunctor_bimodule}, one can think of $\Hom_{\cat{C}}$ as the regular
$(\cat{C},\cat{C})$-bimodule, i.e.\ as $\cat{C}$ acting on itself on both sides
\cite{Matsumura:1989a}.

\subsection{Profunctor morphisms}
  \label{prof_morphisms}

A morphism $\phi\colon M\Rightarrow N$ between two profunctors
\[
  \begin{tikzcd}
    \cat{C} \ar[r,tick,shift left,"M"] \ar[r,tick,shift right,"N"'] & \cat{D},
  \end{tikzcd}
\]
is defined to be a natural transformation between the set-valued functors. In other words, for each
$c\in\cat{C}$ and $d\in\cat{D}$ there is a component function $\phi_{c,d}\colon M(c,d)\to N(c,d)$
such that the equation $\phi(f\cdot m \cdot g)=f\cdot\phi(m)\cdot g$ holds whenever it makes sense.

Categories, profunctors, and profunctor morphisms form a bicategory $\Prof$. To explain how functors
fit in, we need to discuss proarrow equipments.

\subsection{Proarrow equipments}
  \label{sec:equipments}

Before going into more properties of profunctors, it will be useful to put them in a more general
and abstract framework. A \emph{double category} is a 2-category-like structure involving two types
of 1-cell \dash horizontal and vertical \dash as well as 2-cells. A \emph{proarrow equipment} (which we
typically abbreviate to just \emph{equipment}) is a double category satisfying a certain fibrancy
condition. An excellent reference is the paper
\cite{Shulman:2008a}, where they are called \emph{framed bicategories}.

We will see in \cref{ex:prof_double_cat} that there is an equipment $\dProf$ whose objects are
categories, whose vertical 1-cells are functors, and whose horizontal 1-cells are profunctors. This is
the motivating example to keep in mind for equipments. In \cref{ThedoublecategoryData} we will
define $\Data$, the other main proarrow equipment of the paper, whose objects are database schemas.

\begin{definition}
    \label{def:double_cat}
  A \emph{double category} $\dcat{D}$ consists of the following data:
  \begin{itemize}[nosep]
  \item A category $\dcat{D}_0$, which we refer to as the \emph{vertical category} of $\dcat{D}$.
    For any two objects $A,B\in\dcat{D}_0$, we will write $\dcat{D}_0(A,B)$ for the set of vertical
    arrows from $A$ to $B$. We refer to objects of $\dcat{D}_0$ as objects of $\dcat{D}$.
  \item A category $\dcat{D}_1$, equipped with two functors
    $\lframe,\rframe\colon\dcat{D}_1\to\dcat{D}_0$, called the \emph{left frame} and \emph{right
    frame} functors. Given an object $M\in\Ob\dcat{D}_1$ with $A=\lframe(M)$ and $B=\rframe(M)$, we
    say that $M$ is a \emph{proarrow} (or \emph{horizontal arrow}) \emph{from $A$ to $B$} and write
    $M\colon A\tickar B$. A morphism $\phi\colon M\to N$ in $\dcat{D}_1$ is called a 2-cell, and is
    drawn as follows, where $f=\lframe(\phi)$ and $g=\rframe(\phi)$:
    \begin{equation} \begin{tikzcd}
        \label{eqn:2cell}
      A \ar[r,tick,"M" domA] \ar[d,"f"']
      & B\ar[d,"{g}"]
      \\
      C \ar[r,tick,"N"' codA]
      & D
      \twocellA{\phi}
    \end{tikzcd} \end{equation}
  \item A \emph{unit} functor $\unit\colon\dcat{D}_0\to\dcat{D}_1$, which is a section of both
    $L$ and $R$, i.e.\ $\lframe\circ\unit=\id_{\dcat{D}_0}=\rframe\circ \unit$. We will often write
    $\unit_A$ or even $A$ for the unit proarrow, $\unit(A)\colon A\tickar A$, and similarly
    $\unit_f$ of just $f$ for $\unit(f)$.
  \item A functor $\odot\colon\dcat{D}_1\times_{\dcat{D}_0}\dcat{D}_1\to\dcat{D}_1$, called
    \emph{horizontal composition}, that is weakly associative and weakly unital in the sense that there are
    coherent unitor and associator isomorphisms. See \cite{Shulman:2008a} for details.
  \end{itemize}
  Given a double category $\dcat{D}$, we will sometimes write $\Ver(\dcat{D})$ for the vertical
  category $\dcat{D}_0$. There is also a \emph{horizontal bicategory}, denoted $\HHor(\dcat{D})$,
  whose objects and 1-cells are the objects and horizontal 1-cells of $\dcat{D}$, and whose 2-cells
  are the 2-cells of $\dcat{D}$ of the form \cref{eqn:2cell} such that $f=\id_A$ and $g=\id_{B}$.

  Given $f,g,M,N$ as in \cref{eqn:2cell}, we write $\subsub{\dcat{D}}{f}{g}(M,N)$ for the set of 2-cells from
  $M$ to $N$ with frames $f$ and $g$, and write $\HHor(\dcat{D})(M,N)$ for the case where $f$ and
  $g$ are identity morphisms. If $A$ and $B$ are objects, then $\dcat{D}(A,B)$ will always mean the
  set of \emph{vertical} arrows from $A$ to $B$, where $\HHor(\dcat{D})(A,B)$ is used when we want
  the category of proarrows.
\end{definition}

We follow the convention of writing horizontal composition serially, i.e.\ the horizontal composite
of proarrows $M\colon A\tickar B$ and $N\colon B\tickar C$, is $M\odot N\colon A\tickar C$.

\begin{definition}
    \label{closedequipment}
  A double category $\dcat{D}$ is \emph{right closed} [resp.\ \emph{left closed}] when its
  horizontal bicategory is, i.e.\ when composing a proarrow $N$ [resp.\ $M$] with an arbitrary
  proarrow, $(-\odot N)$, [resp.\ $(M\odot -)$] has a left adjoint. Following \cite{Shulman:2008a},
  we denote this left adjoint by $(N\rhd -)$ [resp.\ by $(-\lhd M)$]; hence there are bijections
  \begin{align*}
    \HHor(\dcat{D})(X\odot N,P) &\cong \HHor(\dcat{D})(X,N\rhd P) \\
    \HHor(\dcat{D})(M\odot X,P) &\cong \HHor(\dcat{D})(X,P\lhd M)
  \end{align*}
  natural in $X$ and $P$. $\dcat{D}$ is \emph{biclosed} when both adjoints exist.
\end{definition}

Recall from
\cite{Borceux:1994a-2} the definitions of cartesian morphisms and fibrations of categories.

\begin{definition}
    \label{def:equipment}
  A \emph{proarrow equipment} (or just \emph{equipment}) is a double category $\dcat{D}$ in which the frame functor
  \[
    (\lframe,\rframe)\colon\dcat{D}_1\to\dcat{D}_0\times\dcat{D}_0
  \]
  is a fibration. If $f\colon A\to C$ and $g\colon B\to D$ are vertical morphisms and $N\colon
  C\tickar D$ is a proarrow, a cartesian morphism $M\to N$ in $\dcat{D}_1$ over $(f,g)$ is a 2-cell
  \[ \begin{tikzcd}
      A \ar[r,tick, "M" domA] \ar[d,"f"']
      & B\ar[d,"{g}"] \\
      C \ar[r,tick,"N"' codA]
      & D
      \twocellA{\tn{cart}}
  \end{tikzcd} \]
  which we call a \emph{cartesian 2-cell}. We refer to $M$ as the \emph{restriction of $N$ along $f$
  and $g$}, written $M=N(f,g)$.

  Equivalently, an equipment is a double category in which every vertical arrow $f\colon A\to B$ has a
  \emph{companion} $\comp{f}\colon A\tickar B$ and a \emph{conjoint} $\conj{f}\colon B\tickar A$,
  together with 2-cells satisfying certain equations (see \cite{Shulman:2008a}). In this view, the
  canonical cartesian lifting of some proarrow $N$ along $(f,g)$ is given by $N(f,g)\cong\comp{f}\odot
  N\odot\conj{g}$.
\end{definition}

\subsection{Adjunction between representable proarrows}
  \label{sec:adjunction_reps}

Any vertical morphism in an equipment $\dcat{D}$ induces an adjunction $\comp{f}\dashv\conj{f}$ in
the horizontal bicategory $\HHor(\dcat{D})$, with unit denoted $\eta_f$ and counit denoted
$\epsilon_f$. Moreover, the following bijective correspondences hold for any vertical morphisms
$f\colon A\to B$, $g\colon C\to D$, and proarrows $M\colon A\tickar B$, $N\colon C\tickar D$:
\begin{equation}
    \label{bijcorrs}
  \begin{aligned}
    \subsub{\dcat{D}}{f}{g}(M,N) &\iso \HHor(\dcat{D})(M, \comp{f}\odot N\odot\conj{g}) \\
                        &\iso \HHor(\dcat{D})(M\odot\comp{g}, \comp{f}\odot N) \\
                        &\iso \HHor(\dcat{D})(\conj{f}\odot M, N\odot\conj{g}) \\
                        &\iso \HHor(\dcat{D})(\conj{f}\odot M\odot\comp{g}, N).
  \end{aligned}
\end{equation}
The last bijection shows that in an equipment, the frame functor
$(\lframe,\rframe)\colon\dcat{D}_1\to\dcat{D}_0\times\dcat{D}_0$ turns out to also be an
opfibration.

We record some notation for \cref{bijcorrs}. Given a $2$-cell $\phi\in\subsub{\dcat{D}}{f}{g}(M,N)$, we write
$\comp{\phi}\in\HHor(\dcat{D})(M\odot\comp{g}, \comp{f}\odot N)$ and
$\conj{\phi}\in\HHor(\dcat{D})(\conj{f}\odot M, N\odot\conj{g})$ for its image under the above
bijections,
\begin{equation*}
  \begin{tikzcd}
    A \ar[r,tick,"M" domA] \ar[d,"f"']
    & B\ar[d,"{g}"]
    \\
    C \ar[r,tick,"N"' codA]
    & D
    \twocellA{\phi}
  \end{tikzcd}
  \qquad\qquad
  \begin{tikzcd}
    A \ar[r,tick,"M"] \ar[d,equal]
    & |[alias=domA]| B\ar[r,tick,"\comp{g}"]
    & D\ar[d,equal]
    \\
    A\ar[r,tick,"\comp{f}"']
    & |[alias=codA]| C \ar[r,tick,"N"']
    & D
    \twocellA{\comp{\phi}}
  \end{tikzcd}
  \qquad\qquad
  \begin{tikzcd}
    C \ar[r,tick,"\conj{f}"] \ar[d,equal]
    & |[alias=domA]| A\ar[r,"M"]
    & B\ar[d,equal]
    \\
    C\ar[r,"N"']
    & |[alias=codA]| D \ar[r,tick,"\conj{g}"']
    & B
    \twocellA{\conj{\phi}}
  \end{tikzcd}
\end{equation*}

\begin{example}
    \label{ex:prof_double_cat}
  There is a double category $\dProf$ defined as follows. The vertical category is $\dProf_0=\Cat$
  the category of small categories and functors. Given objects $\cat{C},\cat{D}\in\dProf$, a
  horizontal arrow between them is a profunctor $M\colon\cat{C}\tickar\cat{D}$, as described in
  \cref{ssec:profunctors}. A 2-cell $\phi\in\subsub{\dProf}{F}{G}(M,N)$, as to the left of
  \cref{eqn:Prof2cells}, denotes a natural transformation, as to the right of \cref{eqn:Prof2cells},
  with components $\phi_{c,d}\colon M(c,d)\to N(Fc,Gd)$:
  \begin{equation}
      \label{eqn:Prof2cells}
    \begin{tikzcd}
      \cat{C} \ar[r,tick,"M" domA] \ar[d,"F"']
      & \cat{D}\ar[d,"{G}"] \\
      \cat{E} \ar[r,tick,"N"' codA]
      & \cat{F}
      \twocellA{\phi}
    \end{tikzcd}
    \hspace{.6in}
    \begin{tikzcd}[column sep=.8em, row sep=5ex]
      \op{\cat{C}}\times\cat{D} \ar[dr,"M"'] \ar[rr,"\op{F}\times G"]
      & \ar[d,phantom,"\overset{\phi}{\Rightarrow}" near start]
      & \op{\cat{E}}\times\cat{F}\ar[dl,"N"] \\
      & \Set
    \end{tikzcd}
  \end{equation}
  The horizontal composite of profunctors $M\odot N$ is defined by the coend \cref{eqn:coendComp},
  or equivalently by the coequalizer \cref{eqn:coeqComp}, and the horizontal unit is
  $\unit_\cat{C}=\Hom_{\cat{C}}\colon\cat{C}\tickar\cat{C}$. This gives $\dProf$ the structure of a
  double category, such that $\HHor(\dProf)$ is the bicategory $\Prof$ defined in
  \cref{prof_morphisms}.

  Moreover, the double category $\dProf$ is biclosed (see \cref{closedequipment}): given proarrows
  $M\colon\cat{C}\tickar\cat{D}$, $N\colon\cat{D}\tickar\cat{E}$, and $P\colon\cat{C}\tickar\cat{E}$,
  one defines left and right exponentiation using ends
  \begin{align*}
    (N\rhd P)(c,d)= & \int_{e\in\cat{E}}P(c,e)^{N(d,e)}=\Fun{\cat{E},\Set}(N(d,-),P(c,-)) \\
    (P\lhd M)(d,e)= & \int_{c\in\cat{C}}P(c,e)^{M(c,d)}=\Fun{\op{\cat{C}},\Set}(M(-,d),P(-,e))
  \end{align*}
  which evidently inherit left and right actions from the respective categories when viewed as
  bimodules.

  Finally, $\dProf$ is an equipment because for any $F,G,N$ as in \cref{eqn:Prof2cells}, there is a
  cartesian 2-cell whose domain is precisely the profunctor $N(F,G)\coloneqq N\circ(\op{F}\times G)$ obtained by
  composition. The companion and conjoint of any functor $F\colon\cat{C}\to\cat{D}$ are the
  representable profunctors \cref{repprofs}
  \[
  \comp{F}=\cat{D}(F,-)\quad\tn{and}\quad\conj{F}=\cat{D}(-,F).
  \]
  Thus we can also represent the cartesian lifting
  as $N(F,G)=\comp{F}\odot N\odot\conj{G}$.
\end{example}

\begin{definition}
    \label{def:local_colimits}
  Let $\cat{I}$ be a small category. We say that a double category $\dcat{D}$ \emph{has local
  colimits of shape $\cat{I}$} if, for each pair of objects $A,B\in\dcat{D}$, the hom-category
  $\HHor(\dcat{D})(A,B)$ has colimits of shape $\cat{I}$ and these are preserved by horizontal
  composition on both sides,
  \begin{align*}
  L\odot\left(\colim_{i\in\cat{I}}M_i\right)\cong & \colim_{i\in\cat{I}}(L\odot M_i) \\
  \left(\colim_{i\in\cat{I}}M_i\right)\odot N\cong & \colim_{i\in\cat{I}}(M_i\odot N).
  \end{align*}
  We say that $\dcat{D}$ \emph{has local colimits} if it has local colimits of shape $\cat{I}$ for
  all small $\cat{I}$.
\end{definition}

\begin{example}
    \label{ex:Prof_local_colims}
  The equipment $\dProf$ has local colimits. Indeed, each horizontal bicategory is a category of
  set-valued functors. Colimits exist, and they are preserved by horizontal composition because
  composition is defined by coends, which are themselves colimits.
\end{example}

\subsection{Collage of a proarrow}
  \label{ssec:collage}
In some equipments $\dcat{D}$, a proarrow can be represented in a certain sense by an object in $\dcat{D}$,
called its collage. For example, it is well known that a profunctor can be represented by a category, as we
review in \cref{ex:prof_collages}. In this section we collect some useful properties of the collage construction,
in an arbitrary equipment.

We note briefly that the collage construction was also studied in \cite{Wood:1985a}, in a slightly
different setting. The definition we give below of an equipment with extensive collages is somewhat
more general than the set of axioms considered in \cite{Wood:1985a}, as we don't require the
existence of Kleisli objects for (horizontal) monads.

\begin{definition}\label{def:collage}
Let $M\colon A\tickar B$ be a proarrow in an equipment $\dcat{D}$. Its \emph{collage} is an object
$\scol{M}$ equipped with vertical arrows $\incnodes{A}\colon A\to\scol{M}\from B\cocolon\incnodes{B}$, called the
\emph{collage inclusions}, together with a 2-cell
\begin{equation}
    \label{eqn:mu}
  \begin{tikzcd}
    A \ar[r,tick,"M" domA] \ar[d,"\incnodes{A}"']
    & B \ar[d,"\incnodes{B}"] \\
    \scol{M} \ar[r,tick,"\scol{M}"' codA] & \scol{M},
    \twocellA{\mu}
  \end{tikzcd}
\end{equation}
that is \emph{universal} in the sense that any diagram as to the left below (a cocone under $M$)
factors uniquely as to the right:
\begin{equation}
    \label{univpropcollages}
  \begin{tikzcd}
    A \ar[r,tick,"M" domA] \ar[d,"f_A"']
    & B \ar[d,"f_B"]
    \\
    X \ar[r,tick,"X"']
    & X
    \twocellA{f}
  \end{tikzcd}
  \quad = \quad
  \begin{tikzcd}
    A \ar[r,tick,"M" domA] \ar[d,"\incnodes{A}"']
    & B \ar[d,"\incnodes{B}"]
    \\
    \scol{M} \ar[r,tick,"\scol{M}"' {domB,codA}] \ar[d,"\bar{f}"']
    & \scol{M} \ar[d,"\bar{f}"]
    \\
    X \ar[r,tick,"X"' codB]
    & X
    \twocellA{\mu}
    \twocellB[pos=.6]{\bar{f}}
  \end{tikzcd}
\end{equation}
\end{definition}

\begin{remark}
    \label{rem:collage_adjoint}
  The existence of a 2-cell $\mu$ with the above universal property amounts to the existence of a
  left adjoint $\scol{(-)}\colon\dcat{D}_1\to\dcat{D}_0$ to the unit functor $\unit$ from
  \cref{def:double_cat}, since it establishes a bijection
  $\dcat{D}_0(\scol{M},X)\cong\dcat{D}_1(M,\unit_X)$. From this perspective, the universal 2-cell
  $\mu:M\Rightarrow\unit_{\scol{M}}$, as in \cref{eqn:mu}, is the unit of the adjunction.
\end{remark}

\begin{definition}
    \label{normalcollages}
  An equipment $\dcat{D}$ is said to \emph{have collages} if every proarrow in $\dcat{D}$ has a
  collage as in \cref{univpropcollages}. By \cref{rem:collage_adjoint}, $\dcat{D}$ has collages if
  and only if there exists a left adjoint $\scol{(-)}\colon\dcat{D}_1\to\dcat{D}_0$ to the unit
  functor $\unit$.

  We say $\dcat{D}$ \emph{has normal collages} if additionally the unit of the adjunction $\mu$ is
  cartesian.
\end{definition}

\begin{example}
    \label{ex:prof_collages}
  The proarrow equipment $\dProf$ has normal collages. The collage $\scol{M}$ of a profunctor
  $M\colon\op{\cat{C}}\times\cat{D}\to\Set$ is a category where
  $\Ob(\scol{M})\coloneqq\Ob(\cat{C})\sqcup\Ob(\cat{D})$, and
  \begin{equation}
      \label{eq:prof_collage}
    \scol{M}(x,y) =
    \begin{cases}
      \cat{C}(x,y) & \text{if $x\in\cat{C}$ and $y\in\cat{C}$} \\
      M(x,y) & \text{if $x\in\cat{C}$ and $y\in\cat{D}$} \\
      \emptyset & \text{if $x\in\cat{D}$ and $y\in\cat{C}$}\\
      \cat{D}(x,y) & \text{if $x\in\cat{D}$ and $y\in\cat{D}$}
    \end{cases}
  \end{equation}
  Composition in $\scol{M}$ is defined using composition in $\cat{C}$ and $\cat{D}$ and the
  functoriality of $M$. There are evident functors $\incnodes{\cat{C}}\colon\cat{C}\to\scol{M}$ and
  $\incnodes{\cat{D}}\colon\cat{D}\to\scol{M}$, and the 2-cell $\mu\colon M\Rightarrow\unit_{\scol{M}}$ sends an element $m\in M(c,d)$ to
  $m\in\scol{M}(\incnodes{\cat{C}}(c),\incnodes{\cat{D}}(d))=M(c,d)$. It is easy to see that $\mu$ is cartesian, 
  so $\dProf$ has normal collages.

  This construction satisfies the universal property \cref{univpropcollages}. Suppose we are given
  $f_{\cat{C}}\colon\cat{C}\to\cat{X}$, $f_{\cat{D}}\colon\cat{D}\to\cat{X}$, and a 2-cell $f$ as in
  \cref{univpropcollages}. It is easy to see that the unique $\bar{f}\colon\scol{M}\to\cat{X}$ (and
  so $\unit_{\bar{f}}\colon\unit_{\scol{M}}\Rightarrow\unit_{\cat{X}}$) that works is defined by
  cases, using $f_{\cat{C}}$ on objects and morphisms in $\cat{C}$, using $f_{\cat{D}}$ on objects
  and morphisms in $\cat{D}$, and using $f$ on morphisms with domain in $\cat{C}$ and codomain in
  $\cat{D}$.

  Note also that for any profunctor $M$ as above, there is an induced functor $\scol{M}\to\ncat{2}$, where
  $\ncat{2}=\{0\to1\}$, sometimes called the \emph{free arrow category}, is the collage of the terminal profunctor
  $\singleton\tickar\singleton$. In fact, if $\Cat/\ncat{2}$ denotes the slice category, it is not hard to
  check that the collage construction provides an equivalence of categories 
  \begin{equation}\label{equiv_prof_cat2}
  \dProf_1\simeq\Cat/\ncat{2}
  \end{equation}
  In particular, from a functor $F\colon\cat{A}\to\ncat{2}$ we obtain a profunctor
  between the pullbacks of $F$ along $0,1\colon\{*\}\to\ncat{2}$ respectively.
\end{example}

\subsection{Proarrows between collages; simplices}
We now want to consider general proarrows $\scol{M}\tickar\scol{N}$ between collages in $\dcat{D}$,
by defining a category of simplices. Although we will only need this in the case $\dcat{D}=\dProf$,
we found the proofs simpler in the general case.

For intuition, consider two profunctors $M\colon\cat{C}_0\tickar\cat{C}_1$ and
$N\colon\cat{D}_0\tickar\cat{D}_1$. A profunctor $X\colon\scol{M}\tickar\scol{N}$ must assign a set
$X(c,d)$ in four different cases: $c$ is an object in either $\cat{C}_0$ or $\cat{C}_1$, and
likewise for $d$. We could try splitting $X$ into four profunctors
$X_{i,j}\colon\cat{C}_i\tickar\cat{D}_j$, but this would not encode all of the functorial actions
needed to recover $X$. For instance, given objects $c\in\cat{C}_0$, $c'\in\cat{C}_1$, and
$d\in\cat{D}_0$, and given an element $x\in X_{1,0}(c',d)$ and a morphism $m\colon c\to c'$ in
$\scol{M}$ (i.e.\ an element $m\in M(c,c')$), there is an element $m\cdot x\in X_{0,0}(c,d)$. The
idea behind the following construction is to encode all of the data of a profunctor $X$ between
collage objects by four profunctors, together with four 2-cells which capture all of those
functorial actions.

\begin{definition}
    \label{def:simplices}
  Let $M\colon A_0\tickar A_1$ and $N\colon B_0\tickar B_1$ be proarrows in $\dcat{D}$. We define an
  $(M,N)$-\emph{simplex} $X$ to be a collection of proarrows $\{X_{0,0},X_{0,1},X_{1,0},X_{1,1}\}$
  \[
    \begin{tikzcd}[row sep=2.6em,column sep=3.6em]
      A_1 \ar[r,tick,"X_{1,0}"] \ar[dr,"X_{1,1}"{pos=.8,inner sep=0ex}]
      & B_0 \ar[d,tick,"N"] \\
      A_0 \ar[ur,tick,crossing over,"X_{0,0}"{pos=.2,inner sep=0ex}]
      \ar[r,tick,"X_{0,1}"'] \ar[u,tick,"M"]
      & B_1
    \end{tikzcd}
  \]
  together with four 2-cells $X_{0,*},X_{1,*},X_{*,0},X_{*,1}$ as in
  \begin{equation*}
    \begin{tikzcd}[row sep={between origins,2em},column sep={between origins,5em}]
      A_1 \ar[dr,tick,"X_{1,k}"] & \\
      & |[alias=codA]| B_k \\
      A_0 \ar[uu,tick,"M" domA] \ar[ur,tick,"X_{0,k}"'] &
      \twocellA[pos=.35]{X_{*,k}}
    \end{tikzcd}
    \qquad
    \begin{tikzcd}[row sep={between origins,2em},column sep={between origins,5em}]
      & B_0 \ar[dd,tick,"N" codA] \\
      |[alias=domA]| A_j \ar[ur,tick,"X_{j,0}"] \ar[dr,tick,"X_{j,1}"'] & \\
      & B_1
      \twocellA[pos=.65]{X_{j,*}}
    \end{tikzcd}
  \end{equation*}
  such that the following equation holds:
  \[
    \begin{tikzcd}[row sep=2.6em,column sep=3.6em]
      |[alias=domA]| A_1 \ar[r,tick,"X_{1,0}"]
      & B_0 \ar[d,tick,"N"] \\
      A_0 \ar[ur,tick,"X_{0,0}"{pos=.25,inner sep=0ex,codA,domB}]
      \ar[r,tick,"X_{0,1}"'] \ar[u,tick,"M"]
      & |[alias=codB]| B_1
      \twocellA[pos=.65]{X_{*,0}}
      \twocellB[pos=.35]{X_{0,*}}
    \end{tikzcd}
    \quad=\quad
    \begin{tikzcd}[row sep=2.6em,column sep=3.6em]
      A_1 \ar[r,tick,"X_{1,0}"] \ar[dr,tick,"X_{1,1}"{pos=.75,inner sep=0ex,codA,domB}]
      & |[alias=domA]| B_0 \ar[d,tick,"N"] \\
      |[alias=codB]| A_0 \ar[r,tick,"X_{0,1}"'] \ar[u,tick,"M"]
      & B_1
      \twocellA[pos=.65]{X_{1,*}}
      \twocellB[pos=.35]{X_{*,1}}
    \end{tikzcd}
  \]
  \end{definition}

A morphism $\alpha\colon X\to Y$ between two $(M,N)$-simplices consists of component 2-cells
$\alpha=(\alpha_{0,0},\alpha_{0,1},\alpha_{1,0},\alpha_{1,1})$, where $\alpha_{j,k}\colon
X_{j,k}\to Y_{j,k}$ satisfy four evident equations. We have thus defined the \emph{category of
$(M,N)$-simplices}, denoted $\Simp{M}{N}$.

Suppose that the equipment $\dcat{D}$ has local initial objects; see \cref{def:local_colimits}.
Then for any proarrow $M\colon A_0\tickar A_1$, there is an $(M,M)$-simplex given by the proarrows
\begin{equation} \label{eq:unit_simplex}
  \begin{tikzcd}[row sep=2.6em,column sep=3.6em]
    A_1 \ar[r,tick,"0"] \ar[dr,"A_1"{pos=.75,inner sep=0ex}]
    & A_0 \ar[d,tick,"M"] \\
    A_0 \ar[ur,tick,crossing over,"A_0"{pos=.25,inner sep=0ex}]
    \ar[r,tick,"M"'] \ar[u,tick,"M"]
    & A_1
  \end{tikzcd}
\end{equation}
together with the evident 2-cells; we call this the \emph{unit simplex} on $M$ and denote it by
$1_M\in\Simp{M}{M}$.

\subsection{The functor \texorpdfstring{$\res{M}{N}$}{S}} \label{res_functor}

There is a functor $\res{M}{N}\colon\HHor(\dcat{D})(\scol{M},\scol{N})\to\Simp{M}{N}$ defined as
follows. On some $P\colon\scol{M}\tickar\scol{N}$, the four proarrows are given by the
restrictions along the collage inclusions $\incnodes{A_j}\colon A_j\to\scol{M}$ and $\incnodes{B_k}\colon
B_k\to\scol{N}$, namely $X_{j,k}=\comp{i}_{A_j}\odot P\odot\conj{i}_{B_k}$, and the 2-cells are
given by horizontal composition with the universal $\mu_M$, $\mu_N$.

The following proposition follows directly from definitions.

\begin{proposition}
    \label{prop:nice_units}
  Suppose that $\dcat{D}$ has local initial objects and collages. The four 2-cells
    \begin{equation}
        \label{nice_unit_condition1}
      \begin{tikzcd}
        A \ar[r,tick,"A" domA] \ar[d,"\incnodes{A}"']
        & A \ar[d,"\incnodes{A}"] \\
        \scol{M} \ar[r,tick,"\scol{M}"' codA]
        & \scol{M}
        \twocellA{\incnodes{A}}
      \end{tikzcd}
      \quad
      \begin{tikzcd}
        A \ar[r,tick,"M" domA] \ar[d,"\incnodes{A}"']
        & B \ar[d,"\incnodes{B}"] \\
        \scol{M} \ar[r,tick,"\scol{M}"' codA]
        & \scol{M}
        \twocellA{\mu}
      \end{tikzcd}
      \quad
      \begin{tikzcd}
        B \ar[r,tick,"0" domA] \ar[d,"\incnodes{B}"']
        & A \ar[d,"\incnodes{A}"] \\
        \scol{M} \ar[r,tick,"\scol{M}"' codA]
        & \scol{M}
        \twocellA{!}
      \end{tikzcd}
      \quad
    \begin{tikzcd}
        B \ar[r,tick,"B" domA] \ar[d,"\incnodes{B}"']
        & B \ar[d,"\incnodes{B}"] \\
        \scol{M} \ar[r,tick,"\scol{M}"' codA]
        & \scol{M}
        \twocellA{\incnodes{B}}
        \end{tikzcd}
    \end{equation}
  induce a morphism $u_M\colon 1_M\to\res{M}{M}(\unit_{\scol{M}})$ in $\Simp{M}{M}$ by unique
  factorization through cartesian 2-cells. The following are equivalent
  \begin{enumerate}
    \item $u_M$ is an isomorphism in $\Simp{M}{M}$.
    \item each of the four squares in \cref{nice_unit_condition1} is cartesian.
    \item the four induced 2-cells are isomorphisms:
    \begin{equation}
        \label{nice_unit_condition}
      \eta_{\incnodes{A}}\colon\unit_A\simrightarrow\comp{i}_A\odot\conj{i}_A,\quad
      \mu\colon M\simrightarrow\comp{i}_A\odot\conj{i}_B,\quad
      !\colon 0\simrightarrow\comp{i}_B\odot\conj{i}_A,\quad
        \eta_{\incnodes{B}}\colon\unit_B\simrightarrow\comp{i}_B\odot\conj{i}_B.
    \end{equation}
  \end{enumerate}
\end{proposition}

Note that if $\dcat{D}$ satisfies the equivalent conditions in \cref{prop:nice_units} then, in
particular, it has normal collages.

\begin{definition}
    \label{def:extensive}
  Let $\dcat{D}$ be an equipment. We will say that $\dcat{D}$ has \emph{extensive collages} if it
  satisfies the following conditions:
  \begin{enumerate}
  \item\labelold[condition]{item:extensive_def_1} $\dcat{D}$ has collages and local initial objects,
  \item\labelold[condition]{item:extensive_def_2} any of the equivalent conditions from
    \cref{prop:nice_units} are satisfied,
  \item\labelold[condition]{item:extensive_def_3} for every pair of proarrows $M$ and $N$, the
    functor $\res{M}{N}\colon\HHor(\dcat{D})(\scol{M},\scol{N})\to\Simp{M}{N}$ is an equivalence of
    categories.
  \end{enumerate}
\end{definition}

Extensive collages are best behaved in the presence of local finite colimits. The following
proposition provides a condition which is equivalent to \cref{item:extensive_def_3} above in this
case, but which is often easier to verify. The proof provides an explicit construction of the
inverse of $\res{M}{N}$ using colimits in the horizontal bicategories.

\begin{proposition}
    \label{prop:inverse_equiv}
  Suppose that $\dcat{D}$ is an equipment with collages, that it satisfies \cref{item:extensive_def_2}
  in \cref{def:extensive}, and that $\dcat{D}$ has local finite colimits (so it also satisfies
  \cref{item:extensive_def_1}). Then \cref{item:extensive_def_3} is equivalent to the following
  condition:
  \begin{enumerate}[label=\arabic*'.,ref=\arabic*',start=3,labelindent=\parindent,leftmargin=*]
  \item for any proarrow $M\colon A\tickar B$, the following square is a pushout in
  $\HHor(\dcat{D})(\scol{M},\scol{M})$:
    \begin{equation}\label{eq:collage_pushout}
      \begin{tikzcd}[column sep=5.5em,row sep=2.2em]
       \conj{i}_A\odot\comp{i}_A\odot\conj{i}_B\odot\comp{i}_B \ar[r,"\epsilon_{\incnodes{A}}\odot\conj{i}_B\odot\comp{i}_B"]
        \ar[d,"\conj{i}_A\odot\comp{i}_A\odot\epsilon_{\incnodes{B}}"']
        & \conj{i}_B\odot\comp{i}_B \ar[d,"\epsilon_{\incnodes{B}}"] \\
        \conj{i}_A\odot\comp{i}_A \ar[r,"\epsilon_{\incnodes{A}}"']
        & \unit_{\scol{M}}\ar[ul,phantom,very near start,"\ulcorner"]
      \end{tikzcd}
    \end{equation}
    \labelold[condition]{item:extensive_def_3_alt}
  \end{enumerate}
\end{proposition}
\begin{proof}
  Suppose $\dcat{D}$ has local finite colimits and satisfies \cref{item:extensive_def_2}. First
  assuming \cref{item:extensive_def_3} we will show that \cref{eq:collage_pushout} is a pushout. It
  suffices that its image under the equivalence $\res{M}{N}$ (\cref{res_functor}) is a pushout, i.e.\
  each of the four restriction functors,
  \[\comp{i}_A\odot\textrm{--}\odot\conj{i}_A\colon\HHor(\dcat{D})(\scol{M},\scol{M})\to\HHor(\dcat{D})(A,A),\]
  as well as $\comp{i}_A\odot\textrm{--}\odot\conj{i}_B$, $\comp{i}_B\odot\textrm{--}\odot\conj{i}_A$,
  and $\comp{i}_B\odot\textrm{--}\odot\conj{i}_B$, take the diagram
  \cref{eq:collage_pushout} to a pushout square. This follows easily from
  \cref{item:extensive_def_2}, in particular the four isomorphisms of \cref{nice_unit_condition}.

  Conversely, assuming \cref{item:extensive_def_3_alt}, we will show that $\res{M}{N}$ is an
  equivalence of categories for any pair of proarrows $M\colon A_0\tickar A_1$, $N\colon B_0\tickar
  B_1$. To define the inverse functor, let $X\in\Simp{M}{N}$ be a simplex, and consider the diagram
  \[
    \begin{tikzcd}[row sep=1em,column sep=2em]
      &A_1 \ar[rr,tick,"X_{1,0}"] \ar[ddrr,"X_{1,1}"{pos=.8,inner sep=0ex}]
      && B_0 \ar[dd,tick,"N"]\ar[rd,tick,"\comp{i}_{B_0}"] \\
      \scol{M}\ar[ru,tick,"\conj{i}_{A_1}"]\ar[rd,tick,"\conj{i}_{A_0}"']&&&&\scol{N}\\
      &A_0 \ar[uurr,tick,crossing over,"X_{0,0}"{pos=.2,inner sep=0ex}]
      \ar[rr,tick,"X_{0,1}"'] \ar[uu,tick,"M"]
      && B_1\ar[ru,tick,"\comp{i}_{B_1}"']
    \end{tikzcd}
  \]
  which also contains six 2-cells:
  \begin{gather*}
  X_{*,k}\colon M\odot X_{1,k}\to X_{0,k},\quad
  X_{j,*}\colon X_{j,0}\odot N\to X_{j,1},\\
  \conj{\mu}_M\colon\conj{i}_{A_0}\odot M\to\conj{i}_{A_1},\quad
  \comp{\mu}_N\colon N\odot\comp{i}_{B_1}\to\comp{i}_{B_0}
  \end{gather*}
  where the $\mu$'s are universal 2-cells and $\comp{\mu}$ and $\conj{\mu}$ are as in
  \cref{sec:adjunction_reps}.

  The inverse to $\res{M}{N}$, which we denote
  $(X\mapsto\bcol{X})\colon\Simp{M}{N}\to\HHor(\dcat{D})(\scol{M},\scol{N})$, is given by sending the
  simplex $X$ to the colimit in $\HHor(\dcat{D})(\scol{M},\scol{N})$
  of the $3\times 3$ square: 
  \footnote{We suppress the $\odot$ symbol in the objects to reduce the required space.}
  \begin{equation}
      \label{eq:simplex_collage}
    \begin{tikzcd}
      \conj{i}_{A_0}X_{0,0}\comp{i}_{B_0}
      & \conj{i}_{A_0}MX_{1,0}\comp{i}_{B_0} \ar[l,"X_{*,0}"'] \ar[r,"\conj{\mu}_M"]
      & \conj{i}_{A_1}X_{1,0}\comp{i}_{B_0}
      & P\conj{i}_{B_0}\comp{i}_{B_0} \\
      \conj{i}_{A_0}X_{0,0}N\comp{i}_{B_1} \ar[u,"\comp{\mu}_N"] \ar[d,"X_{0,*}"']
      & \conj{i}_{A_0}MX_{1,0}N\comp{i}_{B_1} \ar[u,"\comp{\mu}_N"']
      \ar[d,"X_{1,*}"] \ar[l,"X_{*,0}"'] \ar[r,"\conj{\mu}_M"]
      & \conj{i}_{A_1}X_{1,0}N\comp{i}_{B_1} \ar[u,"\comp{\mu}_N"'] \ar[d,"X_{1,*}"]
      & P\conj{i}_{B_0}N\comp{i}_{B_1} \ar[u,"\comp{\mu}_N"']
      \ar[d,"\conj{\mu}_N"] \\
      \conj{i}_{A_0}X_{0,1}\comp{i}_{B_1}
      & \conj{i}_{A_0}MX_{1,1}\comp{i}_{B_1} \ar[l,"X_{*,1}"] \ar[r,"\conj{\mu}_M"']
      & \conj{i}_{A_1}X_{1,1}\comp{i}_{B_1}
      & P\conj{i}_{B_1}\comp{i}_{B_1} \\
      \conj{i}_{A_0}\comp{i}_{A_0}P
      & \conj{i}_{A_0}M\comp{i}_{A_1}P \ar[l,"\comp{\mu}_M"]
      \ar[r,"\conj{\mu}_M"']
      & \conj{i}_{A_1}\comp{i}_{A_1}P
      & P
    \end{tikzcd}
  \end{equation}
  Note that this colimit can be formed by first taking the pushout of each row, and then taking the
  pushout of the resulting span, or by taking column-wise pushouts first. For the time being, ignore
  the separated right-hand column and bottom row of \cref{eq:simplex_collage}.

  We now show that $\res{M}{N}$ and $X\mapsto\bcol{X}$ are inverse equivalences. Suppose
  $P\colon\scol{M}\tickar\scol{N}$ is a proarrow and let $X=\res{M}{N}(P)$; we want to show that
  there is a natural isomorphism $P\cong\bcol{X}$. Performing the substitution
  $X_{j,k}=\comp{i}_{A_j}P\conj{i}_{B_k}$ and using the isomorphisms from
  \cref{nice_unit_condition}, e.g.\ $M\iso\comp{i}_{A_0}\conj{i}_{A_1}$, each row (resp.\ each
  column) can be seen as a composition of some proarrow \dash namely the one in the right-hand column
  (resp.\ bottom row) \dash with the diagram \cref{eq:collage_pushout}. Since local colimits commute
  with proarrow composition, the right-hand column (resp.\ bottom row) proarrows are indeed the pushouts. In
  the same way, one checks that $P$ is the colimit of both the right-hand column and the bottom row.

  In the other direction, if $X\in\Simp{M}{N}$ is any simplex and $\bcol{X}$ is the colimit of the
  square in \cref{eq:simplex_collage}, we want to show that $\res{M}{N}(\bcol{X})\iso X$. It is
  straightforward to check that $\comp{i}_{A_j}\odot\bcol{X}\odot\conj{i}_{B_k}\iso X_{j,k}$ by composing the
  square with $\comp{i}_{A_j}$ on the left and $\conj{i}_{B_k}$ on the right and applying the
  equations of \cref{nice_unit_condition}. It is moreover easy to see that these isomorphisms form
  the components of an isomorphism of simplices $\res{M}{N}(\bcol{X})\iso X$. Thus $\res{M}{N}$ is
  an equivalence of categories.
\end{proof}

\begin{remark}
  It is likely possible to characterize equipments with extensive collages (assuming local finite
  colimts) in terms of an adjunction of double categories. We won't pursue this further here, but
  for the interested reader we provide a rough sketch as a starting point for further investigation.

  If $\dcat{D}$ is an equipment with local finite colimts, one can define an equipment
  $\ndcat{S}{imp}(\dcat{D})$ whose vertical category is $\dcat{D}_1$ and whose horizontal 1-cells
  are simplices. The composition in $\ndcat{S}{imp}(\dcat{D})$ is given by
  \cref{eq:simplex_composition}. There is a double functor
  $\unit\colon\dcat{D}\to\ndcat{S}{imp}(\dcat{D})$ sending each object $A\in\dcat{D}$ to the unit
  proarrow $\unit_A$ and each proarrow $M\colon A\tickar B$ to the unit simplex $1_M$ defined in
  \cref{eq:unit_simplex}.

  If $\dcat{D}$ has extensive collages, then $\unit$ has a left adjoint $\nfun{Col}$ sending each
  proarrow $M\in\ndcat{S}{imp}(\dcat{D})$ to its collage $\Col{M}$ and acting on simplices by the
  pushout \cref{eq:simplex_collage}. Looking at the definition \cref{def:extensive}, it seems that
  \cref{item:extensive_def_1} is related to the existence of a left adjoint to $\unit$,
  \cref{item:extensive_def_2} is related to the property that the 2-cell components of the unit of
  this double-adjunction are cartesian, and \cref{item:extensive_def_3_alt} is related to the property
  that the right adjoint $\nfun{Col}$ is normal (preserves unit proarrows). Perhaps this observation
  can be worked into an equivalent charaterization of equipments with extensive collages, but we
  leave it to the motivated reader to investigate further.
\end{remark}

\begin{example}
    \label{Profextensivecollages}
  The equipment $\dProf$ has extensive collages. Indeed, $\dProf$ has local colimits by
  \cref{ex:Prof_local_colims} and normal collages by \cref{ex:prof_collages}.
  Moreover, we will verify that $\dProf$ satisfies \cref{item:extensive_def_3_alt} of
  \cref{prop:inverse_equiv}.

  If $M\colon\cat{C}\tickar\cat{D}$ is a profunctor, then we need to show that
  \cref{eq:collage_pushout} is a pushout in the category $\Fun{\op{\scol{M}}\times\scol{M},\Set}$.
  It suffices to show that it is a pointwise pushout. For any objects $x,y\in\scol{M}$, it is not
  hard to see that \cref{eq:collage_pushout} becomes one of the following pushout squares in $\Set$:
  \begin{equation*}
    \begin{tikzcd}[row sep=small,column sep=small]
      & {} \ar[r,phantom,"y\in\cat{C}"] & {}
      & {} \ar[r,phantom,"y\in\cat{D}"] & {} \\
      {} \ar[d,phantom,"x\in\cat{C}"] &
      0 \ar[r] \ar[d] \ar[dr,phantom,"\ulcorner"pos=1] & 0 \ar[d]
      & M(x,y) \ar[r] \ar[d] \ar[dr,phantom,"\ulcorner"pos=1] & M(x,y) \ar[d] \\
      {} &
      \cat{C}(x,y) \ar[r] & \cat{C}(x,y)
      & M(x,y) \ar[r] & M(x,y) \\
      {} \ar[d,phantom,"x\in\cat{D}"] &
      0 \ar[r] \ar[d] \ar[dr,phantom,"\ulcorner"pos=.86] & 0 \ar[d]
      & 0 \ar[r] \ar[d] \ar[dr,phantom,"\ulcorner"pos=1] & \cat{D}(x,y) \ar[d] \\
      {} &
      0 \ar[r] & 0
      & 0 \ar[r] & \cat{D}(x,y)
    \end{tikzcd}
  \end{equation*}
\end{example}

\subsection{Collages as lax (co)limits}

When an equipment $\dcat{D}$ has extensive collages and local finite colimits
(like $\dProf$), there is another
universal property involving collages, which can be
expressed entirely in terms of the horizontal bicategory $\HHor(\dcat{D})$.

\begin{definition}
  Let $\ccat{B}$ be a bicategory, let $F\colon A\to B$ be a 1-cell in $\ccat{B}$, and let $X$ be an
  object in $\ccat{B}$. Define a category of \emph{lax cocones from $F$ to $X$}, written
  $\Cocone{F}{X}$, as follows: an object of $\Cocone{F}{X}$ is a diagram
  \[
  \begin{tikzcd}[row sep=tiny]
    B \ar[dr,"P_B"] & \\
                    & |[alias=codA]| X \\
    A \ar[uu,"F"domA] \ar[ur,"P_A"'] &
    \twocellA[pos=.35]{\pi}
  \end{tikzcd}
  \]
  and a morphism $\alpha\colon(P_A,P_B,\pi)\to(Q_A,Q_B,\chi)$ is a pair of 2-cells $\alpha_A\colon P_A\to
  Q_A$ and $\alpha_B\colon P_B\to Q_B$ making an evident diagram commute.

  Any cocone $(P_A,P_B,\pi)\in\Cocone{F}{X}$ induces a functor $\ccat{B}(X,Y)\to\Cocone{F}{Y}$ by
  composition. If this functor is an equivalence of categories, then we say that $X$ is a \emph{lax colimit} of
  the arrow $F$ (see for example \cite{Kelly:1989a}). Dually,
  there is a category $\Cone{X}{F}$ of \emph{lax cones from $X$ to $F$}, employed in the definition of \emph{lax limits}
  of arrows.
\end{definition}

\begin{proposition}
    \label{prop:collage_lax_limit}
  Let $\dcat{D}$ be an equipment with extensive collages and local finite colimits, and let
  $M\colon A\tickar B$ be a proarrow with collage $\incnodes{A}\colon A\to\scol{M}\from B\cocolon \incnodes{B}$.
  The triangle on the left exhibits $\scol{M}$ as a lax colimit of the 1-cell
  $M$ in $\HHor(\dcat{D})$, and the triangle on the right exhibits $\scol{M}$ as a lax limit of $M$.
  \[
  \begin{tikzcd}[row sep=tiny]
    B \ar[dr,tick,"\comp{i}_B"] & \\
    & |[alias=codA]| \scol{M} \\
    A \ar[uu,tick,"M"domA] \ar[ur,tick,"\comp{i}_A"'] &
    \twocellA[pos=.35]{\comp{\mu}}
  \end{tikzcd}
  \qquad
  \begin{tikzcd}[row sep=tiny]
    & A \ar[dd,tick,"M"codA] \\
    |[alias=domA]| \scol{M} \ar[ur,tick,"\conj{i}_A"] \ar[dr,tick,"\conj{i}_B"'] & \\
    & B
    \twocellA[pos=.65]{\conj{\mu}}
  \end{tikzcd}
  \]
\end{proposition}
\begin{proof}
  The 2-cells $\comp{\mu}$, $\conj{\mu}$ correspond to the cartesian $\mu$ as in
  \cref{sec:adjunction_reps}. We will show that the triangle on the left is a lax colimit cocone,
  i.e.\ that composing with $\comp{\mu}$ induces an equivalence of categories
  $\HHor(\dcat{D})(\scol{M},Y)\to\Cocone{M}{Y}$ for any $Y$.
  We define the inverse functor to send a cocone $(P_A,P_B,\pi)$ to the proarrow
  $P\colon\scol{M}\tickar Y$ defined by a pushout in $\HHor(\dcat{D})(\scol{M},Y)$:
  \begin{equation}
    \label{eq:cocone_pushout}
    \begin{tikzcd}
      \conj{i}_A\odot M\odot P_B \ar[r,"\conj{\mu}\odot P_B"] \ar[d,"\conj{i}_A\odot\pi"']
      \ar[rd,phantom,"\ulcorner" very near end]
      & \conj{i}_B\odot P_B \ar[d] \\
      \conj{i}_A\odot P_A \ar[r] & P
    \end{tikzcd}
  \end{equation}
  Suppose we start with an arbitrary proarrow $Q\colon\scol{M}\tickar Y$, and compose with
  $\comp{\mu}$ to get the cocone $\pi=\comp{\mu}\odot Q\colon M\odot\comp{i}_B\odot Q\to\comp{i}_A\odot Q$. We can
  see that the pushout \cref{eq:cocone_pushout} is just \cref{eq:collage_pushout} composed by $Q$ on
  the right, showing $P\iso Q$. On the other hand, if we start with an arbitrary cocone $\pi$,
  take the pushout $P$ as in \cref{eq:cocone_pushout}, then compose on the left with 
  $\comp{\mu}\colon M\odot\comp{i}_B\to\comp{i}_A$, it is easy to check that we get $\pi$ back.

  Thus the pushout \cref{eq:cocone_pushout} does define an inverse
  functor $\Cocone{M}{Y}\to\HHor(\dcat{D})(\scol{M},Y)$, showing that the triangle on the left is a
  lax colimit cocone.  The lax limit cone follows by a dual argument.
\end{proof}

\begin{remark}
  A converse to \cref{prop:collage_lax_limit} holds: if $\dcat{D}$ has local finite
  colimits such that the conclusion to \cref{prop:collage_lax_limit} holds for all
  proarrows $M\colon A\tickar B$ in $\dcat{D}$, then $\dcat{D}$ has extensive collages. We won't
  need this converse, and so do not prove it. The proof is straightforward, regarding a simplex as a
  ``lax cocone of lax cones'' (or visa-versa).
\end{remark}

\begin{remark}
    \label{proarrowsinoutcollage}
  For convenience, we will break down the universal property of $\mcol{M}$
  as the lax limit of $M$. Suppose $\dcat{D}$ has extensive collages.

  Given any $P_A\colon X\tickar A$, $P_B\colon X\tickar B$, and 2-cell
  $\pi\colon P_A\odot M\to P_B$, there is a proarrow $P\colon X\tickar\mcol{M}$ (which is unique
  up to isomorphism by the
  2-dimensional part of the universal property of \cref{prop:collage_lax_limit}) such that
  $\pi\cong P\odot\conj{\mu}$. Namely cartesian 2-cells exist, by $P_A\cong P\odot\conj{i}_A$, $P_B\cong
  P\odot\conj{i}_B$, satisfying the equation (where $\mu$ is also cartesian)
  \begin{equation}\label{1-dim_univ_prop}
    \begin{tikzcd}
      X \ar[r,tick,"P_A" domA] \ar[d,equal]
      & A \ar[r,tick,"M" domB] \ar[d,"\incnodes{A}"]
      & B \ar[d,"\incnodes{B}"] \\
      X \ar[r,tick,"P"' codA]
      & \scol{M} \ar[r,tick,"\scol{M}"' codB]
      & \scol{M}
      \twocellalt{A}{\mathrm{cart}}
      \twocellB{\mu}
    \end{tikzcd}
    \quad = \quad
    \begin{tikzcd}[row sep={}]
      & |[alias=domA]| A \ar[dr,bend left=15,tick,"M"] & \\[between origins,1.5em]
      X \ar[ur,bend left=15,tick,"P_A"]
      \ar[rr,bend right=15,tick,"P_B"' {codA,domB}]
      \ar[d,equal]
      && B \ar[d,"\incnodes{B}"] \\[1.8em]
      X \ar[rr,tick,"P"' codB]
      && \scol{M}
      \twocellA{\pi}
      \twocellalt{B}{\mathrm{cart}}
    \end{tikzcd}
  \end{equation}
  The 2-dimensional part of the universal property says that, given $\alpha_A\colon p_A\to q_A$ and
  $\alpha_B\colon p_B\to q_B$ such that $\alpha_B\circ p=q\circ\alpha_A$, there is a unique
  $\alpha\colon P\to Q$ making the evident diagrams commute.
  
  The universal property for the lax colimit is dual.
\end{remark}

\section{Algebraic theories}
    \label{sec:algebraic_theories}

In this section, we recall some basic aspects of the well-known work on algebraic theories and their
algebras \cite{Adamek.Rosicky.Vitale:2011a} relevant to our purposes.
In particular, algebraic theories are often used to define data types within various
programming languages \cite{Mitchell:1996a}, and as stated in the introduction, our main goal is to 
connect databases and programming languages.

\begin{definition}
    \label{def:alg_theory}
  A (\emph{multisorted}) \emph{algebraic theory} is a cartesian strict monoidal category $\T[1]$
  together with a set $S_{\T}$, elements of which are called \emph{base sorts}, such that the monoid
  of objects of $\T[1]$ is free on $S_{\T}$. The terminal object in $\T[1]$ is denoted $\One$.

  The category $\ATh$ has algebraic theories as objects, and morphisms $\T\to\T[0]'$ are product
  preserving functors $F$ which send base sorts to base sorts: for any $s\in S_{\T}$, $F(s)\in
  S_{\T[0]'}$.
\end{definition}


\begin{remark}
  Throughout this paper we will discuss algebraic theories---categories with finite products and
  functors that preserve them---which are closely related to the notion of finite product
  sketches; see \cite{Barr.Wells:1985a}. However, aside from issues of syntax and computation,
  everything we say in this paper would also hold if algebraic theories were replaced by
  \emph{essentially algebraic theories}---categories with finite limits and functors that
  preserve them---which are analogous to finite limit sketches.
\end{remark}

\begin{definition}
    \label{def:algebra}
  Let $\T[1]$ be an algebraic theory. An \emph{algebra} (sometimes called a \emph{model}) of $\T[1]$
  is a finite product-preserving functor $\T\to\Set$. The category $\T\alg$ of $\T$-algebras is the
  full subcategory of $\Fun{\T,\Set}$ spanned by the finite product-preserving functors.
\end{definition}

\begin{example}
    \label{ex:representable_algebras}
  If $\T[1]$ is an algebraic theory, and $t\in\T[1]$ is an object, then the representable
  functor $\T[2](t,-)$ preserves finite products. Thus the Yoneda embedding
  $\yoneda\colon\op{\T[0]} \to\Fun{\T,\Set}$ factors through $\T\alg$.

  In particular, $\yoneda(\One)=\T[2](\One,-)$ is the initial $\T$-algebra for any algebraic theory,
  called the \emph{algebra of constants} and denoted by $\kappa\coloneqq\yoneda(\One)$.
\end{example}

We state the following theorem for future reference; proofs can be found in
\cite{Adamek.Rosicky:1994a}.

\begin{theorem}
    \label{thm:colimits_alg_theory}
  Let $\T$ be any algebraic theory.
  \begin{itemize}
    \item The Yoneda embedding $\yoneda\colon\op{\T[0]}\to\T\alg$ is dense. (By definition, $\T\alg$
      is a full subcategory of $\Fun{\T,\Set}$.)
    \item $\T\alg$ is closed in $\Fun{\T,\Set}$ under sifted colimits.
      (\cite[Prop.~2.5]{Adamek.Rosicky:1994a}.)
    \item $\T\alg$ has all colimits. (\cite[Thm.~4.5]{Adamek.Rosicky:1994a}.)
  \end{itemize}
\end{theorem}

\begin{warning}
    \label{wrn:TAlg_colims}
  Note that the forgetful functor $\T\alg\to\Fun{\T,\Set}$ in general does \emph{not} preserve
  colimits; i.e.\ colimits in $\T\alg$ are not taken pointwise. However, see
  \cref{rmk:SInst_colims}.
\end{warning}

\begin{remark}
    \label{rmk:density}
  For convenience, we will recall the notion of a dense functor, though we only use it in the case
  of the inclusion of a full subcategory.  A functor $F\colon\cat{A}\to\cat{C}$ is \emph{dense}
  if one of the following equivalent conditions holds:
  \begin{itemize}[nosep]
    \item for any object $C\in\cat{C}$, the canonical cocone from the canonical diagram
      $(F\downarrow C)\to\cat{C}$ to $C$ is a colimit cocone,
    \item the identity functor $\id_{\cat{C}}$ is the pointwise left Kan extension of $F$ along itself,
    \item the representable functor $\cat{C}(F,-)\colon\cat{C}\to[\op{\cat{A}},\Set]$ is fully faithful,
    \item (assuming $\cat{C}$ is cocomplete) for any object $C\in\cat{C}$, the canonical morphism
      $\int^{A\in\cat{A}}\cat{C}(F(A),C)\cdot F(A) \to C$ is an isomorphism.
  \end{itemize}
\end{remark}

\subsection{Algebraic profunctors}
In the previous section, we recalled the basic elements of the theory of profunctors
(see
\cref{ssec:profunctors,ssec:profunctor_matrix,ssec:profunctor_bimodule,sec:representable_profunctors,ssec:profunctor_tensor,prof_morphisms}).
At this point, we wish to characterize
those profunctors between a category and
an algebraic theory $M\colon\cat{C}\tickar\T$,
which interact nicely with the products in $\T$.

The following equivalences are easy to establish, by
translating a product-preserving condition for
$M\colon\op{\cat{C}}\times\T\to\Set$ under
$\left(-\times\cat{A}\right)\dashv(-)^\cat{A}$, and by \cref{eq:prof_collage}
for the collage construction in $\dProf$.

\begin{lemma}
    \label{lem:profunctor_products}
  Let $\cat{C}$ be a category and $\T$ an algebraic theory.
  For any profunctor $M\colon\cat{C}\tickar\T$, the following are equivalent:
  \begin{itemize}
  \item for each $c\in\cat{C}$, the functor $M(c,\text{--})\colon\T\to\Set$ preserves finite
  products,
  \item $M\colon \T\to\Set^{\op{\cat{C}}}$ preserves finite products,
  \item $M\colon \op{\cat{C}}\to\Set^{\T}$ factors through the full subcategory $\T\alg$,
  \item the inclusion $i_{\T}\colon\T\to\scol{M}$ into the collage of $M$ preserves finite
  products.
  \end{itemize}
\end{lemma}

\begin{definition}
    \label{def:profunctor_products}
  We refer to a profunctor $M$ satisfying any of the equivalent conditions of
  \cref{lem:profunctor_products} as an \emph{algebraic profunctor}, or we say that it
  \emph{preserves products on the right}. We denote a profunctor $M\colon\cat{C}\tickar\T$
  which is algebraic, using a differently-decorated arrow
  \[M\colon\cat{C}\tickxar\T.\]
  We define the category $\ProfTimes$ to be the full subcategory of the pullback
  \[
    \begin{tikzcd}
      \ProfTimes\ar[r,hook]&\cdot \ar[r] \ar[d] \ar[dr,phantom,"\lrcorner" pos=.1]
      & \dProf_1 \ar[d,"{(\lframe,\rframe)}"] \\
      &\Cat\times\ATh \ar[r]
      & \Cat\times\Cat
    \end{tikzcd}
  \]
  spanned by the algebraic profunctors. Here, $\lframe$ and $\rframe$ are the frame functors (\cref{def:double_cat}).
\end{definition}

Suppose given a pair of composable profunctors $\cat{C}\xtickar{M}\cat{D}\xtickxar{N}\T$ in which
the latter is algebraic. We want to compose them in such a way that the composition is also algebraic. It
is not hard to see that ordinary profunctor composition $M\odot N$ does not generally satisfy this property;
however, we can define a composition which does. In \cref{def:otimes} we will formalize
this as a left action $\otimes$ of $\dProf$ on $\ProfTimes$:
\begin{equation}\label{eqn:ProfAction}
  \begin{tikzcd}
    \Cat
    & \ProfTimes \ar[l,"\lframe"'] \ar[r,"\rframe"]
    & \ATh \\
    \dProf_1 \ar[u,"\rframe"] \ar[d,"\lframe"']
    & \cdot \ar[dr,dashed,"\otimes "] \ar[l] \ar[u] \ar[ul,phantom,"\ulcorner" pos=.1] & \\
    \Cat
    && \ProfTimes \ar[ll,"\lframe"] \ar[uu,"\rframe"']
  \end{tikzcd}
\end{equation}
We thus aim to define a functor $\otimes$ (dotted line) from the category
of composable profunctor pairs where the second is algebraic, such that
the above diagram commutes.

Let $\cat{D}$ be a category, $\T$ an algebraic theory, and
$N\colon\cat{D}\tickxar\T$ an algebraic profunctor. By
\cref{lem:profunctor_products}, we can consider $N$ to be a functor
$N\colon\op{\cat{D}}\to\T\alg$. Define the functor
$\LambdaTimes_N\colon\Set^{\cat{D}}\to\T\alg$ by the coend formula
\[
\LambdaTimes_N(J) =\int^{d\in\cat{D}}J(d)\cdot N(d)
\]
taken in the category $\T\alg$. This
coend exists because $\T\alg$ is cocomplete, and the formula coincides with
\cref{eqn:LambdaNonPointwise}, except there the coend is taken in $\Set^{\T}$, thus is pointwise.

\begin{definition}
    \label{def:otimes}
  Let $M\in\dProf_1(\cat{C},\cat{D})$ be a profunctor, and let $N\in\ProfTimes(\cat{D},\T)$ be
  an algebraic profunctor. The \emph{left tensor of $M$ on $N$}, denoted
  $M\otimes N\in\ProfTimes(\cat{C},\T)$ is defined by the composition $\LambdaTimes_N\circ
  M\colon\op{\cat{C}}\to\T\alg$.
\end{definition}

This left tensor can evidently be extended to a functor $\otimes $ as in \cref{eqn:ProfAction}. It
is also simple to check that it defines a left action of $\dProf$ on $\ProfTimes$, in the sense
that $\otimes$ respects units and composition in $\dProf$.

\section{Presentations and syntax}\label{sec:presentations_syntax}

In this section we will introduce syntax for algebraic theories, as well as for categories and
(co)presheaves. In general, a presentation of a given mathematical object consists of
generators and relations in a specified form. The object itself is then obtained by recursively
generating \emph{terms} according to a syntax, and then quotienting by the relations.

The material in this section is relatively standard (see, e.g.\ \cite{Jacobs:1999a} or \cite{Mitchell:1996a}). We go through
it carefully in order to fix the notation we will use in examples.

\subsection{Presentations of algebraic theories}

The presentation of an algebraic theory, as defined in \cref{def:alg_theory}, does not explicitly
mention products. Instead, it relies on multi-arity function symbols on the base sorts.
A signature simply lays out these sorts and function symbols.

\begin{definition}
    \label{def:signature}
  An \emph{algebraic signature} is a pair $\Sigma=(S_{\Sigma},\Phi_{\Sigma})$, where $S_{\Sigma}$ is
  a set of base sorts and $\Phi_{\Sigma}$ is a set of \emph{function symbols}. Each function symbol
  $f\in\Phi$ is assigned a (possibly empty, ordered) list of sorts $\dom(f)$ and a single sort
  $\cod(f)$. We use the notation $f\colon (s_1,\dots,s_n)\to s'$ to mean that
  $\dom(f)=(s_1,\dots,s_n)$ and $\cod(f)=s'$. We call $n$ the \emph{arity} of $f$; if $n=0$, we say
  it is $0$-ary and write it $f\colon()\to s'$.
\end{definition}

\begin{definition}
    \label{def:ASig}
  Let $\ASig$ denote the category of algebraic signatures. A morphism $F\colon\Sigma\to\Sigma'$
  between signatures is a pair of functions $F_S\colon S_{\Sigma}\to S_{\Sigma'}$ and
  $F_{\Phi}\colon\Phi_{\Sigma}\to\Phi_{\Sigma'}$, such that for any function symbol
  $f\in\Phi_{\Sigma}$ with $f\colon (s_1,\dots,s_n)\to s'$,
  $\dom(F_{\Phi}f)=(F_S(s_1),\dots,F_S(s_n))$ and $\cod(F_{\Phi}f)=F_S(s')$.
\end{definition}

\begin{example}
    \label{ex:monoid_action_sig}
  Consider the signature $\Sigma$ for the algebraic theory of monoid actions on a set. There are two
  sorts, $S=\{m,s\}$, and three function symbols, $\eta\colon()\to m$ for the unit, $\mu\colon
  (m,m)\to m$ for the multiplication, and $\alpha\colon (m,s)\to s$ for the action. If $\Sigma'$ is
  the signature for the theory of monoids, there is an evident inclusion morphism $\Sigma'\to\Sigma$.
\end{example}

\begin{example}
    \label{ex:underlying_signature}
  Every algebraic theory $\T$ has an underlying algebraic signature $\Sigma_{\T}$, whose
  base sorts are those of $\T$, and whose function symbols $f\colon (s_1,\dots,s_n)\to s'$ are
  the morphisms $f\in\T\mkern1mu(s_1\times\cdots\times s_n,s')$. This defines a functor
  $U\colon\ATh\to\ASig$.
\end{example}

We will see in \cref{rmk:free_alg_theory} that $U$ has a left adjoint, giving the free algebraic
theory generated by a signature. We construct this left adjoint \emph{syntactically}, and we will
make use of this syntax throughout the paper.

\begin{definition}
    \label{def:context}
  Fix an algebraic signature $\Sigma$. A \emph{context} $\Gamma$ \emph{over} $\Sigma$ is formally
  a set $\Gamma_v$ together with a function $\Gamma_s\colon \Gamma_v\to S_{\Sigma}$. In other words,
  a context is an object of the slice category $\Set_{/S_{\Sigma}}$, or equivalently the functor
  category $\Set^{S_{\Sigma}}$, regarding ${S_{\Sigma}}$ as a discrete category. When the set
  $\Gamma_v$ is finite, we will encode both $\Gamma_v$ and $\Gamma_s$ as a list $\Gamma=
  (x_1:s_1,\dots,x_n:s_n)$, and refer to $\Gamma$ as a \emph{finite} context.

  If $\Gamma=(x_1:s_1,\dots,x_n:s_n)$ and $\Gamma'=(x'_1:s'_1,\dots,x'_m:s'_m)$ are two contexts, we
  will write $\Gamma,\Gamma'=(x_1:s_1,\dots,x_n:s_n,\,x'_1:s'_1,\dots,x'_m:s'_m)$ for their
  concatenation, equivalently given by the induced function $\Gamma_v\sqcup\Gamma'_v\to S_{\Sigma}$.
  In practice, when concatenating contexts, we implicitly assume that variables are renamed as
  necessary to avoid name clashes. We denote the empty context by $\emptyContext$.
\end{definition}

\begin{remark}
  Intuitively, a context $(x_1:s_1,\dots,x_n:s_n)$ represents the declaration that symbol $x_i$
  ``belongs to the sort'' $s_i$. We treat the parentheses around a context as optional, and use them
  only as an aid to readability.
\end{remark}

The primary role of contexts is to explicitly list the ``free variables'' which are allowed to be
used inside an expression. Thus a context $(x:\Int,y:A)$ roughly corresponds to the English ``let
$x$ be an integer and let $y$ be an element of $A$''. The next definition makes this intuition
precise.

\begin{definition}
    \label{def:term}
  Fix an algebraic signature $\Sigma$ and a context $\Gamma$. A \emph{term} in context $\Gamma$ is
  an expression built out of the variables in $\Gamma$ and function symbols in $\Sigma$. Every term
  has an associated sort. We use the notation $\Gamma\vdash t:s$ to denote that $t$ is a term in
  context $\Gamma$ and that $t$ has sort $s$.

  Terms in context $\Gamma$ are defined recursively as follows:
  \begin{itemize}[nosep]
    \item if $(x:s)\in\Gamma$, then $\Gamma\vdash x:s$,
    \item if $f\colon (s_1,\dots,s_n)\to s'$ is a function symbol in $\Sigma$ and $\Gamma\vdash
      t_i:s_i$ for each $1\leq i\leq n$, then $\Gamma\vdash f(t_1,\dots,t_n):s'$.
  \end{itemize}
\end{definition}

We will sometimes refer to terms $\emptyContext\vdash t$ in the empty context as \emph{ground
terms}. A ground term $t$ must not contain any variables, and so must be constructed entirely out of
function symbols in $\Sigma$ (which includes 0-ary function symbols). Note that there can be terms
in non-empty contexts which contain no variables, but we will not call these ground terms.

\begin{example}
    \label{ex:monoid_action_term}
  In \cref{ex:monoid_action_sig} we gave the signature $\Sigma$ for monoid actions. An example term
  is $x_1:m,\, x_2:m,\, p:s\vdash \alpha(\mu(x_1,x_2),p):s$. An example ground term is
  $\emptyContext\vdash \mu(\eta,\mu(\eta,\eta)):m.$
\end{example}

One can think of a variable $x$ which appears in a term $t$ as a placeholder which can be replaced
by other expressions. For instance, in $x^3-2x$, the variable $x$ can be replaced by any number, or
even another polynomial. To make this precise, the operation of substitution is defined recursively.

\begin{definition}
    \label{def:substitution}
  Let $\Theta$, $\Gamma$, and $\Psi$ be contexts. If $(\Theta,\,x:s,\,\Psi)\vdash t:s'$ and
  $\Gamma\vdash u:s$ are terms, then $\Theta,\Gamma,\Psi\vdash t[x\coloneq u]:s'$ denotes the term
  obtained by replacing all occurrences of $x$ in $t$ with $u$. This substitution operation is
  defined formally by recursion:
  \begin{itemize}[nosep]
    \item $x[x\coloneq u] = u$,
    \item $x'[x\coloneq u] = x'$ if $x'\neq x$,
    \item $f(t_1,\dots,t_n)[x\coloneq u] = f(t_1[x\coloneq u],\dots,t_n[x\coloneq u])$.
  \end{itemize}

  If $(\Theta,\,x_1:s_1,\dots,x_n:s_n,\,\Psi)\vdash t:s'$ and $\Gamma\vdash u_i:s_i$ for all $1\leq
  i\leq n$, let $\Theta,\Gamma,\Psi\vdash t[x_1\coloneq u_1,\dots,x_n\coloneq u_n]:s'$ denote the
  term obtained by simultaneous substitution, also written $t[x_i\coloneq u_i]$ or
  $t[\vec{x}\coloneq\vec{u}]$ for compactness when this is clear.
\end{definition}

\begin{definition}
    \label{def:context_morphism}
  Let $\Gamma$ and $\Theta$ be contexts over an algebraic signature $\Sigma$, where
  $\Theta=(x_1:s_1,\dots,x_n:s_n)$ is finite. A \emph{context morphism} $\Gamma\to\Theta$ is a tuple
  of terms $\Gamma\vdash t_i:s_i$ for $1\leq i\leq n$, written $[x_1\coloneq t_1,\dots,x_n\coloneq
  t_n]\colon\Gamma\to\Theta$, or $[x_i\coloneq t_i]$ or $[\vec{x}\coloneq\vec{t}]$ for compactness.

  If $\Psi=(y_1:s'_1,\dots,y_m:s'_m)$ is another finite context, and
  $[\vec{y}\coloneq\vec{u}]\colon\Theta\to\Psi$ a context morphism, the composition
  $[\vec{y}\coloneq\vec{u}]\circ[\vec{x}\coloneq\vec{t}]$ is defined to be $[y_i\coloneq
  u_i[\vec{x}\coloneq\vec{t}]]\colon\Gamma\to\Psi$.
\end{definition}

\begin{example}
    \label{ex:monoid_action_morphism}
  Continuing with \cref{ex:monoid_action_sig,ex:monoid_action_term}, consider contexts
  $\Gamma=(x_1:m,\, x_2:m,\, p:s)$ and $\Theta=(y:m,\, q:s)$. There is a context morphism
  $\Gamma\to\Theta$ given by \[\left[y\coloneqq x_1,
  q\coloneqq\alpha\big(\mu(x_1,x_2),p\big)\right].\]
\end{example}

\begin{definition}
    \label{def:cat_of_contexts}
  Let $\Sigma$ be an algebraic signature. Define the \emph{category of contexts over $\Sigma$},
  denoted $\Cxt{\Sigma}$, to be the category of finite contexts over $\Sigma$ and context morphisms.
  We define the \emph{category of possibly infinite contexts over $\Sigma$}, denoted
  $\piCxt{\Sigma}$, to be the obvious extension.
\end{definition}

\begin{remark}
    \label{rmk:free_alg_theory}
  It is not hard to see that $\Cxt{\Sigma}$ has finite products, given by concatenation of contexts,
  and that the objects of $\Cxt{\Sigma}$ are freely generated under products by the base sorts
  (i.e.\ the singleton contexts). Thus this construction defines a functor $\Cxt{}\colon\ASig\to\ATh$.
  In fact, the functor $\Cxt{}$ is left adjoint to the underlying signature functor
  $U\colon\ATh\to\ASig$ from \cref{ex:underlying_signature}. Hence we will also refer to
  $\Cxt{\Sigma}$ as the \emph{free algebraic theory on the signature $\Sigma$}.
\end{remark}

The category $\ASig$ is for many purposes too rigid: a morphism in $\ASig$ is required to send
function symbols to function symbols, whereas one often wants to send function symbols to a more
complex expression. We now define this more flexible category of signatures.

\begin{definition}
    \label{def:ASig*}
  Define $\ASig^*$ to be the Kleisli category of the monad induced by the adjunction $\Cxt{}\dashv
  U$ of \cref{rmk:free_alg_theory} on the category $\ASig$.%
  \footnote{
    We use the symbol $\vdash$ between contexts and terms; we use the symbol $\dashv$ for
    adjunctions. Both are standard notation and no confusion should arise.
  }
  Concretely, $\ASig^*$ is defined just like $\ASig$ in \cref{def:ASig}, but where a morphism
  $F\colon\Sigma\to\Sigma'$ between signatures is allowed to send a function symbol $f\colon
  (s_1,\dots,s_n)\to s'$ in $\Phi_{\Sigma}$ to an arbitrary term
  $\big(x_1:F_S(s_1),\dots,x_n:F_S(s_n)\big)\vdash F_{\Phi}(f):F_S(s')$ over $\Sigma'$. Composition of these
  signature morphisms is defined by substitution.
\end{definition}

We are now ready to discuss presentations of algebraic theories. We begin with a careful
consideration of equations.

\begin{definition}
    \label{def:alg_equations}
  Let $\Sigma$ be an algebraic signature. An \emph{equation over} $\Sigma$ is a pair of terms
  $(t,t')$, where $t$ and $t'$ are in the same finite context $\Gamma$ and have the same sort $s$.
  We denote such a pair by the equation $\Gamma\vdash (t=t'):s$, or simply by $\Gamma\vdash t=t'$ if
  no confusion should arise.

  Let $E$ be a set of equations over $\Sigma$. Define $\approx_E$ to be the smallest equivalence
  relation on terms over $\Sigma$ such that
  \begin{enumerate}[nosep]
    \item if $\Gamma\vdash t=t'$ is an equation of $E$, then $\Gamma\vdash t\approx_E t'$,
    \item if $f\colon (s_1,\dots,s_n)\to s'$ is a function symbol and $\Gamma\vdash (t_i\approx_E
      t'_i):s_i$ for all $1\leq i\leq n$, then $\Gamma\vdash \bigl(f(t_1,\dots,t_n)\approx
      f(t'_1,\dots,t'_n)\bigr):s'$,
    \item \labelold[condition]{equations_substitution} if $\Theta\vdash (t\approx_E t'):s$, and
      $[\vec{x}\coloneq\vec{u}]\colon\Gamma\to\Theta$ is a context morphism, then
      $\Gamma\vdash\bigl(t[\vec{x}\coloneq\vec{u}]\approx_E t'[\vec{x}\coloneq\vec{u}]\bigr):s$.
  \end{enumerate}
\end{definition}

\begin{remark}
  \Cref{equations_substitution} of \cref{def:alg_equations} is equivalent to the following two
  conditions:
  \begin{enumerate}[label=\labelcref{equations_substitution}\alph*.,
                    ref=\labelcref{equations_substitution}\alph*,
                    labelindent=\parindent,
                    leftmargin=*]
    \item if $(\Theta,\Psi)\vdash (t\approx_E t'):s'$, then $(\Theta,x:s,\Psi)\vdash (t\approx_E
      t'):s'$ for any sort $s'$,
    \item if $(\Theta,x:s,\Psi)\vdash (t\approx_E t'):s'$, and $\Gamma\vdash u:s$ is a term, then we
      have $(\Theta,\Gamma,\Psi)\vdash \big(t[x\coloneqq u]\approx_E t'[x\coloneqq u]\big):s'$.
  \end{enumerate}
\end{remark}

\begin{definition}
    \label{def:alg_theory_presentation}
  Let $\Sigma$ be an algebraic signature, and $E$ a set of equations over $\Sigma$. The algebraic
  theory $\Cxt{\Sigma}/E$ is the quotient of $\Cxt{\Sigma}$ by the equivalence relation $\approx_E$.
  In other words, the objects of $\Cxt{\Sigma}/E$ are finite contexts over $\Sigma$, and the morphisms
  are $\approx_E$-equivalence classes of context morphisms. This quotient is well defined because
  $\approx_E$ is by definition preserved under substitution.

  We call the pair $(\Sigma,E)$ a \emph{presentation} of the algebraic theory $\T$ if there is
  an isomorphism $\T\iso\Cxt{\Sigma}/E$. We call it a \emph{finite presentation} if both
  $\Sigma$ and $E$ are finite.
\end{definition}

We now conclude our running example of monoid actions.

\begin{example}
  In \cref{ex:monoid_action_sig}, we gave the signature for monoid actions on sets, with sorts $m,s$
  and function symbols $\eta,\mu,\alpha$. To present the algebraic theory of monoid actions on sets,
  we add the following four equations:
  \begin{align*}
    x:m&\vdash \mu(x,\eta)=x
    &
    x,y,z:m&\vdash\mu\big(x,\mu(y,z)\big)=\mu\big(\mu(x,y),z\big)
    \\
    x:m&\vdash\mu(\eta,x)=x
    &
    x:m,\,y:m,\,p:s&\vdash\alpha\big(x,\alpha(y,p)\big)=\alpha\big(\mu(x,y),p\big)
  \end{align*}
\end{example}

\begin{definition}
    \label{def:APr}
  Define the category of \emph{algebraic presentations} $\APr$ as follows: the objects of $\APr$ are
  pairs $(\Sigma,E)$, where $\Sigma$ is an algebraic signature and $E$ is a set of equations over
  $\Sigma$. A morphism $F\colon(\Sigma,E)\to(\Sigma',E')$ is a morphism $F\colon\Sigma\to\Sigma'$ in
  the Kleisli category $\ASig^*$ such that $F(t)\approx_{E'}F(t')$ for each equation $t=t'$ of $E$.

  Let $\Cxt{}$ also denote the functor $\APr\to\ATh$ sending a pair $(\Sigma,E)$ to
  $\Cxt{\Sigma}/E$.
\end{definition}

\begin{remark}
    \label{rmk:canonical_presentation}
  Any algebraic theory $\T$ has a \emph{canonical presentation} $(\Sigma_{\T},E_{\T})$,
  where $\Sigma_{\T}$ is the underlying signature from \cref{ex:underlying_signature}, and
  $E_{\T}$ is defined such that an equation $x_1:s_1,\ldots,x_n:s_n\vdash (t=t'):s$ is in $E_{\T}$ if and only if the
  morphisms corresponding to $t$ and $t'$ in the hom-set $\T\left(s_1\times\cdots\times s_n, s\right)$ are equal.

  It is not hard to see that $\Cxt{\Sigma_{\T}}/E_{\T}\iso\T$ for any algebraic
  theory $\T$. It is also straightforward to check that $\Cxt{}\colon\APr\to\ATh$ is fully
  faithful, and hence an equivalence of categories.
\end{remark}

The following easy proposition establishes the fundamental connection between a presentation for an
algebraic theory $\T$ and algebras on $\T$.

\begin{proposition}
    \label{prop:algs_on_presented_theory}
  Let $\Sigma$ be an algebraic signature and $E$ be a set of equations, and consider an assignment of a set
  $F_s$ to each sort $s\in S_{\Sigma}$ and a function $F_f\colon F_{s_1}\times\cdots\times
  F_{s_n}\to F_{s'}$ to each function symbol $f\colon (s_1,\dots,s_n)\to s'$ in $\Phi_{\Sigma}$.
  This assignment uniquely extends to a $\Cxt{\Sigma}$-algebra $F$. In particular, given any term
  $(x_1:s_1,\dots,x_n:s_n)\vdash t:s'$, there is a function $F_t\colon F_{s_1}\times\cdots\times
  F_{s_n}\to F_{s'}$.

  The assignment uniquely extends to a $\Cxt{\Sigma}/E$-algebra if and only if it satisfies the
  equations $E$, i.e.\ for each equation $\Gamma\vdash t_1=t_2$ of $E$, the functions $F_{t_1}$ and
  $F_{t_2}$ are equal.
\end{proposition}

\begin{example}
    \label{CommRingsex}
  Consider the presentation $(\Sigma,E)$, where $S_{\Sigma}=\{\Int\}$ is the only sort,
  $\Phi_{\Sigma}$ consists of the five function symbols $0,1\colon()\to\Int$,
  $(-)\colon(\Int)\to\Int$, and ${+},{\times}\colon(\Int,\Int)\to\Int$, and $E$ is the set of
  equations shown in \cref{eqsrings}.
  \begin{figure}
    \centering
    \small
    \begin{tabular}{@{}r@{${}\vdash{}$}l@{}}
      \toprule
      $x,y,z:\Int$ & $(x+y)+z = x+(y+z)$ \\
      $x:\Int$     & $x+0=x$ \\
      $x,y:\Int$   & $x+y=y+x$ \\
      $x:\Int$     & $x+(-x)=0$ \\
      $x,y,z:\Int$ & $(x\times y)\times z = x\times(y\times z)$ \\
      $x:\Int$     & $x\times 1=x$ \\
      $x,y:\Int$   & $x\times y=y\times x$ \\
      $x,y,z:\Int$ & $x\times(y+z)=(x\times y)+(x\times z)$ \\
      \bottomrule
    \end{tabular}
    \caption{Equations for the algebraic theory of commutative rings.}
    \labelold{eqsrings}
  \end{figure}
  The algebraic theory $\T=\Cxt{\Sigma}/E$ generated by this presentation is a category with objects
  the contexts over $\Sigma$, such as $(a:\Int)$ or $(x,y,z:\Int)$. Some example context morphisms
  $(x,y,z:\Int)\to(a:\Int)$ are $[a\coloneq (x+y)\times(x+z)]$ and $[a\coloneq ((x\times x)+(x\times
  z))+((y\times x)+(y\times z))]$. These context morphisms are equivalent under $\approx_E$, so
  determine the same morphism in $\T$.

  It is possible to show that the category $\T\alg$ is equivalent to the category $\ncat{CRing}$ of
  commutative rings. In particular, a product preserving functor $F\colon\T\to\Set$ must send the
  context $(a:\Int)$ to some set $R$, the context morphism $[a\coloneq
  x+y]\colon(x,y:\Int)\to(a:\Int)$ to a function $R\times R\to R$, etc.

  We name the single sort `$\Int$' to fit with the practice in type theory, in which the ``elements''
  of a type $\tau$ are considered to be the ground terms of type $\tau$. That the ground terms of
  sort `$\Int$' are precisely the integers is equivalent to the fact that $\ZZ$ is the initial
  commutative ring.
\end{example}

We will use the following algebraic theory throughout the paper in our database examples.

\begin{example}
    \label{Typedef}
  Consider the multi-sorted algebraic theory $\mathbf{Type}$ generated by the finite theory
  presentation with base sorts, function symbols and equations as defined in \cref{tab:Type},
  page~\pageref{tab:Type}. It may be helpful to recall that implication can be written as
  $(a\Rightarrow b)=\neg a\vee b$. We use an axiomatization of Boolean algebras which is proven
  complete in \cite{Huntington:1904a}.

  \begin{figure}[htp]
    \centering
    \small
    \begin{tabular}{@{}r@{}l@{}}
      \toprule
      $S_\Sigma:\quad$ & $\Int$, $\Bool$, $\Str$ \\
      $\Phi_\Sigma:\quad$ &
      \begin{tabular}[t]{@{}lll@{}}
        \begin{tabular}[t]{@{}r@{${}:{}$}l@{}}
          $0,1$ & $()\to\Int$ \\
          $({-})$ & $(\Int)\to\Int$ \\
          $+,\times$ & $(\Int,\Int)\to\Int$ \\
          $\leq$ & $(\Int,\Int)\to\Bool$
        \end{tabular} &
        \begin{tabular}[t]{@{}r@{${}:{}$}l@{}}
          $\top,\bot$ & $()\to\Bool$ \\
          $\neg$ & $(\Bool)\to\Bool$ \\
          $\wedge$ & $(\Bool,\Bool)\to\Bool$ \\
          $\vee$ & $(\Bool,\Bool)\to\Bool$
        \end{tabular} &
        \begin{tabular}[t]{@{}r@{${}:{}$}l@{}}
          $\varepsilon,`\mathrm{a}',\dots,`\mathrm{Z}'$ & $()\to\Str$ \\
          $(.)$ & $(\Str,\Str)\to\Str$ \\
          $\textrm{eq}$ & $(\Str,\Str)\to\Bool$
        \end{tabular}
      \end{tabular} \\
      $E:\quad$ & \textit{\hspace{-.5em}boolean algebra:} \\
      &
      \begin{tabular}[t]{@{}r@{${}\vdash{}$}lr@{${}\vdash{}$}l@{}}
        $\alpha:\Bool$ & $\alpha\vee\bot=\alpha$ & $\alpha:\Bool$ & $\alpha\wedge\top=\alpha$ \\
        $\alpha,\beta:\Bool$ & $\alpha\vee\beta=\beta\vee\alpha$ &
        $\alpha,\beta:\Bool$ & $\alpha\wedge\beta=\beta\wedge\alpha$ \\
        $\alpha:\Bool$ & $\alpha\vee\neg\alpha=\top$ &
        $\alpha:\Bool$ & $\alpha\wedge\neg\alpha=\bot$ \\
        $\alpha,\beta,\gamma:\Bool$ & $\alpha\vee(\beta\wedge\gamma) = (\alpha\vee\beta)\wedge(\alpha\vee\gamma)$ &
        $\alpha,\beta,\gamma:\Bool$ & $\alpha\wedge(\beta\vee\gamma) = (\alpha\wedge\beta)\vee(\alpha\wedge\gamma)$
      \end{tabular} \\
      & \textit{\hspace{-.5em}commutative ring: all equations from \cref{eqsrings}} \\
      & \textit{\hspace{-.5em}totally pre-ordered ring:} \\
      &
      \begin{tabular}[t]{@{}r@{${}\vdash{}$}l@{}}
        $x,y,z:\Int$ & $\neg((x\leq y)\wedge(y\leq z))\vee(x\leq z)=\top$ \\
        $x,y:\Int$ & $(x\leq y)\vee(y\leq x)=\top$ \\
        $x,y,z,w:\Int$ & $\neg((x\leq y)\wedge(z\leq w))\vee(x+z\leq y+w)=\top$ \\
        $x,y,z:\Int$ & $\neg((x\leq y)\wedge(0\leq z))\vee(x\times z\leq y\times z)=\top$ \\
        $x,y,z:\Int$ & $\neg((x\times z\leq y\times z)\wedge(0\leq z))\vee(x\leq y)=\top$ \\
        $\emptyContext$ & $(1\leq0)=\neg\top$
      \end{tabular} \\
      & \textit{\hspace{-.5em}monoid:} \\
      &
      \begin{tabular}[t]{@{}r@{${}\vdash{}$}lr@{}l@{}}
        $s:\Str$ & $s.\varepsilon=s$ &
        $s,t,u:\Str$ & ${}\vdash(s.t).u=s.(t.u)$ \\
        $s:\Str$ & $\varepsilon.s=s$ &
      \end{tabular} \\
      & \textit{\hspace{-.5em}congruence:} \\
      &
      \begin{tabular}[t]{@{}r@{${}\vdash{}$}l@{}}
        $s:\Str$ & $(s\;\textrm{eq}\;s)=\top$ \\
        $s,t:\Str$ & $(s\;\textrm{eq}\;t)=(t\;\textrm{eq}\;s)$ \\
        $s,t,u:\Str$ &
          $\neg((s\;\textrm{eq}\;t)\wedge(t\;\textrm{eq}\;u))\vee(s\;\textrm{eq}\;u)=\top$ \\
          $s,t,u,v:\Str$ & $\neg((s\;\textrm{eq}\;t)\wedge(u\;\textrm{eq}\;v))
            \vee(s.u\;\textrm{eq}\;t.v)=\top$ 
      \end{tabular} \\
      & \textit{\hspace{-.5em}decidable equality:} \\
      &
      \begin{tabular}[t]{@{}r@{${}\vdash{}$}lcr@{}l@{}}
        $s,t,u:\Str$ & $(s.u\;\textrm{eq}\;t.u)=(s\;\textrm{eq}\;t)$ && \\
        $s,t,u:\Str$ & $(s.t\;\textrm{eq}\;s.u)=(t\;\textrm{eq}\;u)$ && \\
        $s,t:\Str$ & $(s.`\textrm{a}'\;\textrm{eq}\;t.`\textrm{b}')=\neg\top$
          & $\dots$ & $s,t:\Str$ & ${}\vdash(s.`\textrm{y}'\;\textrm{eq}\;t.`\textrm{z}')=\neg\top$ \\
        $s,t:\Str$ & $(`\textrm{a}'.s\;\textrm{eq}\;`\textrm{b}'.t)=\neg\top$
          & $\dots$ & $s,t:\Str$ & ${}\vdash(`\textrm{y}'.s\;\textrm{eq}\;`\textrm{z}'.t)=\neg\top$ \\
        $s:\Str$ & $(s.`\textrm{a}'\;\textrm{eq}\;\epsilon)=\neg\top$
          & $\dots$ & $s:\Str$ & ${}\vdash(s.`\textrm{z}'\;\textrm{eq}\;\epsilon)=\neg\top$ \\
        $s:\Str$ & $(`\textrm{a}'.s\;\textrm{eq}\;\epsilon)=\neg\top$
          & $\dots$ & $s:\Str$ & ${}\vdash(`\textrm{z}'.s\;\textrm{eq}\;\epsilon)=\neg\top$ \\
      \end{tabular} \\
      \bottomrule
    \end{tabular}
    \caption{Presentation of $\Type$, our running example of an algebraic theory.}
    \labelold{tab:Type}
  \end{figure}

  Clearly this algebraic theory includes the one from \cref{CommRingsex} as a sub-theory. Similarly
  to viewing ground terms of type `$\Int$' as the integers, those of type `$\Str$' are strings of
  letters, presented as the free monoid on $52$ generators (upper and lower case letters). For
  example, when we later write $`\mathrm{Admin}':\Str$ we actually mean the term
  $\emptyContext\vdash\mathrm{`A'.`d'.`m'.`i'.`n'}:\Str$.%
  \footnote{Similarly, we may use the shorthand $x-y$ to denote what is really $x+(-y)$.}
  It can be shown that the ground terms of type `$\Bool$' are $\{\textrm{True},\textrm{False}\}$.

\end{example}

\subsection{Presentations of algebras}
  \label{sec:presentation_algebra}
We now turn to presentations of algebras. Fix a presentation $(\Sigma,E)$, and let
$\T=\Cxt{\Sigma}/E$ be the presented algebraic theory. Recall by \cref{def:context} how we can
think of objects in $\Set_{/S_{\Sigma}}$, the category of $S_{\Sigma}$-indexed sets, as (possibly
infinite) contexts over $\Sigma$. There is an evident forgetful functor
$U\colon\T\alg\to\Set_{/S_{\Sigma}}$, which sends an algebra $A\colon\T\to\Set$ to the indexed
set $\{(UA)_s\}_{s\in S_{\Sigma}}$ where $(UA)_s=A(s)$.

\begin{definition}
    \label{def:free_alg}
  Let $\Gamma\in\Set_{S_{\Sigma}}$ be an $S_{\Sigma}$-indexed set, thought of as a context,
  and let $\T=\Cxt{\Sigma}/E$ be a presented algebraic theory. Define the
  \emph{free $\T$-algebra on $\Gamma$}, denoted $\FrAlg{\Gamma}\colon\T\to\Set$, to be
  the algebra for which $\FrAlg{\Gamma}(\Theta)$ is the set
  of $\approx_E$-equivalence classes of context morphisms $\Gamma\to\Theta$, with functoriality
  given by composition of context morphisms.
\end{definition}

\begin{remark}
    \label{rmk:term_alg}
  With notation as in \cref{def:free_alg}, the elements of $\FrAlg{\Gamma}$ of sort $s$ are just the
  $\approx_E$-equivalence classes of terms $\Gamma\vdash t:s$, and for any function symbol $f\colon
  (s_1,\dots,s_n)\to s'$ the induced function sends a tuple of terms $\Gamma\vdash t_i:s_i$ to the
  term $\Gamma\vdash f(t_1,\dots,t_n):s'$. By \cref{prop:algs_on_presented_theory}, this completely
  defines the algebra $\FrAlg{\Gamma}$, whose standard name is the \emph{term algebra} over
  $\Gamma$. There is an adjunction
  \begin{equation}\label{eqn:term_algebra}
  \FrAlg{\textrm{--}}\colon\Set_{S_{\Sigma}}\leftrightarrows(\Cxt{\Sigma}/E)\alg\cocolon U
  \end{equation}
  For this reason, we may refer to $\FrAlg{\Gamma}$ as the \emph{free algebra} on the generating
  context $\Gamma$.
\end{remark}

\begin{example}
    \label{ex:alg_constants}
  Let $\T$ be any algebraic theory. The algebra generated by the empty context is the
  algebra of constants $\FrAlg{\emptyset}=\kappa=\yoneda(\One)$; see \cref{ex:representable_algebras}.
  Note that any term in $\kappa$ is necessarily a ground term (\cref{def:term}). If $X$ is any other
  $\T$-algebra, we refer to terms in the image
  of the unique map $\kappa\to X$ as \emph{constants} in $X$.
\end{example}

\begin{example}
    \label{ex:polynomial_ring}
  Let $\T$ be the algebraic theory from \cref{CommRingsex}. The elements of $\FrAlg{x,y:\Int}$
  of the unique base sort `$\Int$' are $\approx_E$-equivalence classes of terms $x,y:\Int\vdash
  t:\Int$, such as $x,y:\Int\vdash (x+y)\times x$. But these are just polynomials in the variables
  $x$ and $y$, hence the commutative ring $\FrAlg{x,y:\Int}\in\T\alg$ is the polynomial ring
  $\ZZ[x,y]$, the free commutative ring on the set $\{x,y\}$.
\end{example}

\begin{remark}
    \label{rmk:free_alg}
  Note that the Kleisli category for the adjunction \cref{eqn:term_algebra} is precisely the
  opposite of the category $\piCxt{\Sigma}/E$ of possibly infinite contexts
  (\cref{def:cat_of_contexts}), and the restriction of this Kleisli category to those objects $X\to
  S_{\Sigma}$ of $\Set_{/S_{\Sigma}}$ for which $X$ is finite is the category
  $\op{(\Cxt{\Sigma}/E)}$. Another way to say this is that the algebraic theory $\Cxt{\Sigma}/E$ is
  isomorphic to the opposite of the category of finitely generated free algebras over
  $\Cxt{\Sigma}/E$, a fact which is true for any algebraic theory; see
  \cite[\S~8]{Adamek.Rosicky.Vitale:2011a}.
\end{remark}

\begin{definition}
    \label{def:eqs_in_context}
  Let $\Sigma$ be an algebraic signature, $\Gamma$ a context over $\Sigma$, and $e$ an equation over
  $\Sigma$. Say that $e$ is an \emph{equation in $\Gamma$} if it is between terms in context
  $\Gamma$. A set $E'$ of equations over $\Sigma$ is said to be \emph{in $\Gamma$} if each element
  $e\in E'$ is.
\end{definition}

\begin{definition}
    \label{def:presented_algebra}
  Let $(\Sigma,E)$ be a presentation for an algebraic theory $\T$. A \emph{$\T$-algebra
  presentation} is a pair $(\Gamma,E')$, where $\Gamma$ is a context over $\Sigma$, and $E'$ is a
  set of equations in $\Gamma$. Define $\FrAlg{\Gamma}/E'$ to be the quotient of the free $\T$-algebra
  $\FrAlg{\Gamma}$ (\cref{def:free_alg}) by the equations $E'$. Concretely, $(\FrAlg{\Gamma}/E')(\Theta)$ is the set of
  $\approx_{E\cup E'}$-equivalence classes of context morphisms $\Gamma\to\Theta$.

  A \emph{morphism of $\T$-algebra presentations} (cf.\ \cref{def:APr}) $(\Gamma',E')\to(\Gamma'',E'')$
  is simply a context morphism $[\vec{x}\coloneq\vec{t}]\colon\Gamma''\to\Gamma'$ (note the
  direction!) such that for each equation $\Gamma'\vdash u=v$ in $E'$, it follows that
  $\Gamma''\vdash u[\vec{x}\coloneq\vec{t}]\approx_{E\cup E''} v[\vec{x}\coloneq\vec{t}]$.
\end{definition}

\begin{example}
  Let $(\Sigma,E)$ be the theory of commutative rings as in \cref{CommRingsex}, and let
  $\Gamma=(x,y:\Int)$. Then $\FrAlg{\Gamma}$ is the polynomial ring $\ZZ[x,y]$ (\cref{ex:polynomial_ring}).
  If $e$ is the equation $x^3=y^2$ then $(\FrAlg{\Gamma}/\{e\})$ is the ring $\ZZ[x,y]/(x^3-y^2)$.
\end{example}

\begin{remark}
    \label{rmk:canonical_presentation_alg}
  Recall that by \cref{rmk:canonical_presentation}, every algebraic theory has a canonical presentation
  and the functor $\APr\to\ATh$ from presentations to theories is an equivalence. For algebras the
  same turns out to be true. First, every $\T$-algebra $A\in(\Cxt{\Sigma}/E)\alg$ has a
  canonical presentation $(\Gamma,E')$, where $\Gamma$ is the underlying $S_{\Sigma}$-indexed set $UA$, and $E'$
  is the set of equations $\Gamma\vdash t=t'$ for which $t$ and $t'$ are equated under the counit
  $\FrAlg{\Gamma}\to A$ of the adjunction from \cref{eqn:term_algebra}. 
  Second, the category of such presentations (whose objects and morphisms are given in
  \cref{def:presented_algebra}) is equivalent to $\T\alg$.
\end{remark}


\subsection{Presentations of categories}
  \label{ssec:category_presentations}

It is well known that categories are algebraic over directed graphs, i.e.\ that a category can be
presented by giving a graph together with a set of equations (see
e.g.~\cite[\S~II.8]{MacLane:1998a}). In the interest of completeness and consistency, we will show
here how to consider presentations for categories as a special case of presentations for algebraic
theories (see \cref{def:category_presentation}).

Formally, a directed graph $G$ consists of a set $G_0$ of nodes and a set $G_1$ of edges, together
with functions $\dom,\cod\colon G_1\to G_0$. Note that a directed graph $G$ can be seen as
an algebraic signature (\cref{def:signature}) in which all function symbols are unary. The set of
sorts of the unary signature is simply the set $G_0$ of nodes of $G$, and the set $G_1$ of edges is
taken as the set of function symbols.

\begin{definition}
    \label{def:unary_equations_signatures}
  Say that an algebraic signature $\Sigma$ is \emph{unary} when all of its function symbols are
  unary. As usual, we will write $f\colon A\to B$ as shorthand for $\dom(f)=A$ and $\cod(f)=B$. From now
  on, we will identify a graph $G$ with its corresponding unary algebraic signature.
\end{definition}

\begin{remark}
    \label{rmk:terms_and_paths}
  Let $G$ be a graph, and let $A,B\in G_0$ be nodes. Terms (\cref{def:term}) of type $B$ in a
  singleton context $(x:A)$ over (the unary signature associated to) $G$ can be identified with
  paths from $A$ to $B$ in $G$. As $G$ has only unary function symbols, all such terms must be of
  the form $x:A\vdash f_n(\dots f_2(f_1(x))):B$ for some $n\geq 0$.
\end{remark}

\begin{proposition}
  Let $G$ be a graph and $\Fr(G)$ the free category generated by $G$. Then $\Fr(G)$ is isomorphic to the full
  subcategory of $\Cxt{G}$ spanned by the singleton contexts.
\end{proposition}

\begin{notation}
  Let $\Sigma$ be an algebraic signature, and let $\Gamma\vdash t$ be a term in some context. In
  order to reduce parentheses, we will use the notation $\Gamma\vdash t.f_1.f_2\dots f_n$ to denote
  $\Gamma\vdash f_n(\dots f_2(f_1(t)))$, assuming that this is a well-formed term and that each
  $f_i$ is unary.
\end{notation}

\begin{definition}
  Let $\Sigma$ be a (not necessarily unary) signature. An equation $\Gamma\vdash t=t'$ over $\Sigma$
  is \emph{unary} if the context $\Gamma$ is a singleton. Say that a set $E$ of equations is unary
  if it consists only of unary equations.
\end{definition}

\begin{definition}
    \label{def:category_presentation}
  A \emph{category presentation} is a pair $(G,E)$, where $G$ is a graph and $E$ is a set of unary
  equations over $G$. Define the category \emph{presented} by $(G,E)$, denoted $\Fr(G)/E$, to be the
  full subcategory of $\Cxt{G}/E$ spanned by the singleton contexts.
\end{definition}

\begin{proposition}
    \label{prop:free_fpcat_on_cat}
  Let $(G,E)$ be a category presentation. The category $\Cxt{G}/E$ is the free
  category-with-finite-products on the category $\Fr(G)/E$. In particular, there is an equivalence
  of categories $(\Cxt{G}/E)\alg \equiv \Fun{\Fr(G)/E,\Set}$.
\end{proposition}

\subsection{Presentations of set-valued functors}
  \label{ssec:copresheaf_presentations}

If $\cat{C}$ is a category given by a presentation $(G,E)$, then \cref{prop:free_fpcat_on_cat}
provides a way of giving presentations for functors $\cat{C}\to\Set$. Let $\Gamma$ be a context over
the unary algebraic signature $G$. Then we can form the free algebra
$\FrAlg{\Gamma}\in(\Cxt{G}/E)\alg$ as in \cref{def:free_alg}. Under the equivalence $(\Cxt{G}/E)\alg
\equiv \Fun{\cat{C},\Set}$, this corresponds to a functor $\cat{C}\to\Set$, namely the restriction
of $\FrAlg{\Gamma}\colon\Cxt{G}/E\to\Set$ to its full subcategory of singleton contexts $\cat{C}$.
We will denote this restriction $\FrCpsh{\Gamma}$.

It is straightforward to check that the adjunction from \cref{rmk:term_alg} restricts to an
adjunction $\FrCpsh{\textrm{--}}\colon\Set_{G_0}\leftrightarrows\Fun{\cat{C},\Set}\cocolon U$.
Hence $\FrCpsh{\Gamma}$ is the \emph{free copresheaf} on $\cat{C}$ generated by $\Gamma$.

Similarly, if $E'$ is a set of equations in context $\Gamma$, as in \cref{def:eqs_in_context}, then we
denote by $\FrCpsh{\Gamma}/E'$ the restriction of $\FrAlg{\Gamma}/E'\colon\Cxt{G}/E\to\Set$ to
$\cat{C}$, and refer to this as the copresheaf \emph{presented} by $(\Gamma,E')$.

\subsection{Presentations of algebraic profunctors}
  \label{ssec:profunctor_presentations}

In \cref{sec:schemas} we will be interested in algebraic profunctors $M\colon\cat{C}\tickxar\T$
where $\cat{C}$ is a category and $\T$ is an algebraic theory; see \cref{def:profunctor_products}.
Our approach to presenting an algebraic profunctor $M$ between $\cat{C}=\Fr(G)/E_G$ and $\T=\Cxt{\Sigma}/E_{\Sigma}$
will be in terms of its collage $\bcol{M}$, as in \cref{ex:prof_collages}.

\begin{definition}
    \label{def:profunctor_presentation}
  Let $G=(G_0,G_1)$ be a graph (unary signature) and $\Sigma=(S_{\Sigma},\Phi_{\Sigma})$ be an algebraic signature.
  A \emph{profunctor signature} $\Upsilon$ from $G$ to $\Sigma$ is a set of unary function symbols, where each function
  symbol $\Mor{att}\in\Upsilon$ is assigned a sort $a\coloneqq\dom(\Mor{att})\in G_0$ and a sort $\tau\coloneqq\cod(\Mor{att})\in S_{\Sigma}$. We will sometimes refer to
  the function symbol $\Mor{att}\in\Upsilon$ as an \emph{attribute}, and denote it $\Mor{att}\colon a\to\tau$.

  A profunctor signature $\Upsilon$ has an associated algebraic signature
  $\scol{\Upsilon}=(S_{\scol{\Upsilon}}, \Phi_{\scol{\Upsilon}})$, with sorts $S_{\scol{\Upsilon}} =
  G_0\sqcup S_{\Sigma}$, and function symbols $\Phi_{\scol{\Upsilon}} =
  G_1\sqcup\Upsilon\sqcup\Phi_{\Sigma}$.

  Say that a set $E_{\Upsilon}$ of equations over $\scol{\Upsilon}$ is a set of \emph{profunctor
  equations} if for each equation $\Gamma\vdash (t_1=t_1):s'$ in $E_{\Upsilon}$, the context is a
  singleton $\Gamma=(x:s)$ with $s\in G_0$ and $s'\in S_{\Sigma}$.
\end{definition}

\begin{definition}
    \label{def:presented_alg_prof}
  Let $(G,E_G)$ be a category presentation, $(\Sigma,E_{\Sigma})$ an algebraic theory presentation,
  $\Upsilon$ a profunctor signature from $G$ to $\Sigma$, and  $E_{\Upsilon}$ a set of profunctor
  equations. Let $E_{\scol{\Upsilon}}=E_G\cup E_{\Sigma}\cup E_{\Upsilon}$.
  Define the algebraic profunctor \emph{presented} by this data,
  denoted $\FrAlg{\Upsilon}/E_{\Upsilon}\colon\Fr(G)/E_G\tickxar\Cxt{\Sigma}/E_{\Sigma}$, as follows:
  \begin{itemize}[nosep]
    \item for any node $a\in G_0$ and context $\Gamma\in\Cxt{\Sigma}$, the set
      $(\FrAlg{\Upsilon}/E_{\Upsilon})(a,\Gamma)$ is the hom set
      $(\Cxt{\scol{\Upsilon}}/E_{\scol{\Upsilon}})((x:a),\Gamma)$, i.e.\ the set of $\approx_{E_G\cup
      E_{\Sigma}\cup E_{\Upsilon}}$-equivalence classes of context morphisms $(x:a)\to\Gamma$ over
      $\scol{\Upsilon}$,
    \item the functorial actions are given by substitution.
  \end{itemize}
\end{definition}

It is clear from the definition that the collage of the profunctor $\FrAlg{\Upsilon}/E_{\Upsilon}$
is a full subcategory of $\Cxt{\scol{\Upsilon}}/E_{\scol{\Upsilon}}$. In fact, it is not much harder
to see the following proposition; cf.\ \cref{prop:free_fpcat_on_cat}.

\begin{proposition}
    \label{prop:collage_free_product_completion}
  Let $\cat{C}$ be a category with presentation $(G,E_G)$, let $\T$ be an algebraic theory with
  presentation $(\Sigma,E_{\Sigma})$, and let $P\colon\cat{C}\tickxar\T$ be an algebraic profunctor
  with presentation $(\Upsilon,E_{\Upsilon})$. The category
  $\Cxt{\scol{\Upsilon}}/E_{\scol{\Upsilon}}$ is the free completion of the collage $\scol{P}$ under
  finite products for which existing products in $\T$ are preserved. In particular, the category
  $(\Cxt{\scol{\Upsilon}}/E_{\scol{\Upsilon}})\alg$ of functors $(\Cxt{\scol{\Upsilon}}/E_{\scol{\Upsilon}})\to\Set$ which preserve all finite
  products is equivalent to the category of functors $\scol{P}\to\Set$ whose restriction to
  $\T$ preserves finite products.
\end{proposition}

\begin{example}
  Let $\cat{C}$ be the category presented by the terminal graph $G_0=\{X\}$, $G_1=\{f\}$, with
  equation $x:X\vdash x.f=x.f.f$. Let $\T$ be the algebraic theory of commutative rings, as in
  \cref{CommRingsex}. Consider the algebraic profunctor $M\colon\cat{C}\tickxar\T$ presented by
  a single attribute $\Upsilon=\{p\colon X\to\Int\}$ and a single equation $E=\{x:X\vdash
  x.f.p=x.p\times x.p\}$. One can check that this presents the following profunctor
  $\cat{C}\tickxar\T$:
    \begin{align*}
    \FrAlg{\Upsilon}/E_{\Upsilon}
    &\cong
    \mathbb{Z}[x.f^n.p]/\big(\;x.f^{n+1}.p=(x.f^n.p)^2\;,\;\;x.f^{n+2}.p=x.f^{n+1}.p \;\big)
    \\&\cong
    \mathbb{Z}[y_0,y_1]/\big(\;y_1=y_0^2\;,\;\;y_1=y_1^2\;\big)
    \\&\cong
    \mathbb{Z}[y]/\big(\;y^2=y^4\;\big)
  \end{align*}
  where, in the first line, $n$ ranges over all natural numbers. The edge $f\in G_1$ induces the
  ring endomorphism $f(y)\mapsto y^2$.
\end{example}

\section{Algebraic database schemas}\label{sec:schemas}

In this section we move beyond background and into our construction of databases. What we call
(algebraic) databases straddle what are traditionally known as relational databases and the more modern
graph databases. Importantly, algebraic databases also integrate a programming language $\Type$,
by which to operate on attribute values. 

We take our terminology from the relational database world. That is, a database consists of a conceptual 
layout, called a \emph{schema}, as well as some conforming data, called an \emph{instance} 
(because it represents our knowledge in the current instant of time). In this section we discuss the category of
schemas; in \cref{sec:instances} we discuss instances on them.

\subsection{Schemas}
For the rest of the paper, $\Type$ will be an arbitrary multi-sorted finitely presented algebraic theory,
as defined in \cref{def:alg_theory_presentation}. However, in all examples, we will fix $\Type$ to be
the algebraic theory described in \cref{Typedef}. Recall from \cref{def:profunctor_products} the notion of algebraic
profunctors $M\in\ProfTimes(\cat{C},\Type)$, denoted $M\colon\cat{C}\tickxar\Type$.

\begin{definition}
    \label{def:schema}
  A \emph{database schema} $\schema{S}$ over $\Type$ is a pair $(\snodes{S},\satts{S})$, where
  \begin{itemize}
  \item $\snodes{S}$ is a category, and
  \item $\satts{S}\colon\snodes{S}\tickxar\Type$ is an algebraic profunctor;
  i.e.\ $\satts{S}\in\ProfTimes(\snodes{S},\Type)$.
  \end{itemize}
  We refer to $\snodes{S}$ as the \emph{entity category} of $\schema{S}$ and to $\satts{S}$ as the
  \emph{observables} profunctor. We will also write
  $\satts{S}\colon\op{\snodes{S}}\to\TypeAlg$ for the exponential transpose of
  $\satts{S}\colon\op{\snodes{S}}\times\Type\to\Set$; see \cref{lem:profunctor_products}.
\end{definition}

\begin{remark}
    \label{collageschema}
  It is often convenient to work with schemas in terms of their collages. If $\schema{S}$ is the
  schema $\satts{S}\colon\snodes{S}\tickxar\Type$, we write $\scol{S}$ for the collage of the
  profunctor $\satts{S}$; see \cref{ex:prof_collages}. By \cref{equiv_prof_cat2}, it comes equipped
  with a map $\scol{S}\to\ncat{2}$ and we refer to the two pullbacks below respectively as the
  \emph{entity side} and the \emph{type side} of the collage:
  \begin{equation*}
    \begin{tikzcd}
      \snodes{S} \ar[r,"\incnodes{S}"] \ar[d,"!"'] \ar[dr,phantom,very near start, "\lrcorner"]
      & \scol{S} \ar[d]
      & \Type \ar[l,"\inctypes"'] \ar[d,"!"] \ar[dl,phantom,very near start, "\llcorner"] \\
      \{*\} \ar[r,"0"']
      & \ncat{2}
      & \{*\} \ar[l,"1"]
    \end{tikzcd}
  \end{equation*}
\end{remark}

\begin{example}
    \label{ex:schema_constants}
  Any $\Type$-algebra $X\colon\Type\to\Set$ can be regarded as a schema $(\singleton,X)$,
  where the entity category is terminal.
  In particular, the initial $\Type$-algebra $\kappa$, described in \cref{ex:representable_algebras}, can be viewed as a
  schema $\schema{U}=(\singleton,\kappa)$ called the \emph{unit schema}.%
  \footnote{$\schema{U}$ is the unit of a certain symmetric monoidal structure on $\Schema$
  (\cref{Schema}), whose restriction to entities is the cartesian monoidal structure on $\Cat$;
  however, we do not pursue that here.}
\end{example}

\subsection{Presentations of schemas}
  \label{Schemapresentation}

A presentation for a schema $\satts{S}\colon\snodes{S}\tickxar\Type$ is simply a presentation for the
category $\snodes{S}$ (see \cref{def:category_presentation}) together with a presentation for the
algebraic profunctor $\satts{S}$ (see \cref{def:presented_alg_prof}). We spell this out in
\cref{def:schema_presentation}.

\begin{definition}
    \label{def:schema_presentation}
  A \emph{schema signature} $\Xi=(G_{\Xi},\Upsilon_{\Xi})$ consists of a graph $G_{\Xi}$
  together with a profunctor signature $\Upsilon_{\Xi}$ from
  $G_{\Xi}$ to the signature of $\Type$.

  A \emph{schema presentation} $(\Xi,E_\Xi)$ consists of a schema signature $\Xi$, together with equations
  $E_\Xi=(\eqnodes{E},\eqatts{E})$, where $\eqnodes{E}$ is a set of unary equations over $G_{\Xi}$, and
  $\eqatts{E}$ is a set of profunctor equations over
  $\Upsilon_{\Xi}$. Note that $(G_{\Xi},\eqnodes{E})$ is a presentation for a category, which will be the
  entity category $\snodes{S}$, and $(\Upsilon_{\Xi},\eqatts{E})$ is a presentation for an algebraic profunctor
  $\snodes{S}\tickxar\Type$. We denote the presented schema by $\Fr(\Xi)/E_{\Xi}$.

  We will write $\scol{\Xi}$ to mean the associated algebraic signature $\scol{\Upsilon}_{\Xi}$ as in
  \cref{def:profunctor_presentation}, with sorts $(G_{\Xi})_0\sqcup S_{\Sigma}$ and
  function symbols $(G_{\Xi})_1\sqcup\Upsilon_{\Xi}\sqcup\Phi_{\Sigma}$,
  where $\Type\cong\Cxt{\Sigma}/E_{\Sigma}$.
\end{definition}

In what follows, we refer to function symbols in $\Upsilon_{\Xi}$ as \emph{attributes}, and refer to
a general term $(x:A)\vdash t:\tau$, where $A\in(G_{\Xi})_0$ and $\tau\in\Type$, as an \emph{observable
on $A$ of type $\tau$}. In other words, for a schema $\satts{S}\colon\snodes{S}\tickxar\Type$ and
objects $A\in \snodes{S}$ and $\tau\in\Type$, an observable on $A$ of type $\tau$ is an element
$t\in\satts{S}(A,\tau)$.

\begin{example}
The unit $\schema{U}=(\singleton,\kappa)$ of \cref{ex:schema_constants} is presented by the graph
with one vertex and no edges, the empty profunctor signature, and no equations.
That is, $\schema{U}$ has no attributes, so each of its
observables is a ground term $\emptyContext\vdash c:\tau$, i.e.\ a constant $c\in\kappa(\tau)=\Type(\One,\tau)$.
\end{example}

\begin{example}
    \label{Sschema}
  Let $\Type$ be as in \cref{Typedef}. Consider the presentation $(\Xi,\eqnodes{E},\eqatts{E})$ for
  a schema $\schema{S}$ as displayed in \cref{Spresentation}, which will serve as a motivating
  example throughout the paper. In this presentation, we use the labels "$\Ents$" for $(G_{\Xi})_0$,
  "$\Edges$" for $(G_{\Xi})_1$, "$\Atts$" for $\Upsilon_{\Xi}$, "$\PathEqs$" for $\eqnodes{E}$, and
  "$\ObsEqs$" for $\eqatts{E}$.
  \begin{figure}
    \centering
    \small
    \begin{tabular}{@{}l@{\hspace{1.5em}}r@{}} \toprule
      \begin{tabular}[t]{@{}ll@{}}
        \Ents:
        & $\EntOb{Emp}$,\ $\EntOb{Dept}$ \\
        \Edges:
        & \begin{tabular}[t]{@{}l@{${}:{}$}l@{${}\to{}$}l@{}}
            $\Mor{mgr}$ & $\EntOb{Emp}$&$\EntOb{Emp}$ \\
            $\Mor{wrk}$ & $\EntOb{Emp}$&$\EntOb{Dept}$ \\
            $\Mor{sec}$ & $\EntOb{Dept}$&$\EntOb{Emp}$ \\
          \end{tabular} \\
        \PathEqs:
        & \begin{tabular}[t]{@{}l@{${}={}$}l@{}}
            $e:\EntOb{Emp}\phantom{u}\vdash e\Mor{.mgr.mgr}$ & $e\Mor{.mgr}$ \\
            $e:\EntOb{Emp}\phantom{u}\vdash e\Mor{.mgr.wrk}$ & $e\Mor{.wrk}$ \\
            $d:\EntOb{Dept}\vdash d\Mor{.sec.wrk}$ & $d$ \\
          \end{tabular} \\
      \end{tabular}
      &
      \begin{tabular}[t]{@{}ll@{}}
        \Atts:
        &
          \begin{tabular}[t]{@{}l@{${}:{}$}l@{${}\to{}$}l@{}}
            $\Mor{last}$ & $\EntOb{Emp}$&$\Str$ \\
            $\Mor{name}$ & $\EntOb{Dept}$&$\Str$ \\
            $\Mor{sal}$ & $\EntOb{Emp}$&$\Int$ \\
          \end{tabular} \\
        \ObsEqs:
        & \begin{tabular}[t]{@{}l@{${}={}$}l@{}}
            \multicolumn{2}{@{}l@{}}{$e:\EntOb{Emp}\vdash$} \\
            $\quad(e.\Mor{sal}\leq e.\Mor{mgr.sal})$ & $\top$ \\
          \end{tabular} \\
      \end{tabular} \\ \bottomrule
    \end{tabular}
    \caption{Presentation of $\schema{S}$, our running example of a schema.}
    \labelold{Spresentation}
  \end{figure}

  Below \cref{Spicture} is a graphical display of this presentation; its two grey dots are the entities,
  its six arrows are the edges and attributes, and its four equations are the path and observable equations.
  \begin{equation}
      \label{Spicture}
    \begin{tikzpicture}[schema]
      \node[entity]                            (Dept) {Dept};
      \node[entity,above=of Dept]              (Emp)  {Emp};
      \node[type,right=1.2 of Emp]             (Int)  {Int};
      \node[type,below=of Int]                 (Str)  {Str};
      \node[type,below right=.5 and .5 of Int] (Bool) {Bool};
      \path (Emp)  edge["wrk"'name=wrk,bend right] (Dept)
                   edge["mgr"name=mgr,loop above]  (Emp)
                   edge["sal"]                     (Int)
                   edge["last",inner sep=2pt]      (Str)
            (Dept) edge["sec"',bend right,
                         inner sep=3pt,pos=.4]     (Emp)
                   edge["name"']                   (Str);
      \path (Bool.east |- Str.south) ++(9pt,0) node[anchor=south west,schema eqs] (Eqs) {
        $\Mor{mgr.mgr} = \Mor{mgr}$ \\
        $\Mor{mgr.wrk} = \Mor{wrk}$ \\
        $\Mor{sec.wrk} = \id$       \\[1ex]
        $(\Mor{sal}\leq\Mor{mgr}.\Mor{sal}) = \top$
      };
      \node[draw,fit=(Eqs) (wrk) (mgr)] (Sbox) {};
      \node[schema name,below left=of Sbox.north east] {$\schema{S}$};
    \end{tikzpicture}
  \end{equation}

  The presented schema $\schema{S}$ is built according to \cref{def:category_presentation,def:presented_alg_prof},
  as we now describe explicitly. The entity category $\snodes{S}$ is the free
  category on the subgraph of grey objects and arrows between them, modulo the top three equations.
  An example (context) morphism $\EntOb{Emp}\to\EntOb{Dept}$ in $\snodes{S}$ is given by the path
  $\Mor{mgr.wrk.sec.mgr.wrk}$. From the equations, we can show that it is equivalent to $\Mor{wrk}$,
  \begin{displaymath}
    e:\EntOb{Emp}\vdash\left(e\Mor{.mgr.wrk.sec.mgr.wrk}\approx e\Mor{.wrk}\right):\EntOb{Dept}
  \end{displaymath}
  In other words, these two terms name the same morphism in $\snodes{S}$.

  The observables profunctor $\satts{S}\colon\snodes{S}\tickxar\Type$ is freely generated by the three arrows from an
  $\snodes{S}$-object \tikz{\node[circle, draw, fill=black!20, inner sep=1mm] {};} to a
  $\Type$-object \tikz{\node[circle, draw, inner sep=1mm] {};}, modulo the fourth equation. An
  example observable $\EntOb{Dept}\to\Bool$, i.e.\ an element of
  $\satts{S}(\EntOb{Dept},\Bool)$, is "whether a department $d$ is named Admin", given by the term
  $(d:\EntOb{Dept})\vdash\mathrm{eq}(d\Mor{.name},\mathrm{Admin})$. By the fourth equation, we can
  show it is equivalent to a more complex observable,
  \[
    d:\EntOb{Dept}\vdash
    \left((d\Mor{.sec.sal}\leq d\Mor{.sec.mgr.sal})\wedge
    \mathrm{eq}(d\Mor{.name},\mathrm{Admin})\right):\Bool.
  \]

  The schema $\schema{S}$ can accommodate database instances in some company setting, as we will see
  in \cref{Sinstance}. In such, there exist tables of employees and departments. In each there are
  columns (sometimes called \emph{foreign keys}) that reference other tables in order to state where
  an employee works, who is the departmental secretary, etc. There are also columns that state the
  last name and salary of each employee, etc. The equations express integrity constraints, e.g.\ the
  fact that the secretary of a department works therein, or that every employee is paid less than
  his or her manager.
\end{example}

\subsection{Schema mappings} We now discuss morphisms of schemas, also known as schema mappings \cite{Doan:2012a}.
These will eventually be the vertical morphisms in a proarrow equipment. Recall by \cref{def:profunctor_products}
that a morphism between two algebraic profunctors is just a 2-cell between profunctors as in \cref{eqn:Prof2cells}.

\begin{definition}
    \label{sch_mapping}
  A \emph{schema mapping} $\map{F}\colon\schema{S}\to\schema{T}$ is a pair $(\mnodes{F},\matts{F})$, where
  \begin{itemize}
  \item $\mnodes{F}\colon\snodes{S}\to\snodes{T}$ is a functor, and
  \item $\matts{F}$ is a 2-cell in $\dProf$
    \begin{equation*}
      \begin{tikzcd}
        \snodes{S} \ar[r,tickx,"\satts{S}" domA] \ar[d,"\mnodes{F}"']
        & \Type \ar[d,equal] \\
        \snodes{T} \ar[r,tickx,"\satts{T}"' codA]
        & \Type
        \twocellA{\matts{F}}
      \end{tikzcd}
    \end{equation*}
  \end{itemize}
  We will also write $\matts{F}$ for the corresponding natural transformation
  $\matts{F}\colon\satts{S}\Rightarrow\satts{T}\circ\op{\mnodes{F}}$ of functors $\op{\snodes{S}}\to\Type\alg$.
  \end{definition}
  \begin{definition}\label{Schema}
    Define the \emph{category of schemas}, denoted $\Schema$, to have database schemas as objects
    and schema mappings as morphisms.
\end{definition}

\begin{remark}
    \label{collagefunctor}
  From the universal property of collages (\cref{def:collage}) in $\dProf$, it follows easily that a schema mapping
  $\map{F}\colon\schema{S}\to\schema{T}$ is equivalently a functor $\mcol{F}$ over $\ncat{2}$ between their collages,
  as in the left-hand diagram
  \begin{equation}\label{eq:mcol_diagrams}
  \begin{tikzcd}[column sep=small, row sep=3.5ex]
      \scol{S} \ar[rr,"\mcol{F}"]\ar[dr,"p"']
    && \scol{T}\ar[dl,"p'"] \\
    & \ncat{2} &
  \end{tikzcd}
  \qquad\qquad
  \begin{tikzcd}[row sep=4.5ex]
    \snodes{S} \ar[r,"\mnodes{F}"] \ar[d,"\incnodes{S}"']
    & \snodes{T} \ar[d,"\incnodes{T}"] \\
    \scol{S} \ar[r,"\mcol{F}"']
    & \scol{T}
  \end{tikzcd}
  \qquad
  \begin{tikzcd}[row sep=4.5ex]
    \Type \ar[d,"\inctypes"']\ar[r,equal]& \Type \ar[d,"\inctypes"] \\
    \scol{S} \ar[r,"\mcol{F}"']
    & \scol{T}
  \end{tikzcd}
  \end{equation}
  such that the middle and right-hand diagrams are the pullbacks of the left-hand diagram along the two maps
  $\ncat{1}\to\ncat{2}$; see \cref{equiv_prof_cat2} and \cref{collageschema}. By definition, a schema mapping acts as identity on the $\Type$-side of the collages.
\end{remark}

\begin{example}
    \label{sch_map_G}
  Consider the schema presentation given by the following graph, attributes, and equations:
  \begin{equation}\label{Tpicture}
  \begin{tikzpicture}[schema]
    \node[entity]                              (Dept) {Dept};
    \node[entity,above left=.5 and .8 of Dept] (QR)   {QR};
    \node[entity,above=of Dept]                (Emp)  {Emp};
    \node[type,right=1.2 of Emp]               (Int)  {Int};
    \node[type,below=of Int]                   (Str)  {Str};
    \node[type,below right=.5 and .5 of Int]   (Bool) {Bool};
    \path (QR)   edge["f",inner sep=3pt]        (Emp)
                 edge["g"',inner sep=3pt]       (Dept)
          (Emp)  edge["wrk"',bend right]        (Dept)
                 edge["mgr"name=mgr,loop above] ()
                 edge["sal"]                    (Int)
                 edge["last",
                      inner sep=2pt]            (Str)
          (Dept) edge["sec"',bend right,
                      inner sep=3pt,
                      pos=.4]                   (Emp)
                 edge["name"']                  (Str);
    \path (Bool.east |- Str.south) ++(9pt,0) node[anchor=south west,schema eqs] (Eqs) {
      $\Mor{mgr.mgr} = \Mor{mgr}$                 \\
      $\Mor{mgr.wrk} = \Mor{wrk}$                 \\
      $\Mor{sec.wrk} = \id$                       \\[1ex]
      $(\Mor{sal} \leq \Mor{mgr.sal}) = \top$     \\
      $(\Mor{f.sal} \leq \Mor{g.sec.sal}) = \top$ \\
      $\Mor{f.wrk.name} = \mathrm{Admin}$
    };
    \node[draw,fit=(Eqs) (QR) (mgr)] (Tbox) {};
    \node[schema name,below left=of Tbox.north east] {$\schema{T}$};
  \end{tikzpicture}
  \end{equation}
  The schema $\schema{T}$ which it presents, includes $\schema{S}$ of \cref{Sschema}.
  In addition it has a new entity
  \EntOb{QR} \dash named for its eventual role as a query result table in \cref{Queries} \dash as well as
  two new edges $\Mor{f},\Mor{g}$, and two new observable equations
  \begin{equation}\label{eqs_for_S}
  q:\EntOb{QR}\vdash(q\Mor{.f.sal}\leq q\Mor{.g.sec.sal})=\top,\quad
  q:\EntOb{QR}\vdash q\Mor{.f.wrk.name}=\mathrm{Admin}.
  \end{equation}
  Thus we have a schema inclusion
  $\map{G}\colon\schema{S}\to\schema{T}$, which of course restricts to identity on the $\Type$-side by definition.
\end{example}

\begin{example}
    \label{sch_map_F}
  We will now describe another schema mapping, with codomain the above schema
  $\schema{T}$. We will again do so in terms of presentations:
  \begin{displaymath}
  \begin{tikzpicture}[schema]
    \node[entity]                            (A)    {A};
    \node[type,above right=.5 and .8 of A]   (Int)  {Int};
    \node[type,below=of Int]                 (Str)  {Str};
    \node[type,below right=.5 and .5 of Int] (Bool) {Bool};
    \path (A) edge["diff",inner sep=3pt]     (Int)
              edge["emp\_last",bend left,
                   inner sep=2pt]            (Str)
              edge["dept\_name"',bend right,
                   inner sep=2pt]            (Str);
    \node[entity,right=1.8 of Bool] (QR)    {QR};
    \node[entity,below right=.5 and .8 of QR]  (Dept)  {Dept};
    \node[entity,above=of Dept]                (Emp)   {Emp};
    \node[type,right=1.2 of Emp]               (Int')  {Int};
    \node[type,below=of Int']                  (Str')  {Str};
    \node[type,below right=.5 and .5 of Int']  (Bool') {Bool};
    \path (QR)   edge["f",inner sep=3pt]  (Emp)
                 edge["g"',inner sep=3pt] (Dept)
          (Emp)  edge["wrk"',bend right]  (Dept)
                 edge["mgr"name=mgr,loop above] ()
                 edge["sal"]              (Int')
                 edge["last",
                      inner sep=2pt]      (Str')
          (Dept) edge["name"']            (Str')
                 edge["sec"',bend right,
                      inner sep=3pt,
                      pos=.4]             (Emp);
    \path (Str'.south east) ++(9pt,0) node[anchor=south west,schema eqs] (Eqs) {
        plus eqs\\from \cref{Tpicture}
    };
    \node[fit=(QR) (mgr) (Bool') (Dept)]             (Tbox) {};
    \node[schema name,below left=of Tbox.north east] {$\schema{T}$};
    \node[fit=(A) (Str) (Bool) (mgr.north -| Int)] (Rbox) {};
    \node[schema name,below left=of Rbox.north east] {$\schema{R}$};
    \draw[->,very thick] (Rbox.east) to node[above] {$\map{F}$} (Rbox.east -| Tbox.north west);
  \end{tikzpicture}
  \end{displaymath}
  The schema $\schema{R}$ has a terminal entity category $\snodes{R}=\{\EntOb{A}\}$, along with three generating
  attributes \dash namely $\Mor{diff}$, $\Mor{emp\_last}$, and $\Mor{dept\_name}$ \dash from the unique object to the base
  $\Type$-sorts $\Str$, $\Int$. Schema $\schema{T}$ has six
  equations, whereas $\schema{R}$ has none.

  The schema mapping $\map{F}\colon \schema{R}\to\schema{T}$ viewed as a functor $\mcol{F}\colon \scol{R}\to\scol{T}$,
  is defined to map the unique object $\EntOb{A}\in\snodes{R}$ to $\EntOb{QR}\in\snodes{T}$ on the
  entity side, and to map the three attributes to the following observables in $\schema{T}$:
  \begin{align*}
  \Mor{diff}\mapsto\Mor{g.sec.sal}-\Mor{f.sal}\qquad\Mor{emp\_last}\mapsto \Mor{f.last}\qquad \Mor{dept\_name}\mapsto\Mor{g.name}
  \end{align*}
  Since on the type side it is identity and $\schema{R}$ has no equations, there is nothing more to check; we have
  defined a schema mapping. This choice will be justified by \cref{pideltaquery}.
\end{example}

\section{Algebraic database instances}\label{sec:instances}

\subsection{Instances and transforms}

Given a database schema $\schema{S}$, which is a conceptual layout of entities and their attributes
(see \cref{def:schema}), we are ready to assign each entity a table full of data laid out according
to the schema. Such an assignment is called an \emph{instance} on $\schema{S}$; it is a set-valued
functor (copresheaf) of a certain form. Morphisms between instances are often called
(attribute-preserving) database homomorphisms \cite{Abiteboul:1995a}, but we call them
\emph{transforms} because they are nothing more than natural transformations.

\begin{definition}
    \label{inst_collage}
  Let $\schema{S}$ be a database schema, $\scol{S}$ its collage, and
  $\inctypes\colon\Type\to\scol{S}$ the inclusion of the type side (see \cref{collageschema}). An
  \emph{$\schema{S}$-instance} $\inst{I}$ is a functor $I\colon\scol{S}\to\Set$ such that the
  restriction $\iterms{I}\coloneqq I\circ\inctypes$ preserves finite products, i.e.\
  $\iterms{I}\colon\Type\to\Set$ is a $\Type$-algebra.

  Define the \emph{category of $\schema{S}$-instances}, denoted $\sinst{S}$, to be the full
  subcategory of the functor category $\Fun{\scol{S},\Set}$ spanned by the $\schema{S}$-instances. A
  morphism $\trans{\alpha}\colon\inst{I}\to\inst{J}$ of instances is called a \emph{transform}.
\end{definition}

\begin{example}
    \label{Sinstance}
  Recall the schema $\schema{S}$ generated by the presentation of \cref{Sschema}, which had
  employees and departments as entities, edges and attributes such as manager and salary, and
  equations such as an employee's salary must be less than that of his or her manager. A summary of
  an $\schema{S}$-instance $\inst{J}$ is displayed in \cref{fig:Sinstance}, with one table for each
  entity in $\schema{S}$, and with a column for each edge and attribute.
  \begin{figure}
    \centering
    \small
    \begin{tabular}[c]{@{}>{\itshape}c|c>{\itshape}c>{\itshape}cc@{}}
      \toprule
      \upshape\EntOb{Emp} & \Mor{last} & \upshape\Mor{wrk} & \upshape\Mor{mgr} & \Mor{sal} \\
      \midrule
      e1 & Gauss    & d3 & e1 & $250$ \\
      e2 & Noether  & d2 & e4 & $200$ \\
      e3 & Einstein & d1 & e3 & $300$ \\
      e4 & Turing   & d2 & e4 & $400$ \\
      e5 & Newton   & d3 & e1 & $100$ \\
      e6 & Euclid   & d2 & e7 & $150$ \\
      e7 & Hypatia  & d2 & e7 & $x$ \\
      \bottomrule
    \end{tabular}
    \hspace{3em}
    \begin{tabular}[c]{@{}>{\itshape}c|c>{\itshape}c@{}}
      \toprule
      \upshape\EntOb{Dept} & \Mor{name} & \upshape\Mor{sec} \\
      \midrule
      d1 & HR    & e3 \\
      d2 & Admin & e6 \\
      d3 & IT    & e5 \\
      \bottomrule
    \end{tabular}
    \caption{Example of an $\schema{S}$-instance $\inst{J}$.}
    \labelold{fig:Sinstance}
  \end{figure}

  All data required to determine an instance is encapsulated in the above two tables (image of the
  entity side and the attributes) along with a choice of $\Type$-algebra, which is generally
  infinite. Here, the $\Type$-algebra must include not only constants, but also all terms using the
  indeterminate $x:\Int$, which expresses Hypatia's unknown salary. Moreover, the equation
  $e:\EntOb{Emp}\vdash (e.\Mor{sal}\leq e.\Mor{mgr.sal})=\top$ in the presentation of $\schema{S}$
  implies that the terms $150\leq x$ and $\top$ must be equal in $J(\Bool)$ (by letting
  $e=\mathit{e6}$).

  Explicitly, we can define the functor $J\colon\scol{S}\to\Set$ as follows: the restriction
  $\iterms{J}=J\circ\inctypes$ to $\Type$ is the presented type algebra
  \begin{equation*}
    \iterms{J}\cong\FrAlg{x:\Int}/(150\leq x = \top),
  \end{equation*}
  and $J$ is defined on entities by the following sets:
  \begin{equation*}
    J(\EntOb{Emp})=\{\mathit{e1,e2,e3,e4,e5,e6,e7}\}, \quad
    J(\EntOb{Dept})=\{\mathit{d1,d2,d3}\},
  \end{equation*}
  and on edges and attributes by functions as shown in the table, e.g.\
  \begin{alignat*}{3}
    J(\Mor{wrk})\colon &J(\EntOb{Emp})\to J(\EntOb{Dept})
      &&\qquad\text{by}\qquad&&
      \mathit{e1}\mapsto\mathit{d3},\ldots,\mathit{e7}\mapsto\mathit{d2}\\
    J(\Mor{name})\colon &J(\EntOb{Dept})\to J(\Str)
      &&\qquad\text{by}\qquad&&
      \mathit{d1}\mapsto\mathrm{HR},\ldots,\mathit{d3}\mapsto\mathrm{IT}
  \end{alignat*}
\end{example}

\begin{example}
    \label{J'instance}
  Let $\schema{S}$ be as in \cref{Sschema}. Consider another $\schema{S}$-instance $\bar{\inst{J}}$,
  which is the same as $\inst{J}$ except that $\textit{e7}$ is removed from $\bar{J}(\EntOb{Emp})$
  and the restriction $\bar{\iterms{J}}=\bar{J}\circ\inctypes$ to $\Type$ is just $\kappa$, the
  algebra of constants as in \cref{ex:representable_algebras}. We will have use for both $\inst{J}$
  and $\bar{\inst{J}}$ later.
\end{example}

\begin{definition}
  We refer to instances whose $\Type$-algebra is initial, i.e.\ $\iterms{I}=\kappa$, as \emph{ground
  instances}. So $\inst{J}$ from \cref{Sinstance} is not a ground instance but $\bar{\inst{J}}$ from
  \cref{J'instance} is. If a $\Type$-algebra is presented by generators and relations, generators
  (such as the indeterminate value `$x$' of \cref{Sinstance}) are often referred to as
  \emph{labelled nulls} or \emph{Skolem variables} \cite{Abiteboul:1995a}. 
\end{definition}

Even though it wasn't defined that way, the category of instances $\sinst{S}$ can be seen to be the
category of algebras for an algebraic theory. Proving this, as we do next, immediately gives us
several nice properties of the category $\sinst{S}$.

\begin{proposition}
    \label{prop:SInst_as_algebras}
  For any schema $\schema{S}$, the category of instances $\sinst{S}$ is equivalent to the category
  of algebras for an algebraic theory.
\end{proposition}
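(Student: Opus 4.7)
The plan is to realize $\sinst{S}$ as the category of models of a suitable (many-sorted) algebraic theory $\mathcal{T}$, which I will construct as the free finite-product completion of $\scol{S}$ \emph{relative to} the inclusion $\inctypes\colon\Type\hookrightarrow\scol{S}$. Concretely, $\mathcal{T}$ will be the universal small category with finite products equipped with a functor $\eta\colon\scol{S}\to\mathcal{T}$ such that $\eta\circ\inctypes$ preserves finite products. One explicit realization is the full subcategory of the presheaf category $\Fun{\scol{S}^{\mathrm{op}},\Set}$ spanned by finite products of representables, modulo the identifications forced by the products already living in $\Type$; alternatively $\mathcal{T}$ can be presented by generators and relations (objects and morphisms of $\scol{S}$, formal finite products, together with equations making $\Type\hookrightarrow\mathcal{T}$ product-preserving).

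Next I would verify the universal property of $\mathcal{T}$: for any category $\mathcal{C}$ with finite products, precomposition with $\eta$ yields an equivalence
\begin{equation*}
    \mathrm{FP}(\mathcal{T},\mathcal{C})\;\xrightarrow{\ \simeq\ }\;\bigl\{F\colon\scol{S}\to\mathcal{C}\;:\;F\circ\inctypes\text{ preserves finite products}\bigr\},
\end{equation*}
where $\mathrm{FP}$ denotes finite-product preserving functors, and morphisms on both sides are natural transformations. Specializing to $\mathcal{C}=\Set$, the right-hand side is precisely the full subcategory of $\Fun{\scol{S},\Set}$ defined as $\sinst{S}$ in \cref{inst_collage}, so $\sinst{S}\simeq\mathrm{FP}(\mathcal{T},\Set)=\mathrm{Alg}(\mathcal{T})$.

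The main obstacle, to my mind, is executing the construction of $\mathcal{T}$ cleanly and proving its universal property: this is a standard relative free completion and can be handled either by Kelly's machinery for free completions under a class of limits, or more elementarily by quotienting the free finite-product completion of $\scol{S}$ by the congruence identifying its formal products over objects of $\Type$ with the products those objects already carry in $\Type$. Equivalently, one may view $(\scol{S},\text{products of}\ \Type)$ as a finite-product sketch and invoke the classical equivalence between such sketches and (many-sorted) algebraic theories. Once the universal property is in place, the equivalence $\sinst{S}\simeq\mathrm{Alg}(\mathcal{T})$ is immediate, and the standard properties of categories of algebras (completeness, cocompleteness, local presentability, monadicity) can then be transported to $\sinst{S}$, as alluded to in the paragraph preceding the proposition.
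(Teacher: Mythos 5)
Your proposal is correct and is essentially the paper's argument: the paper treats $\scol{S}$ as a finite-product sketch whose designated cones are the finite products of $\Type$ and cites the standard equivalence between models of such a sketch and algebras for the theory it generates, which is precisely the relative free finite-product completion whose universal property you propose to verify by hand. One small caveat: your suggested explicit realization of $\mathcal{T}$ as the finite products of representables inside $\Fun{\scol{S}^{\mathrm{op}},\Set}$ is not quite right in general, since the Yoneda embedding would also turn any products that happen to exist in $\scol{S}$ outside of $\Type$ into genuine products; the generators-and-relations (equivalently, sketch-theoretic) description you also offer is the correct one.
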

\begin{proof}
  Recall from \cref{def:algebra} the category of algebras for a theory.
  We can consider $\scol{S}$ as a finite-product sketch,
  whose designated product cones are all
  finite products in $\Type$. Then a model of this sketch is a functor $\scol{S}\to\Set$ which
  preserves finite products in $\Type$, i.e.\ an instance of $\schema{S}$. The category of models
  for any finite product sketch is equivalent to the category of algebras for an algebraic theory
  generated by the sketch; see e.g.\ \cite[\S 4.3]{Barr.Wells:1985a}.
\end{proof}

\begin{remark}
    \label{rmk:SInst_as_algebras}
  When given a presentation $(\Xi,E_\Xi)$ for a schema $\schema{S}$, as in
  \cref{def:schema_presentation}, we can make \cref{prop:SInst_as_algebras} much more concrete:
  combining \cref{prop:collage_free_product_completion,inst_collage}, it follows that there is an
  equivalence of categories $\sinst{S}\equiv(\Cxt{\scol{\Xi}}/E_{\scol{\Xi}})\alg$.
\end{remark}

\begin{corollary}
    \label{corr:SInst_cocomplete}
  For any schema $\schema{S}$, the category of instances $\sinst{S}$ has all small colimits.
\end{corollary}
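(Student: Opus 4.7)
The plan is to derive the corollary as an immediate consequence of \cref{prop:SInst_as_algebras}. That proposition exhibits $\sinst{S}$ as equivalent to the category of algebras for an algebraic theory, and since equivalences of categories preserve and reflect all (co)limits, it suffices to invoke the classical fact that the category of algebras for any algebraic theory --- equivalently, the category of models of any finite-product sketch in $\Set$ --- is cocomplete.

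The standard route I would cite runs as follows. Such a category is monadic over a (small) power of $\Set$ via the forgetful functor that evaluates an algebra on the generating objects; the induced monad is finitary, so the resulting category of algebras is locally finitely presentable, and in particular cocomplete. Concretely, filtered colimits are created by the forgetful functor to the underlying sets (since finite products commute with filtered colimits in $\Set$), while coproducts and coequalizers are constructed by taking the corresponding colimits in the ambient functor category $\Fun{\scol{S},\Set}$ and then reflecting into the full subcategory of finite-product-preserving functors on the type side via the left adjoint to the inclusion.

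Each ingredient here is well documented (e.g., in \cite{Barr.Wells:1985a}, which is already cited in the proof of \cref{prop:SInst_as_algebras}), so I do not expect any genuine obstacle. The only minor subtlety worth flagging is that cocompleteness is a statement about the full subcategory $\sinst{S}\subseteq\Fun{\scol{S},\Set}$, so one must verify that colimits in the functor category are compatible with the finite-product-preservation condition on $\iterms{I}$; this is exactly what the reflection into the algebra subcategory accomplishes and is the content of the standard proof of cocompleteness for sketch-model categories.
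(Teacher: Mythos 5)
Your proof is correct and follows essentially the same route as the paper, which simply combines \cref{prop:SInst_as_algebras} with its earlier result that categories of algebras for an algebraic theory admit all small colimits (\cref{thm:colimits_alg_theory}). The extra detail you supply (finitary monadicity over a power of $\Set$, reflection into the subcategory of product-preserving functors) is just an unpacking of the proof of that cited fact, not a different argument.
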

\begin{proof}
  This follows from \ref{prop:SInst_as_algebras} and \ref{thm:colimits_alg_theory}.
\end{proof}

\begin{remark}
    \label{rmk:SInst_colims}
  As in \cref{wrn:TAlg_colims}, we note that colimits in $\sinst{S}$ do not always agree with the
  pointwise colimits in $\Fun{\scol{S},\Set}$, which can make them difficult to work with.
  However, the following simple observation is sometimes useful:

  Let $X\colon\cat{D}\to\sinst{S}$ be a diagram and let $U(X)$ be its composite with the inclusion
  $\sinst{S}\to\Fun{\scol{S},\Set}$. If the colimit $\colim(UX)$ in $\Fun{\scol{S},\Set}$ lands in
  the subcategory $\sinst{S}$ (i.e.\ preserves products in $\Type$), then the natural map
  $\colim(UX)\to U(\colim X)$ is an isomorphism. In other words, in this case, the colimit $\colim
  X$ can be taken pointwise. (Note that this observation only uses the fact that $\sinst{S}$ is a
  full subcategory of $\Fun{\scol{S},\Set}$).
\end{remark}

\begin{example}\label{initial_schema}
  Let $\emptyset\in\Schema$ be the initial schema, i.e.\ the unique schema having an empty entity
  category. Then $\scol{\emptyset}\cong\Type$, thus there is an isomorphism of categories
  \[
    \sinst{\emptyset}\iso\TypeAlg.
  \]
\end{example}

\begin{remark}
    \label{representableinstance}
  Notice that for any schema $\schema{S}$, the Yoneda embedding
  $\yoneda\colon\op{\scol{S}}\to\Fun{\scol{S},\Set}$ is product-preserving and hence factors through
  the forgetful functor $\sinst{S}\to\Fun{\scol{S},\Set}$. The left factor
  $\op{\scol{S}}\to\sinst{S}$, which we also denote $\yoneda$, is fully faithful. In particular, for
  any object $s\in\scol{S}$, the representable functor $\yoneda(s)\colon\scol{S}\to\Set$ given by
  $\yoneda(s)(x)=\scol{S}(s,x)$ is an instance, called the \emph{$\schema{S}$-instance represented
  by $s$}.

  Because the functor $\sinst{S}\to\Fun{\scol{S},\Set}$ is fully faithful, it follows that the
  embedding $\yoneda\colon\op{\scol{S}}\to\sinst{S}$ is dense (see \cref{rmk:density}). In
  particular, for any instance $I\in\sinst{S}$, there is a canonical isomorphism of $\schema{S}$-instances
  \begin{equation*}
    I \iso \int^{s\in\scol{S}}I(s)\cdot\yoneda(s)
  \end{equation*}
which also follows from \cref{rmk:SInst_colims}.
\end{remark}

\subsection{Presentations of instances}
  \label{instancespresentation}

Let $(\Xi,E_\Xi)$ be a presentation of a schema $S$ (\cref{def:schema_presentation}) and
$\scol{\Xi}$ its associated algebraic signature, whose generated theory is the free product
completion of its collage $\scol{S}$ (\cref{prop:collage_free_product_completion}). By
\cref{rmk:SInst_as_algebras}, we can use presentations of algebras for a theory
(\cref{def:presented_algebra}) to give presentations of $\schema{S}$-instances.

\begin{definition}
    \label{def:instance_presentation}
  Let $\Gamma$ be a context over the above algebraic signature $\scol{\Xi}$. The free
  $(\Cxt{\scol{\Xi}}/E_{\scol{\Xi}})$-algebra $\FrAlg{\Gamma}$ corresponds under the equivalence of
  \cref{prop:SInst_as_algebras} to an $\schema{S}$-instance, which we denote $\FrInst{\Gamma}$, and
  call the \emph{free $\schema{S}$-instance generated by $\Gamma$}.

  If $E_{\Gamma}$ is a set of equations in context $\Gamma$, then we similarly write
  $\FrInst{\Gamma}/E_{\Gamma}$ for the $\schema{S}$-instance corresponding to
  $\FrAlg{\Gamma}/E_{\Gamma}$, and call it the $\schema{S}$-instance \emph{presented by
  $(\Gamma,E_{\Gamma})$}. Concretely, $\left(\FrInst{\Gamma}/E_{\Gamma}\right)(s)= {\{\text{terms
  in context $\Gamma$ of type $s\in\scol{S}$}\}}/{\sim}$, by \cref{rmk:term_alg}.
\end{definition}

\begin{example}
    \label{ex:representable_instance}
  For any object $s\in\scol{S}$, the representable instance $\yoneda(s)$ as in
  \cref{representableinstance} is free, with one generator of type $s$, i.e.\
  $\yoneda(s)\iso\FrInst{(x:s)}$.
\end{example}

\begin{remark}
    \label{rmk:canonical_presentation_inst}
  Similar to \cref{rmk:canonical_presentation_alg}, any given instance $\inst{I}$ has a canonical
  presentation, where for each $s\in\scol{S}$ and $x\in I(s)$ there is a generator $x:s$, and for
  each arrow $f\colon s\to s'$ in $\scol{S}$ with $y=I(f)(x)$, there is an equation $x:s\vdash
  x.f=y$. In this way, the presentation essentially records every entry of every column in every
  table.

  For example, the canonical presentation of $\inst{J}$ from \cref{Sinstance} has context and
  equations
  \begin{equation}
      \label{Jcanonicalpresentation}
    \begin{aligned}
      \Gamma &= (\mathit{e1,\dots,e7}:\EntOb{Emp},\,
                 \mathit{d1,d2,d3}:\EntOb{Dept},\,
                 x:\Int) \\
      E &= \{\mathit{e1}\Mor{.last} = \mathrm{Gauss},\,
             \mathit{e1}\Mor{.wrk}  = \mathit{d3},\,\ldots,\,
             \mathit{d3}\Mor{.sec}  = \mathit{e5} \}
    \end{aligned}
  \end{equation}
\end{remark}

\begin{example}
    \label{frozeninstance}
  Let $\schema{S}$ be as in \cref{Sschema}. We will now describe an $\schema{S}$-instance $\inst{I}$
  that is fairly different-looking than that in \cref{Sinstance} or \ref{J'instance}, in that the
  values of most of its attributes are non-constants. Instances like $\inst{I}$ play a central role
  in database queries (see \cref{Queries}).

  We specify the instance $\inst{I}$ by means of a presentation
  $\inst{I}=\FrInst{\Gamma}/E_{\Gamma}$, where $\Gamma=(e:\EntOb{Emp},d:\EntOb{Dept})$, and where
  $E_{\Gamma}$ contains the two equations
  \begin{equation}
      \label{eqs_for_I}
    \begin{aligned}
      \Gamma&\vdash e\Mor{.wrk.name}=\textrm{Admin} \\
      \Gamma&\vdash (e\Mor{.sal}\leq d\Mor{.sec.sal})=\top.
    \end{aligned}
  \end{equation}
  Thus for any entity or type $s\in\scol{S}$, the elements of $I(s)$ are the equivalence classes of terms
  $\Gamma\vdash t:s$ built out of edges and attributes from $\schema{S}$ and function symbols from
  $\Type$, modulo the equations $E_{\Gamma}$ as well as those from $\schema{S}$.

  We can picture this instance in the tables shown in \cref{fig:frozeninstance}.
  \begin{figure}
    \centering
    \small
    \begin{tabular}[t]{@{}>{$}c<{$}|>{$}c<{$}>{$}c<{$}>{$}c<{$}>{$}c<{$}@{}}
      \toprule
      \EntOb{Emp}         & \Mor{last}           & \Mor{wrk}           & \Mor{mgr}       & \Mor{sal}           \\
      \midrule
      e                   & e.\Mor{last}         & e.\Mor{wrk}         & e.\Mor{mgr}     & e.\Mor{sal}         \\
      e.\Mor{mgr}         & e.\Mor{mgr.last}     & e.\Mor{wrk}         & e.\Mor{mgr}     & e.\Mor{mgr.sal}     \\
      d.\Mor{sec}         & d.\Mor{sec.last}     & d.\Mor{sec.wrk}=d   & d.\Mor{sec.mgr} & d.\Mor{sec.sal}     \\
      d.\Mor{sec.mgr}     & d.\Mor{sec.mgr.last} & d                   & d.\Mor{sec.mgr} & d.\Mor{sec.mgr.sal} \\
      e.\Mor{wrk.sec}     & e.\Mor{wrk.sec.last} & e.\Mor{wrk}         & \dots           & \dots               \\
      e.\Mor{wrk.sec.mgr} & \dots                & \dots               & \dots           & \dots               \\
      \bottomrule
    \end{tabular} \\
    \vspace{1ex}
    \begin{tabular}[t]{@{}>{$}c<{$}|>{$}c<{$}>{$}c<{$}@{}}
      \toprule
      \EntOb{Dept} & \Mor{name}     & \Mor{sec}       \\
      \midrule
      d            & d.\Mor{name}   & d.\Mor{sec}     \\
      e.\Mor{wrk}  & \mathrm{Admin} & e.\Mor{wrk.sec} \\
      \bottomrule
    \end{tabular}
    \caption{Example of a presented $\schema{S}$-instance $\inst{I}$.}
    \labelold{fig:frozeninstance}
  \end{figure}
  On types, $I$ contains many terms, as in
  \begin{align*}
    I(\Str) &= \{e\Mor{.last},e\Mor{.mgr.last},d\Mor{.sec.last},\dots,
      d\Mor{.name},\mathrm{Admin},\dots,\mathrm{aaBcZ},\dots\} \\
    I(\Int) &= \{e\Mor{.sal},e\Mor{.mgr.sal},\dots,-d\Mor{.sec.sal},e\Mor{.sal}+e\Mor{.sal}+1,\dots,
      28734,\dots\} \\
    I(\Bool) &= \{\mathrm{eq}(e\Mor{.last},d\Mor{.name}),\dots,
      e\Mor{.mgr.sal}\leq d\Mor{.sec.sal},\dots,\top,\neg\top\}
  \end{align*}
  Note, for example, that the value of $e\Mor{.wrk.name}$ in the table $\EntOb{Dept}$ has been
  replaced by `Admin' because of an equation of $\inst{I}$ and that the value of
  $e\Mor{.wrk.sec.wrk}$ has been replaced by $e\Mor{.wrk}$ because of a path equation of
  $\schema{S}$ (see \cref{Sschema}).
\end{example}

\begin{example}
    \label{frozeninstancetransform}
  Having defined two $\schema{S}$-instances $\inst{J}$ and $\inst{I}$ in the examples
  \ref{Sinstance} and \ref{frozeninstance} above, we will now explicitly describe the set
  $\sinst{S}(\inst{I},\inst{J})$ of instance transforms between them.

  By \cref{rmk:SInst_as_algebras,rmk:canonical_presentation_alg}, the set of transforms
  $\inst{I}\to\inst{J}$ is equivalent to the set of morphisms of presentations from the presentation
  $(\Gamma,E_{\Gamma})$ of $\inst{I}$ to the canonical presentation of $\inst{J}$. Such a morphism
  of presentations is simply an assignment of an element $\epsilon\in J(\EntOb{Emp})$ to the
  generator $e:\EntOb{Emp}$ and an element $\delta\in J(\EntOb{Dept})$ to the generator
  $d:\EntOb{Dept}$, such that the two equations $\epsilon.\Mor{wrk.name}=\mathrm{Admin}$ and
  $(\epsilon.\Mor{sal}\leq\delta.\Mor{sec.sal})=\top$ are true in $\inst{J}$.

  Without the equations, there would be 21 assignments
  $(\epsilon,\delta)\in J(\EntOb{Emp})\times J(\EntOb{Dept})$. It is easy to check that
  only three of those 21 satisfy the two equations: $(\mathit{e6,d1})$, $(\mathit{e2, d1})$, and
  $(\mathit{e6, d2})$. For instance, the equation $e\Mor{.wrk.name}=\mathrm{Admin}$ means we must
  have $\epsilon.\Mor{wrk}=\mathit{d2}$. Similarly, the equation $\big((e\Mor{.sal})\leq
  d\Mor{.sec.sal}\big)=\top$ rules out several choices. For example, the assignment
  $(\epsilon,\delta)\coloneq(\mathit{e7,d1})$ is invalid because we cannot deduce that $x\leq 300$
  from any equations of $\inst{J}$ (where we only know that $150\leq x$).
\end{example}

\begin{example}
    \label{ex:presented_transform}
  We will now consider transforms between two instances that are both presented. As usual, let
  $\schema{S}$ be the schema of \cref{Sschema}, and let $\inst{I}$ be the instance from above,
  \cref{frozeninstance}. We recall its presentation $(\Gamma_{\schema{I}},E_{\schema{I}})$, as well
  as present a new $\schema{S}$-instance $\inst{I'}$:
  \begin{align*}
    \Gamma_{\schema{I'}} &= \{e':\EntOb{Emp}\}
    & \Gamma_{\schema{I}} &= \{e:\EntOb{Emp},d:\EntOb{Dept}\} \\
    E_{\schema{I'}} &=
      \begin{aligned}[t]
        \{&e'\Mor{.wrk.name}=\tn{Admin} \\
          &(e'\Mor{.sal}\leq e'\Mor{.wrk.sec.sal})=\top\}
      \end{aligned}
    & E_{\schema{I}} &=
      \begin{aligned}[t]
        \{&e\Mor{.wrk.name}=\tn{Admin} \\
          &(e\Mor{.sal}\leq d\Mor{.sec.sal})=\top\}
      \end{aligned}
  \end{align*}
  As in \cref{def:presented_algebra}, to give an instance transform
  $\beta\colon\inst{I'}\Rightarrow\inst{I}$, it is equivalent to give a context morphism
  $\Gamma_I\to\Gamma_{I'}$ (see
  \cref{def:context_morphism}) in the opposite direction which respects the equations. In this case,
  there are only two which satisfy the equations: $[e'\coloneq e]$ and $[e'\coloneq
  e.\Mor{wrk.sec}]$.
\end{example}

\subsection{Decomposing instances}
  \label{decomposing_instances}

While \cref{inst_collage} is how we most often consider instances, it will sometimes be useful to
consider their entity and attribute parts separately. Recall the left action $\otimes$
of \cref{def:otimes} on algebraic profunctors.

An instance $\inst{I}$ on a schema $\schema{S}$ is equivalently defined to be a tuple
$(\inodes{I},\iterms{I},\iatts{I})$, where
\begin{itemize}[nosep]
  \item $\inodes{I}\colon\singleton\tickar\snodes{S}$ is a profunctor, called the \emph{entity side}
    of $\inst{I}$,
  \item $\iterms{I}\colon\singleton\tickar\Type$ is an algebraic profunctor, called the \emph{type
    side} of $\inst{I}$, and
  \item $\iatts{I}\colon\inodes{I}\otimes\satts{S}\to\iterms{I}$ is a profunctor morphism, called the \emph{values assignment} for $\inst{I}$:
    \begin{equation*}
      \begin{tikzcd}[row sep=0em]
        & |[alias=domA]| \snodes{S} \ar[dr,bend left=15,tickx,"\satts{S}"] & \\
        \singleton \ar[ur,bend left=15,tick,"\inodes{I}"] \ar[rr,bend right=20,tickx,"\iterms{I}"' codA]
        && \Type
        \twocellA{\iatts{I}}
      \end{tikzcd}
    \end{equation*}
\end{itemize}
The functor $I\colon\scol{S}\to\Set$ of \cref{inst_collage}, viewed as $I\colon\singleton\tickar\scol{S}$,
can then be uniquely recovered by the lax limit universal property of $\scol{S}$
spelled out in \cref{proarrowsinoutcollage}, for $X=\singleton$.

Note that the entity side $\inodes{I}\colon\snodes{S}\to\Set$ is just a copresheaf, the type side
$\iterms{I}\colon\Type\to\Set$ is just a $\Type$-algebra, and the values assignment $\iatts{I}$ is
equivalent to a morphism $\int^{s\in\snodes{S}}\inodes{I}(s)\cdot\satts{S}(s) \to \iterms{I}$ of
$\Type$-algebras (where the coend is in $\Type\alg$, see \cref{thm:colimits_alg_theory}).
We could also obtain $I\colon\scol{S}\to\Set$ from collages universal property \cref{univpropcollages}
in $\dProf$.

Similarly, a transform $\inst{I}\to\inst{J}$ between instances can equivalently be defined in terms
of separate entity and type components $(\tnodes{\alpha},\tterms{\alpha})$, where
$\tnodes{\alpha}\colon\inodes{I}\Rightarrow\inodes{J}$ and
$\tterms{\alpha}\colon\iterms{I}\Rightarrow\iterms{J}$ are profunctor morphisms, satisfying the
equation:
\begin{equation*}
  \begin{tikzcd}
    \{*\} \ar[r,bend left=55,tick,"\inodes{I}" domA]
    \ar[r,tick,"\inodes{J}"' codA]
    \ar[rr,bend right=45,tickx,"\iterms{J}"' codB]
    & |[alias=domB]| \snodes{S} \ar[r,tickx,"\satts{S}"]
    & \Type
    \twocellA{\tnodes{\alpha}}
    \twocellB{\iatts{J}}
  \end{tikzcd}
  \quad = \quad
  \begin{tikzcd}[row sep=.6em,baseline=(B.base)]
    & |[alias=domA]| \snodes{S} \ar[dr,bend left=20,tickx,"\satts{S}"] & \\
    |[alias=B]| \{*\} \ar[ur,bend left=20,tick,"\inodes{I}"]
    \ar[rr,tickx,"\iterms{I}"' {codA,domB}]
    \ar[rr,bend right=45,tickx,"\iterms{J}"' codB]
    && \Type
    \twocellA{\iatts{I}}
    \twocellB[pos=.6]{\tterms{\alpha}}
  \end{tikzcd}
\end{equation*}
Given $\alpha\colon I\Rightarrow J\colon \scol{S}\to\Set$, the entity and type components
$\tnodes{\alpha}$ and $\tterms{\alpha}$ are simply the restrictions of $\alpha$ along the collage
inclusions $\incnodes{S}\colon\snodes{S}\to\scol{S}\from\Type\cocolon\inctypes$. In the other
direction, given $\tnodes{\alpha}$ and $\tterms{\alpha}$, one recovers $\alpha$ by the 2-dimensional
part of the universal property of \cref{proarrowsinoutcollage}.

\section{The fundamental data migration functors}
  \label{ssec:DataMigrationFunctors}

In this section, we describe functors that transfer instances from one schema to another. More
specifically, we show how any schema mapping $\map{F}\colon\schema{S}\to\schema{T}$ induces a system
of three adjoint functors
\begin{equation*}
  \begin{tikzcd}
    \sinst{T}\ar[rr,"{\Delta_F}"description] &&
    \sinst{S}\ar[ll,bend left=21,"{\Pi_F}","{\bot}"']
    \ar[ll,bend right=21,"{\Sigma_F}"',"{\bot}"]
  \end{tikzcd}
\end{equation*}
which we call \emph{data migration functors}. They are related to the usual Kan extensions setting
between categories of presheaves. Recall from \cref{sch_mapping} that a schema mapping
$\map{F}\colon\schema{S}\to\schema{T}$ is a functor $\mnodes{F}\colon\snodes{S}\to\snodes{T}$ and a
2-cell $\matts{F}$:
\[
  \begin{tikzcd}
    \snodes{S} \ar[r,tickx,"\satts{S}" domA] \ar[d,"\mnodes{F}"']
    & \Type \ar[d,equal] \\
    \snodes{T} \ar[r,tickx,"\satts{T}"' codA]
    & \Type
    \twocellA{\matts{F}}
  \end{tikzcd}
\]

\begin{definition}
    \label{Deltadef}
  Let $\map{F}\colon\schema{S}\to\schema{T}$ be a schema mapping, and let
  $\scol{F}\colon\scol{S}\to\scol{T}$ be the induced map on collages (\cref{collagefunctor}). We
  define a functor $\Delta_F\colon\sinst{T}\to\sinst{S}$ as follows:
  \begin{itemize}
  \item For any instance $\inst{I}$ of $\schema{T}$, define $\Delta_F(\inst{I})\coloneqq
    I\circ\mcol{F}$. By \cref{eq:mcol_diagrams} and \cref{inst_collage}, the following diagram
    commutes:
    \begin{equation*}
      \begin{tikzcd}
        & \Type \ar[dl,"\inctypes"'] \ar[d,"\inctypes"] \ar[dr,"\iterms{I}"] & \\
        \scol{S} \ar[r,"\mcol{F}"']
        & \scol{T} \ar[r,"I"']
        & \Set
      \end{tikzcd}
    \end{equation*}
    Thus $\iterms{\Delta_F(\inst{I})}=I\circ\mcol{F}\circ\inctypes=\iterms{I}$ preserves
    products.
    \item For any $\trans{\alpha}\colon\inst{I}\to\inst{J}$ in $\sinst{T}$, define
    $\Delta_F(\alpha)=\alpha\circ\mcol{F}$.
  \end{itemize}
  We call $\Delta_F$ the \emph{pullback functor} (along $F$).
\end{definition}

\begin{example}
	\label{ex:underlying_typealg}
  For any schema $\schema{S}$, the unique map $!\colon\emptyset\to\schema{S}$ from the initial
  schema (\cref{initial_schema}) induces a functor $\Delta_!\colon\sinst{S}\to\TypeAlg$,
  denoted $\Delta_{!_S}$. For an instance $\inst{I}$ of
  $\schema{S}$, this functor returns the underlying $\Type$-algebra of the instance,
  $\Delta_{!_S}(\inst{I})\cong\iterms{I}$.
\end{example}

A schema mapping $\map{F}$ can be considered as a map of finite product sketches; see
\cref{prop:SInst_as_algebras}. In general one does not expect the pullback functor $\Delta_F$
between the corresponding categories of algebras to have a right adjoint; for example, there is no
`cofree monoid' on a set. However, because $\map{F}$ restricts to the identity on the $\Type$-side of
the collage by \cref{collagefunctor}, we find that $\Delta_F$ does have a right adjoint, denoted
$\Pi_F$, which we call the \emph{right pushforward functor}.

\begin{proposition}
    \label{Pidef}
  Let $\map{F}\colon\schema{S}\to\schema{T}$ be a schema mapping. The right Kan extension
  $\Ran_{\mcol{F}}\colon$ $\Fun{\scol{S},\Set}\to\Fun{\scol{T},\Set}$ takes $\schema{S}$-instances
  to $\schema{T}$-instances, defining a right adjoint to $\Delta_F$,
  \[
    \Delta_F\colon\sinst{T}\leftrightarrows\sinst{S}\cocolon\Pi_F
  \]
\end{proposition}
\begin{proof}
  Let $I\colon\scol{S}\to\Set$ be any functor. Then for an object $x\in\scol{T}$, the right Kan
  extension is given by
  \begin{equation}\label{formulaPi}
  \begin{aligned}
    \Pi_F(I)(x) \coloneqq (\Ran_{\mcol{F}} I)(x)
    &\cong \BigFun{\scol{T},\Set}\big(\scol{T}(x,-),\Ran_{\mcol{F}}(I)\big) \\
    &\cong \BigFun{\scol{S},\Set}\big(\scol{T}(x,\mcol{F}-),I\big) \\
    &\cong \int_{s\in\scol{S}}\;\;{I(s)^{\scol{T}(x,\mcol{F}s)}}.
  \end{aligned}
  \end{equation}

  We will show that this formula preserves the property of $\iterms{I}$ being product-preserving. In
  fact, it preserves the $\Type$-algebra exactly, i.e.\ the diagram on the left commutes (up to
  natural isomorphism)
  \begin{equation}
      \label{eqn:same_typeside}
    \begin{tikzcd}[column sep=0em]
      {\BigFun{\scol{S},\Set}} \ar[rr,"\Ran_{\mcol{F}}"] \ar[dr,"\textrm{--}\circ\inctypes"']
      && {\BigFun{\scol{T},\Set}} \ar[dl,"\textrm{--}\circ\inctypes"] \\
      & {\BigFun{\Type,\Set}} &
    \end{tikzcd}
    \qquad
    \begin{tikzcd}
      \Type \ar[r,"\id"] \ar[d,"\inctypes"'] & \Type \ar[d,"\inctypes"] \\
      \scol{S} \ar[r,"\mcol{F}"'] & \scol{T}.
    \end{tikzcd}
  \end{equation}
  or equivalently, the pullback square on the right satisfies the Beck-Chevalley condition for right
  Kan extensions. The latter follows formally because the inclusion
  $\inctypes\colon\Type\to\scol{T}$ is an opfibration, but we can easily check the commutativity of
  the left diagram directly: for any $\tau\in\Type$, \cref{formulaPi} gives
  \begin{equation*}
    (\Ran_{\mcol{F}} I)(\tau)\iso\left[\scol{S},\Set\right]\left(\scol{T}(\tau,\mcol{F}\textrm{--}),I\right)
    \iso\BigFun{\scol{S},\Set}\left(\scol{S}(\tau,\textrm{--}),I\right)\iso I(\tau),
  \end{equation*}
  completing the proof.
\end{proof}

We now define the \emph{left pushforward functor}, denoted $\Sigma_F$.

\begin{proposition}
    \label{Sigmadef}
  For any schema mapping $\map{F}\colon\schema{S}\to\schema{T}$, the functor $\Delta_F$ has a left
  adjoint
  \[
    \Sigma_F\colon\sinst{S}\leftrightarrows\sinst{T}\cocolon\Delta_F.
  \]
  If $\inst{I}\in\sinst{S}$ is an instance, then $\Sigma_F(\inst{I})$ is given by the following
  coend taken in $\sinst{T}$:
  \begin{equation}\label{formulaSigma}
    \Sigma_F(\inst{I}) \cong \int^{s\in\scol{S}} I(s)\cdot\yoneda(\mcol{F}s),
  \end{equation}
  where $\yoneda(\mcol{F}s)$ is the representable $\schema{T}$-instance $\scol{T}(\mcol{F}s,-)$,
  see \cref{representableinstance}.

  In other words, $\Sigma_F$ is the left Kan extension
  \begin{equation}
      \label{eqn:sigma_extension}
    \begin{tikzcd}[column sep=5em]
      \op{\scol{S}} \ar[r,"\op{\mcol{F}}"] \ar[d,"\yoneda"']
      & \op{\scol{T}} \ar[d,"\yoneda"] \\
      \sinst{S} \ar[r,"\Sigma_F=\Lan_{\yoneda}(\yoneda\circ\mcol{F})"']
      & \sinst{T}
    \end{tikzcd}
  \end{equation}
  and the above square in fact commutes.
\end{proposition}
\begin{proof}
  The coend exists because $\sinst{T}$ is cocomplete (\cref{corr:SInst_cocomplete}). It is simple to
  check that this defines a left adjoint to $\Delta_F$:
  \begin{align*}
    \sinst{T}\left(\int^{s\in\scol{S}} I(s)\cdot\yoneda(\op{\mcol{F}}(s)), J \right)
      & \iso \int_{s\in\scol{S}} \sinst{T}\bigl( I(s)\cdot\yoneda(\op{\mcol{F}}(s)), J \bigr) \\
      & \iso \int_{s\in\scol{S}} \Set\bigl( I(s), \sinst{T}(\yoneda(\op{\mcol{F}}(s)),J) \bigr) \\
      & \iso \int_{s\in\scol{S}} \Set\bigl(I(s),J(\mcol{F}(s))\bigr) \\
      & \iso \BigFun{\scol{S},\Set}(I,J\circ\mcol{F}) = \sinst{S}(I,\Delta_F(J)).
  \end{align*}
  The square commutes since $\Sigma_F\colon\sinst{S}\to\sinst{T}$ is a pointwise Kan extension along the fully faithful $\yoneda$.
\end{proof}

\begin{remark}
    \label{SigmaIpresent}
  The coend \cref{formulaSigma} is not typically a pointwise colimit, as pointed out in
  \cref{rmk:SInst_colims}. Hence, unlike $\Pi$ or
  $\Delta$, given an object $t\in\scol{T}$ there is in general no explicit formula for computing the
  set $(\Sigma_F(I))(t)$.

  However, obtaining the presentation of $\Sigma(I)$ from a presentation of $I$ is almost trivial:
  if $\inst{I}$ is presented by a context $\Gamma=(x_1:s_1,...,x_n:s_n)$ and some equations, then
  $\Sigma_F(\inst{I})$ is presented by the context
  $F(\Gamma)=(x_1:\mcol{F}(s_1),...,x_n:\mcol{F}(s_n))$ and respective equations by applying
  $\mcol{F}$ to edges, attributes of the term expressions.
\end{remark}

\begin{remark}\label{PiDeltapreservetypes}
  For any schema mapping $\map{F}\colon\schema{S}\to\schema{T}$, one can check using
  \cref{eqn:same_typeside,Deltadef} that the functors $\Pi_F$ and $\Delta_F$ preserve $\Type$-algebras,
  in the sense that $\iterms{(\Pi_F I)}\iso\iterms{I}$ and $\iterms{(\Delta_FJ)}\iso\iterms{J}$.
  This does not generally hold for $\Sigma$; in \cref{prop:sigma_preserves_when} we give a simple
  criterion for when it does.
\end{remark}

\begin{example}
    \label{Sigmaexample}
  We will give an example of the application of the left pushforward functor
  $\Sigma_H\colon\sinst{S}\to\sinst{L}$ on $\inst{J}$ from \cref{Sinstance}, for a schema mapping
  $\map{H}\colon\schema{S}\to\schema{L}$ as follows:
  \begin{displaymath}
  \begin{tikzpicture}[schema]
    \node[entity]                            (Dept) {Dept};
    \node[entity,above=of Dept]              (Emp)  {Emp};
    \node[type,right=1 of Emp]               (Int)  {Int};
    \node[type,below=of Int]                 (Str)  {Str};
    \node[type,below right=.5 and .3 of Int] (Bool) {Bool};
    \path (Emp)   edge["wrk"'name=wrk,bend right] (Dept)
                  edge["mgr"name=mgr,loop above]  (Emp)
                  edge["sal"]                     (Int)
                  edge["last",inner sep=2pt]      (Str)
          (Dept)  edge["name"']                   (Str)
                  edge["sec"',bend right,
                       inner sep=3pt,pos=.4]      (Emp);
    \node[entity,right=2 of Bool]               (Team)  {Team};
    \node[entity,below right=.5 and .8 of Team] (Dept') {Dept};
    \node[entity,above=of Dept']                (Emp')  {Emp};
    \node[type,right=1 of Emp']                 (Int')  {Int};
    \node[type,below=of Int']                   (Str')  {Str};
    \node[type,below right=.5 and .3 of Int']   (Bool') {Bool};
    \path (Emp')  edge["on"',inner sep=2pt]          (Team)
                  edge["wrk"',bend right,
                       inner sep=2pt,pos=.4]         (Dept')
                  edge["mgr"name=mgr',loop above]    (Emp')
                  edge["sal"]                        (Int')
                  edge["last",inner sep=2pt]         (Str')
          (Team)  edge["bel",inner sep=2pt,
                       pos=.4]                       (Dept')
                  edge["col"'name=col,bend right=50,
                       inner sep=2pt]                (Str')
          (Dept') edge["sec"',bend right,
                       inner sep=2pt,pos=.4]         (Emp')
                  edge["name"]                       (Str');
    \path (col.south -| Str'.east) ++(9pt,0) node[anchor=south west,schema eqs] (Eqs') {
      $\Mor{mgr.on}=\Mor{on}$ \\
      $\Mor{on.bel}=\Mor{wrk}$ \\[1ex]
      plus eqs\\from \cref{Spicture}
    };
    \path (Eqs'.south -| Str.east) ++(9pt,0) node[anchor=south west,schema eqs] (Eqs) {
      plus eqs\\from \cref{Spicture}
    };
    \node[fit=(Team) (mgr') (Eqs') (col)] (Lbox) {};
    \node[schema name,below left=of Lbox.north east] {$\schema{L}$};
    \node[fit=(Eqs) (wrk) (mgr) (col.south -| Str)] (Sbox) {};
    \node[schema name,below left=of Sbox.north east] {$\schema{S}$};
    \draw[->,very thick] (Sbox) to node[above] {$\map{H}$} (Lbox);
  \end{tikzpicture}
  \end{displaymath}

  Schema $\schema{S}$ is as in \cref{Sschema}, and schema $\schema{L}$ has a new entity
  `\EntOb{Team}', thought of as grouping employees into teams, which have a color-name and belong to
  some department. The two new equations ensure that an employee is on the same team as their
  manager and that their team belongs to their department.

  The functor $\mcol{H}\colon\scol{S}\to\scol{L}$ is an inclusion, preserving labels
  ($\mcol{H}(\EntOb{Emp})=\EntOb{Emp}$, etc.). Thus, by \cref{SigmaIpresent}, we find that the
  presentation of $\Sigma_H(\inst{J})$ is exactly that of $\inst{J}$, shown in
  \cref{Jcanonicalpresentation}, only now interpreted as a $\schema{L}$-instance presentation. To
  calculate the $\schema{L}$-instance it presents, one follows the explanation from
  \cref{instancespresentation} (as we explain briefly below) and finds that $\Sigma_H(\inst{J})$ is
  given by the tables shown in \cref{fig:Sigmaexample},
  \begin{figure}
    \centering
    \small
    \begin{tabular}[c]{@{}>{\itshape}c|>{$}c<{$}>{\itshape}c@{}}
      \toprule
      \upshape\EntOb{Team} & \Mor{col} & \upshape\Mor{bel} \\
      \midrule
      t1 & \mathit{t1}.\Mor{col} & d3 \\
      t2 & \mathit{t2}.\Mor{col} & d2 \\
      t3 & \mathit{t3}.\Mor{col} & d1 \\
      t4 & \mathit{t4}.\Mor{col} & d2 \\
      \bottomrule
    \end{tabular}\qquad
    \begin{tabular}[c]{@{}>{\itshape}c|c>{\itshape}c>{\itshape}cc>{\itshape}c@{}}
      \toprule
      \upshape\EntOb{Emp} & \Mor{last} & \upshape\Mor{wrk} & \upshape\Mor{mgr} & \Mor{sal} & \upshape{\Mor{on}} \\
      \midrule
      e1 & Gauss    & d3 & e1 & $250$ & t1\\
      e2 & Noether  & d2 & e4 & $200$ & t2\\
      e3 & Einstein & d1 & e3 & $300$ & t3\\
      e4 & Turing   & d2 & e4 & $400$ & t2\\
      e5 & Newton   & d3 & e1 & $100$ & t1\\
      e6 & Euclid   & d2 & e7 & $150$ & t4\\
      e7 & Hypatia  & d2 & e7 & $x$   & t4\\
      \bottomrule
    \end{tabular}\qquad
    \begin{tabular}[c]{@{}>{\itshape}c|c>{\itshape}c@{}}
      \toprule
      \upshape\EntOb{Dept} & \Mor{name} & \upshape\Mor{sec} \\
      \midrule
      d1 & HR    & e3 \\
      d2 & Admin & e6 \\
      d3 & IT    & e5 \\
      \bottomrule
    \end{tabular}
    \caption{The left pushforward instance, $\Sigma_H(\inst{J})\in\sinst{L}$.}
    \labelold{fig:Sigmaexample}
  \end{figure}
  where $\mathit{t1,t2,t3,t4}$ are freely generated terms. The $\Type$-algebra of this instance
  is larger than that of $\inst{I}$; it includes, for example, new terms
  $\mathit{t1}\Mor{.col},\ldots,\mathit{t4}\Mor{.col}:\Str$.

  To calculate the set of rows in the \EntOb{Team} table, following \cref{def:instance_presentation}
  one freely adds a new team for each employee to be on, but quotients by setting each employees
  team equal to that of his or her manager, due to the equation
  $(\Mor{mgr.on}=\Mor{on}):\EntOb{Team}$ in schema $\schema{L}$. Notice how we have one team
  belonging to HR and one team belonging to IT, but two teams belonging to Admin. This basically
  results from the freeness of the construction and the fact that there are two different managers,
  Turing and Hypatia, in Admin. The colors assigned to these teams are freely assigned as
  indeterminate string values (e.g.\ $\mathit{t1}.\Mor{col}$) in those cells.
\end{example}

\begin{example}
    \label{Piexample}
  Recall the schema mapping $\map{G}\colon\schema{S}\to\schema{T}$ from \cref{sch_map_G}, which is
  given by the inclusion of the $\schema{S}$-presentation (\ref{Spicture}) into the
  $\schema{T}$-presentation (\ref{Tpicture}). We are going to describe the effect of the induced
  right pushforward functor $\Pi_G\colon\sinst{S}\to\sinst{T}$ on the $\schema{S}$-instance
  $\inst{J}$ of \cref{Sinstance}.

  The $\schema{T}$-instance $\Pi_G(J)\colon\scol{T}\to\Set$ is given by an ordinary right Kan
  extension, as expressed by formula (\ref{formulaPi}). Its $\Type$-algebra coincides with that of
  $\inst{J}$, namely it is the presented algebra $\FrAlg{x:\Int}/{\sim}$. Because $\map{G}$ is of a
  particularly simple form, the only thing that remains to compute is $\Pi_G(J)(\EntOb{QR})$, which
  is the following subset of $J(\EntOb{Emp})\times J(\EntOb{Dept})$:
  \begin{center}
    \small
    \begin{tabular}{@{}>{\itshape}c|>{\itshape}c>{\itshape}c@{}}
      \toprule
      \upshape\EntOb{QR} & \upshape\Mor{f} & \upshape\Mor{g} \\
      \midrule
      qr1  & e2 & d1 \\
      qr2  & e6 & d1 \\
      qr3  & e6 & d2  \\
      \bottomrule
    \end{tabular}
  \end{center}

  These three elements of the product are the ones that satisfy the supplementary equations of the
  presentation of $\schema{T}$ (i.e.\ \Mor{f.sal}$\leq$\Mor{g.sec.sal}=$\top$ and
  \Mor{f.wrk.name}=$\mathrm{Admin}$).  Its columns $\Pi_G(J)(\Mor{f})$ and $\Pi_G(J)(\Mor{g})$
  represent the respective projections to $\Pi_G(J)(\EntOb{Emp})=J(\EntOb{Emp})$ and
  $\Pi_G(J)(\EntOb{Dept})=J(\EntOb{Dept})$. As usual, the names $\mathit{qr1}$, $\mathit{qr2}$, and
  $\mathit{qr3}$ are not canonical; perhaps more canonical names would be $(\mathit{e2,d1})$,
  $(\mathit{e6,d1})$, and $(\mathit{e6,d2})$.
\end{example}

\begin{remark}
  One may notice that there is an isomorphism between the set $\Pi_G(J)(\EntOb{QR})$ from
  \cref{Piexample} and the set $\sinst{S}(\inst{I},\inst{J})$ from \cref{frozeninstancetransform}.
  The reason is that there is in fact an isomorphism of $\inst{S}$-instances
  $\scol{T}(\EntOb{QR},\mcol{G}-)\cong I$, as is most evident by observing the similarity between
  the defining equations \cref{eqs_for_S,eqs_for_I}.
\end{remark}

\begin{example}
    \label{Deltaexample}
  Recall the schema mapping $\map{F}\colon\schema{R}\to\schema{T}$ described in \cref{sch_map_F}.
  Here we will discuss the pullback $\Delta_F(\inst{K})$, where $\inst{K}:=\Pi_G(\inst{J})$ is
  computed in \cref{Piexample}. Briefly, the table presentation of $\inst{K}$ consists of the tables
  $\EntOb{Emp},\EntOb{Dept}$ as in \cref{Sinstance} and $\EntOb{QR}$ as in \ref{Piexample}, and its
  $\Type$-algebra is $\FrAlg{x:\Int}/{\sim}$.

  The $\schema{R}$-instance $\Delta_F(K)\colon\scol{R}\to\Set$ is obtained by pre-composing with
  $\mcol{F}$, as in \cref{Deltadef}. It has the same $\Type$-algebra (\cref{PiDeltapreservetypes}),
  and its one entity table is
  \begin{center}
    \small
    \begin{tabular}{@{}>{\itshape}c|c>{}c>{}cr@{}}
      \toprule
      \upshape\EntOb{A} & \Mor{emp\_last} & \Mor{dept\_name} & \Mor{diff} \\
      \midrule
      qr1 & Noether & HR    & $100$  \\
      qr2 & Euclid  & HR    & $150$  \\
      qr3 &  Euclid  & Admin & $0$   \\
      \bottomrule
  \end{tabular}
  \end{center}
\end{example}

We conclude this section with some special cases for which $\Sigma$ is nicely behaved.

\subsection{A pointwise formula for \texorpdfstring{$\Sigma$}{Sigma}}
    \label{sec:pointwise_sigma}

  Given an arbitrary mapping $\map{F}\colon\schema{S}\to\schema{T}$ and $\schema{S}$-instance
  $\inst{I}$, the formula for the functor $\Sigma_F(I)\colon\scol{T}\to\Set$ cannot be given
  pointwise on objects $t\in\scol{T}$. However, there is a special kind of schema mapping $F$ for
  which we can write a pointwise formula for $\Sigma_F(\inst{I})$, namely those which induce a
  \emph{discrete opfibration} on collages $\scol{F}\colon\scol{S}\to\scol{T}$. This occurs if and
  only if $\scol{F}$ arises via the Grothendieck construction applied to a functor $\partial
  F\colon\scol{T}\to\Set$, for which the composite $\partial
  F\circ\inctypes\colon\Type\to\scol{T}\to\Set$ is terminal. Note that in this case, we have a
  bijection
  \[
    \Ob\scol{S}\cong
      \left\{\,(t,p) \mathrel{}\middle\vert\mathrel{} t\in\scol{T},p\in\partial F(t)\,\right\}\cong
      \coprod_{t\in\scol{T}}\partial F(t).
  \]
  One can show using ends, the adjunction $\Sigma\dashv\Delta$, and the fact that
  $\sinst{S}\subseteq[\scol{S},\Set]$ is fully faithful, that $\Sigma_F$ is then given by the
  following pointwise formula:
  \[
    \Sigma_F(I)(t) = \smashoperator{\coprod_{p\in\partial F(t)}} I(t,p).
  \]
  In particular, $\Sigma_F$ preserves $\Type$-algebras in this case, i.e.\ $\Sigma_F(I)
  (\tau)=I(\tau)$ for any $\tau\in\Type$.

  It is easy to show that if $\scol{F}$ is a discrete opfibration, then $\matts{F}$ is cartesian, so the preservation of $\Type$-algebras can also be seen as a special case of the
  following result.

\begin{proposition}
    \label{prop:sigma_preserves_when}
  The left pushforward $\Sigma_F$ along a schema mapping $\map{F}=(\mnodes{F},\matts{F})\colon
  \schema{S}\to\schema{T}$ preserves type-algebras if and only if $\matts{F}$ is cartesian.
\end{proposition}
\begin{proof}[sketch]
  Consider the commutative square in $\Schema$ shown here:
  \[
  \begin{tikzcd}[column sep=0pt]
    &\emptyset\ar[dl,"!_R"']\ar[dr,"!_S"]\\
    \schema{R}\ar[rr,"\map{F}"']&&\schema{S}
  \end{tikzcd}
  \]
  By \cref{ex:underlying_typealg} it suffices to show that $\matts{F}$ is cartesian if and only if
  the restriction of the unit map $\Delta_{!_R}\eta\colon \Delta_{!_R} \to
  \Delta_{!_R}\Delta_F\Sigma_F = \Delta_{!_S}\Sigma_F$ coming from $\Sigma_F\dashv\Delta_F$ is an
  isomorphism. Both sides preserve colimits, so since $\yoneda$ is dense, $\Delta_{!_R}\eta$ is an
  isomorphism if and only if the components $\Delta_{!_R}(\eta_{\yoneda(r)})$ are isomorphisms for
  any $r\in\scol{R}$. For $\tau\in\Type$, $\eta_{\yoneda(\tau)}$ is always an isomorphism. For
  $r\in\scol{R}$, we have $\Delta_{!_R}\big(\yoneda(r)\big) = \satts{R}(r)$ and $\Delta_{!_S}
  \Sigma_F\big(\yoneda(r)\big) = \satts{S}\big(\mnodes{F}(r)\big)$ by \cref{eqn:sigma_extension}. It
  is not difficult to verify that $\Delta_{!_R}(\eta_{\yoneda(r)}) \colon
  \Delta_{!_R}\big(\yoneda(r)\big) \to \Delta_{!_S} \Sigma_F\big(\yoneda(r)\big)$ and the component
  $\satts{R}(r) \to \satts{S}\big(\mnodes{F}(r)\big)$ of $\matts{F}$ at $r$ agree, completing the proof.
\end{proof}

\section{The double category \texorpdfstring{$\Data$}{S}}\label{ThedoublecategoryData}

In this section, we will introduce the notion of a \emph{bimodule} between two schemas. We will see
that bimodules generalize instances on a schema, as well as \emph{queries}, which are the subject of
\cref{Queries}. We will show that schemas, schema mappings, and bimodules together form an
equipment, which we denote $\Data$. For database-style examples of material from this section, see
\cref{Queries}.

\subsection{Relevant terminology and notation}

Recall that companions and conjoints in $\dProf$ are given by representable profunctors,
as explained in \cref{ex:prof_double_cat}. Also recall from \cref{def:profunctor_products}
that a profunctor $M$ whose codomain is an algebraic
theory $\cat{T}$ is called algebraic if it is product-preserving on the right; it is denoted
$M\colon\cat{C}\tickxar\cat{T}$. If $\schema{S}$ is a schema, then the functor
$\inctypes\colon\Type\to\scol{S}$ denotes the inclusion of $\Type$ into the collage (\cref{collageschema}).

\subsection{Bimodules between schemas}

Bimodules admit several equivalent definitions, and it is convenient to be able to switch between
these definitions as best suits the task at hand. We will begin with the one which we use most often.

\begin{definition}
    \label{def:bimodule}
  Let $\schema{R}$ and $\schema{S}$ be database schemas. A \emph{bimodule} $\bimod{M} \colon
  \schema{R} \tickar \schema{S}$ is a functor
  $M\colon \op{\scol{R}} \to \sinst{S}$ such that the
  following diagram commutes:
  \begin{equation}\label{eqn:bimodule_form2}
    \begin{tikzcd}
      \op{\Type} \ar[r,"\op{\inctypes}"] \ar[d,"\op{\inctypes}"']
      & \op{\scol{S}} \ar[d,"\yoneda"] \\
      \op{\scol{R}} \ar[r,"M"']
      & \sinst{S}
    \end{tikzcd}
  \end{equation}
  or succinctly, $M(\tau)=\yoneda(\tau)$ for any $\tau\in\Type$.

  A morphism of $(\schema{R},\schema{S})$-bimodules $\twoCell{\phi}\colon\bimod{M}\to\bimod{N}$ is a natural
  transformation $\phi\colon M\Rightarrow N$ that restricts to the identity on $\Type$.
  We denote by $\Bimod{\schema{R}}{\schema{S}}$ the category of $(\schema{R},\schema{S})$-bimodules.
\end{definition}

\begin{remark}
    \label{rem:symmetric_version_bimodule}
  It is possible to give \cref{def:bimodule} in a more symmetric form. A bimodule
  $\bimod{M}\colon\schema{R}\tickar \schema{S}$ is equivalently a profunctor
  $\bcol{M}\colon\scol{R}\tickar\scol{S}$ between collages such that:
  \begin{itemize}[nosep]
  \item the composite profunctor $\scol{R}\xtickar{\bcol{M}}\scol{S}\xtickar{\conj{\inctypes}}\Type$
    is algebraic, and
  \item the composite profunctor $\Type\xtickar{\comp{\inctypes}}\scol{R}\xtickar{\bcol{M}}\scol{S}$
    is isomorphic to the representable $\comp{\inctypes}\colon\Type\tickar\scol{S}.$
  \end{itemize}

  A morphism of $(\schema{R},\schema{S})$-bimodules $\twoCell{\phi}\colon\bimod{M}\to\bimod{N}$ is
  equivalently a profunctor transformation $\twocol{\phi}\colon\bcol{M}\Rightarrow\bcol{N}$ such
  that $\comp{\inctypes}\odot\twocol{\phi} = \id_{\comp{\inctypes}}$.

  While this formulation of bimodules may be useful for intuition, we will primarily use
  \cref{def:bimodule} in this paper.
\end{remark}

\subsection{Adjoints $\Lambda$ and $\Gamma$}
  \label{lambda_gamma}

Considering a bimodule $\bimod{M}\colon\schema{R}\tickar\schema{S}$ as a functor
$\op{\scol{R}}\to\sinst{S}$, we can apply the left Kan extension along the Yoneda embedding
$\op{\scol{R}}\to\sinst{R}$; see \cref{representableinstance}. The result is denoted
$\Lambda_M\coloneqq\Lan_Y(M)$,
\begin{equation}\label{lambda_yoneda}
\begin{tikzcd}
  \op{\scol{R}}\ar[r,"M"]\ar[d,"\yoneda"']&\sinst{S}\\
  \sinst{R}\ar[ru,"\Lambda_M"']
\end{tikzcd}
\end{equation}
Since $\sinst{S}$ is cocomplete (\cref{corr:SInst_cocomplete}), we can express this using the Kan
extension formula (cf.\ \cref{eqn:LambdaNonPointwise})
\begin{equation}
    \label{lambdabimods}
  \begin{aligned}
    \Lambda_M(I) &= \int^{r\in\scol{R}} \sinst{R}\bigl(\yoneda(r),I\bigr)\cdot M(r) \\
                 &\iso \int^{r\in\scol{R}}I(r)\cdot M(r)
  \end{aligned}
\end{equation}
where $\cdot$ is the $\Set$-theoretic copower on $\sinst{S}$. Because the Yoneda embedding is fully
faithful, it follows that this Kan extension really is an extension,
i.e.\ \cref{lambda_yoneda} commutes. It also follows that $\Lambda_M$ ``preserves types,'' that is,
that the following diagram commutes:
\begin{equation}\label{eq:Lambda_preserves_types}
  \begin{tikzcd}[column sep=0em]
    & \op{\Type} \ar[dl,"\yoneda"'] \ar[dr,"\yoneda"] & \\
    \sinst{R} \ar[rr,"\Lambda_M"'] && \sinst{S}
  \end{tikzcd}
\end{equation}

A bimodule $\bimod{M}\colon\schema{R}\tickar\schema{S}$ also determines a functor in the other direction,
\begin{equation}\label{defGamma}
\begin{tikzcd}[row sep=.02in]
  \Gamma_M\colon\sinst{S}\ar[r] & \sinst{R} \\
  \qquad\quad J\ar[r,mapsto] & \sinst{S}(M(\textrm{--}),J).
\end{tikzcd}
\end{equation}
The condition \eqref{eqn:bimodule_form2} on $M$ implies that for any object $\tau\in\Type$,
\begin{equation}\label{eq:Gamma_preserves_types}
  (\Gamma_M J)(\tau)=\sinst{S}(M(\tau),J)=\sinst{S}(\yoneda(\tau),J)=J(\tau)
\end{equation}
from which it easily follows that $\Gamma_M(J)$ preserves products of types, hence defines an object
in $\sinst{R}$. We thus say that $\Gamma$ ``preserves
type-algebras'', in the sense that the following diagram commutes:
\[
  \begin{tikzcd}[column sep=0em]
    \sinst{S} \ar[rr,"\Gamma_M"] \ar[dr,"U"']
    && \sinst{R} \ar[dl,"U"] \\
    & \TypeAlg
  \end{tikzcd}
\]

\begin{proposition}
    \label{LambdaGammaadj}
  For any bimodule $\bimod{M}\colon\schema{R}\tickar\schema{S}$, the functor $\Lambda_M$ is left adjoint to
  $\Gamma_M$.
\end{proposition}
\begin{proof}
  This is simply a calculation:
  \begin{align*}
    \sinst{S}(\Lambda_MI,J)
    &= \sinst{S}\left( \int^{r\in\scol{R}}I(r)\cdot M(r), J \right) \\
    &\iso \int_{r\in\scol{R}} \sinst{S}\big(I(r)\cdot M(r), J\big) \\
    &\iso \int_{r\in\scol{R}} \Set\bigl(I(r), \sinst{S}(M(r),J) \bigr) \\
    &= \int_{r\in\scol{R}} \Set\bigl(I(r), (\Gamma_M J)(r) \bigr)
    \iso \sinst{R}(I,\Gamma_MJ).
  \end{align*}
  The first isomorphism follows because homs take colimits in their first variable to limits, while
  the second is the definition of copower.
\end{proof}

\begin{lemma}
    \label{lem:lambda_props}
  We collect here several easy but useful properties of $\Lambda$:
  \begin{enumerate}[nosep]
    \item For any schema $\schema{S}$, there is an isomorphism of functors $\Lambda_{\yoneda}\iso\id_{\sinst{S}}$.
    \item For any bimodule $M\colon\op{\scol{R}}\to\sinst{S}$ and any left adjoint
      $L\colon\sinst{S}\to\sinst{T}$, there is an isomorphism of functors $\Lambda_{L\circ M}\iso
      L\circ\Lambda_M$. In particular,
    \item For any bimodules $\bimod{M}\colon\schema{R}\tickar\schema{S}$ and
      $\bimod{N}\colon\schema{S}\tickar\schema{T}$, there is an isomorphism of functors
      $\Lambda_{\Lambda_N\circ M}\iso\Lambda_N\circ\Lambda_M$.
  \end{enumerate}
\end{lemma}
\begin{proof}
  Property 1 is simply the fact that $\yoneda$ is dense (see \cref{representableinstance}),
  while property 2 is the fact that left adjoints preserve colimits, hence preserve pointwise
  left Kan extensions. Finally, property 3 follows from property 2 using
  \cref{LambdaGammaadj}.
\end{proof}

\begin{remark}
  The $\Lambda_M\dashv\Gamma_M$ adjunction is an instance of the general geometric realization/nerve
  adjunction $\Set^{\op{\cat{S}}}\leftrightarrows\cat{C}$ induced by a functor
  $F\colon\cat{S}\to\cat{C}$ into a cocomplete category $\cat{C}$ (see e.g.\ \cite[pp.\
  244--245]{Leinster:2004a} or \cite{Nlab:nerve-and-realization}). In this case, $F$ is the functor
  $M\colon\op{\scol{R}}\to\sinst{S}$. The conditions in \cref{def:bimodule} guarantee that the nerve functor lands
  in the full subcategory $\sinst{R}\subseteq\Fun{\scol{R},\Set}$.
\end{remark}

\subsection{Equivalent definitions of bimodules}
  \label{LAdjRAdj}

In \cref{thm:bimod_equivalence} we give five equivalent definitions of bimodules, and we will give a
few others throughout the section, e.g.\ in
\cref{componentwisebimodule,prop:semidecomposed_bimodule,cor:another_equivalence}. The ones we
discuss here are aligned with the analogy presented in \cref{ssec:profunctor_matrix}, by which
profunctors between categories and linear transformations between vector spaces can be related. The
only complication here is that all of our structures must deal carefully with the algebraic theory
$\Type$, as we now make explicit.

Consider the coslice 2-category $\op{\Type}/\CCat$. An object is a pair $(\cat{C},F)$, where
$\cat{C}\in\CCat$ is a category and $F\colon\op{\Type}\to\cat{C}$ is a functor; a morphism
$(\cat{C},F)\to(\cat{D},G)$ is a functor $H\colon\cat{C}\to\cat{D}$ such that $H\circ F=G$; and a
2-cell $H\to H'$, where $H,H'\colon(\cat{C},F)\to(\cat{D},G)$, is a natural transformation
$\alpha\colon H\Rightarrow H'$ such that $\alpha F=\id_G$.

For any schema $\schema{S}$, both $\sinst{S}$ and $\op{\scol{S}}$ can be considered objects in
$\op{\Type}/\CCat$ (via $\yoneda$ and $\op{\inctypes}$). Similarly, $\sinst{S}$ can be considered an
object in the slice 2-category $\CCat/\TypeAlg$, where the functor $\sinst{S}\to\TypeAlg$ simply
sends an instance $\scol{S}\to\Set$ to its restriction along the inclusion
$\inctypes\colon\Type\to\scol{S}$.

\begin{theorem}
    \label{thm:bimod_equivalence}
  Let $\schema{R}$ and $\schema{S}$ be schemas. The following are equivalent:
  \begin{enumerate}
  \item The category $\Bimod{\schema{R}}{\schema{S}}$ of bimodules $\schema{R}\tickar\schema{S}$.
    \labelold[item]{item:bimod}
  \item The category $(\op{\Type}/\CCat)(\op{\scol{R}},\sinst{S})$.
    \labelold[item]{item:bimod_curried}
  \item The category of profunctors $\scol{R}\tickar\scol{S}$ satisfying the conditions of
  \cref{rem:symmetric_version_bimodule}.
  	\labelold[item]{item:remark_version}
  \item The category $\LAdj_{\Type}(\sinst{R},\sinst{S})$, which we define to be the full
    subcategory of $(\op{\Type}/\CCat)(\sinst{R},\sinst{S})$ spanned by left adjoint functors.
    \labelold[item]{item:LAdj}
  \item The category $\op{\RAdj_{\Type}(\sinst{S},\sinst{R})}$, whose opposite
  is defined to be the full subcategory of
    $(\CCat/\TypeAlg)(\sinst{S},\sinst{R})$
    spanned by right adjoint functors.
    \labelold[item]{item:RAdj}
  \end{enumerate}
\end{theorem}
\begin{proof}
  \Cref{item:bimod,item:bimod_curried} are equivalent by \cref{def:bimodule}, and it is easy to
  check the equivalence between \cref{item:bimod,item:remark_version}.

  For the equivalence of \cref{item:bimod_curried,item:LAdj}, consider the functor
  \[
    (\textrm{--}\circ\yoneda) \colon
      \LAdj_{\Type}(\sinst{R},\sinst{S}) \to (\op{\Type}/\Cat)(\op{\scol{R}},\sinst{S}).
  \]
  Its inverse is $\Lambda_{-}$, the left Kan extension along
  $\yoneda\colon\op{\scol{R}}\to\sinst{R}$, which lands in $\LAdj_{\Type}(\sinst{R},\sinst{S})$ by
  \cref{eq:Lambda_preserves_types,LambdaGammaadj}. To see that these are inverses, note that by
  commutative \cref{lambda_yoneda} we have $\Lambda_M\big(\yoneda(r)\big) =  M(r)$. For the other
  direction, we have by \cref{lem:lambda_props}
  \begin{equation*}
    \Lambda_{L\circ\yoneda} \iso L\circ\Lambda_{\yoneda}
      \iso L\circ\id_{\sinst{S}} = L.
  \end{equation*}

  Finally, we show that \cref{item:LAdj,item:RAdj} are equivalent. The equivalence
  $\LAdj(\sinst{R},\sinst{S})\equiv \op{\RAdj(\sinst{S},\sinst{R})}$ is standard, so we only need to
  show that this equivalence respects the restrictions concerning $\Type$. In one direction, if
  $L\colon \sinst{R}\to\sinst{S}$ is a left adjoint satisfying \cref{eq:Lambda_preserves_types},
  then we check that the right adjoint $G$ of $L$ satisfies \cref{eq:Gamma_preserves_types}:
  \begin{align*}
    (GJ)(\tau) &\iso \sinst{R}\big(\yoneda(\tau),GJ\big) \\
               &\iso \sinst{S}\Big(L\big(\yoneda(\tau)\big),J\Big) \\
               &\iso \sinst{S}\big(\yoneda(\tau),J\big) \\
               &\iso J(\tau). \\
    \intertext{Conversely, if $G\colon\sinst{S}\to\sinst{R}$ is a right adjoint satisfying
    \cref{eq:Gamma_preserves_types}, then}
    \mathllap{\sinst{S}\Big(L\big(\yoneda(\tau)\big),J\Big)} 
               &\iso \sinst{R}\big(\yoneda(\tau),RJ\big) \\
               &\iso (GJ)(\tau) \\
               &\iso J(\tau) \\
               &\iso \sinst{S}\big(\yoneda(\tau),J\big),
  \end{align*}
  hence by the Yoneda lemma, $L\big(\yoneda(\tau)\big)\iso\yoneda(\tau)$, for any $\tau\in\Type$.
\end{proof}

\begin{proposition}
    \label{prop:bimod_colimits}
  For any schemas $\schema{R}$ and $\schema{S}$, the category $\Bimod{\schema{R}}{\schema{S}}$ has
  finite colimits.
\end{proposition}
\begin{proof}
  The initial object of $\Bimod{\schema{R}}{\schema{S}}$ is given by the left Kan extension of the
  Yoneda embedding $\op{\Type}\to\sinst{S}$ along the collage inclusion $\op{\Type}\to
  \op{\scol{R}}$. Concretely, the initial bimodule $\bimod{0}$ can be described by cases:
  \[
    0(r,s) =
    \begin{cases}
      \scol{R}(r,s) & \text{if $s$ is a type} \\
      \emptyset & \text{otherwise.}
    \end{cases}
  \]
  To complete the proof, we need to show that $\Bimod{\schema{R}}{\schema{S}}$ has pushouts. By
  \cref{thm:bimod_equivalence}, $\Bimod{\schema{R}}{\schema{S}} \equiv
  (\op{\Type}/\CCat)(\op{\scol{R}},\sinst{S})$, and by \cref{corr:SInst_cocomplete}, $\sinst{S}$ is
  cocomplete. Let us fix a choice of pushouts in $\sinst{S}$, such that the chosen pushout of the
  constant span on an instance $I$ is $I$. Then it is easy to check that
  $\Bimod{\schema{R}}{\schema{S}}$ is closed under the induced chosen pointwise pushouts in
  $\CCat(\op{\scol{R}},\sinst{S})$, and that these are in fact pushouts in the subcategory
  $(\op{\Type}/\CCat)(\op{\scol{R}},\sinst{S})$.
\end{proof}

\subsection{The equipment \texorpdfstring{$\Data$}{Data}}
  \label{restrictionsofbimodules}

We are now ready to assemble schemas, schema morphisms, and bimodules into a single double category
$\Data$, which we define in \cref{def:Data}, and which we show to be an equipment in
\cref{prop:Data_is_equipment}. In order to define the double category structure, we will need the
easy notion of restriction of bimodules along schema morphisms.

Suppose we have a bimodule $\bimod{N}\colon\schema{R'}\tickar\schema{S'}$, and schema mappings
$\map{F}\colon\schema{R}\to\schema{R'}$ and $\map{G}\colon\schema{S}\to\schema{S'}$. Thinking of
$\bimod{N}$ as a functor $N\colon \op{\scol{R}'}\to\sinst{S'}$ as in \cref{def:bimodule}, we can
form the bottom composite
\[
\begin{tikzcd}
  \op{\scol{R}} \ar[d,"\op{\mcol{F}}"'] \ar[r,dashed,"\subsub{N}{F}{G}"]
  & \sinst{S} \\
  \op{(\scol{R}')} \ar[r,"N"']
  & \sinst{S'} \ar[u,"\Delta_G"']
\end{tikzcd}
\]
and define a bimodule $\bimod{\subsub{N}{F}{G}}\colon\schema{R}\tickar\schema{S}$ so that the square
commutes. This construction defines a functor
$\Bimod{F}{G}\colon\Bimod{\schema{R'}}{\schema{S'}}\to\Bimod{\schema{R}}{\schema{S}}$. By computing
the composite $\Delta_F\circ N\circ\op{\mcol{F}}$ on objects, it easily follows that
$\big(\subsub{N}{F}{G}(r)\big)(s)=\bcol{N}(\mcol{F}r,\mcol{G}s)$ for any $r\in\scol{R}$ and $s\in\scol{S}$,
where $\bimod{N}$ is viewed as $\bcol{N}\colon\scol{R'}\tickar\scol{S'}$. This is
relevant to \cref{compconjData}.

\begin{definition}
    \label{def:Data}
  We define the double category $\Data$ as follows: the objects of $\Data$ are schemas, the vertical
  morphisms are schema mappings, and the horizontal morphisms are bimodules. We define a 2-cell of
  the form
  \begin{equation}
      \label{eqn:2-cell}
    \begin{tikzcd}
      \schema{R} \ar[r,tick,"\bimod{M}" domA] \ar[d,"\map{F}"']
      & \schema{S} \ar[d,"\map{G}"] \\
      \schema{R}' \ar[r,tick,"\bimod{N}"' codA]
      & \schema{S}'
      \twocellA{\twoCell{\theta}}
    \end{tikzcd}
  \end{equation}
  to be a natural transformation $\theta\colon M\to\Delta_G\circ N\circ\op{\mcol{F}}$:
  \begin{equation*}
    \begin{tikzcd}
      \op{\scol{R}} \ar[r,"M" domA] \ar[d,"\op{\mcol{F}}"']
      & \sinst{S} \\
      \op{\scol{R'}} \ar[r,"N"' codA]
      & \sinst{S'}, \ar[u,"\Delta_G"']
      \twocellA{\theta}
    \end{tikzcd}
  \end{equation*}
  i.e.\ a morphism $\twoCell{\theta}\in\Bimod{\schema{R}}{\schema{S}}(\bimod{M},\bimod{\subsub{N}{F}{G}})$.
  Equivalently, it is a 2-cell $\twocol{\theta}\colon\bcol{M}\Rightarrow\bcol{N}$ in $\dProf$ with frames
  $\lframe(\twocol{\theta})=\mcol{F}$ and
  $\rframe(\twocol{\theta})=\mcol{G}$, and which has identity components on $r\in\Type$.

  Given bimodules $\bimod{M}\colon\schema{R}\tickar\schema{S}$ and
  $\bimod{N}\colon\schema{S}\tickar\schema{T}$, we define their composite $\bimod{M}\odot\bimod{N}$
  by
  \begin{equation}
      \label{bimodulecomposition}
    M\odot N\coloneqq \Lambda_N\circ M\colon\op{\scol{R}}\to\sinst{S}\to\sinst{T}.
  \end{equation}
  where $\Lambda_N$ is as defined in \cref{lambdabimods}.
  The unit bimodule $\unit_{\schema{R}}\colon\schema{R}\tickar\schema{R}$ for any schema
  $\schema{R}$ is given by the Yoneda embedding $\yoneda\colon\op{\scol{R}}\to\sinst{R}$, since
  $\Lambda_M\circ\yoneda\cong M$ by \cref{lambda_yoneda}. It corresponds to the unit in $\dProf$,
  $\twocol{\unit_{\schema{R}}}\coloneqq\unit_{\scol{R}}:\scol{R}\tickar\scol{R}$.

  The horizontal composition of 2-cells
  \begin{equation}
      \label{2cellcomp}
    \begin{tikzcd}
      \schema{R} \ar[r,tick,"\bimod{M}" domA] \ar[d,"\map{F}"']
      & \schema{S} \ar[r,tick,"\bimod{N}" domB] \ar[d,"\map{G}"]
      & \schema{T} \ar[d,"\map{H}"] \\
      \schema{R}' \ar[r,tick,"\bimod{M}'"' codA]
      & \schema{S}' \ar[r,tick,"\bimod{N}'"' codB]
      & \schema{T}'
      \twocellA{\twoCell{\theta}}
      \twocellB{\twoCell{\phi}}
    \end{tikzcd}
  \end{equation}
  is defined by the composition
  \[
    \begin{tikzcd}[column sep=0]
      \op{\scol{R}} \ar[rr,"M" domA] \ar[d,"\op{\mcol{F}}"']
      &[2.6em]&[-1.2em] |[alias=domB]| \sinst{S} \ar[dr,"\Sigma_G" pos=.6] \ar[rr,"\Lambda_N" domC]
      &[-1.2em]&[2.6em] \sinst{T} \\
      \op{\scol{R'}} \ar[r,"M'"' codA]
      & \sinst{S'} \ar[ur,"\Delta_G" pos=.4] \ar[rr,equal,""codB]
      && \sinst{S'} \ar[r,"\Lambda_{N'}"' codC]
      & \sinst{T'} \ar[u,"\Delta_H"']
      \twocellA{\theta}
      \twocellB[pos=.6]{}
      \twocellC{\Lambda_{\phi}}
    \end{tikzcd}
  \]
  where the middle triangle is the counit of the $\Sigma_G\dashv\Delta_G$ adjunction. Vertical
  composition of 2-cells works in the evident way.
\end{definition}

The data above satisfy the axioms of a double category as in \cref{def:double_cat}, with vertical
category $\Data_0=\Schema$ and horizontal
$\HHor(\Data)(\schema{R},\schema{S})=\Bimod{\schema{R}}{\schema{S}}$.

\begin{proposition}
    \label{prop:Data_is_equipment}
  The double category $\Data$ is an equipment.
\end{proposition}
\begin{proof}
  It is clear from the definition of 2-cells in $\Data$ that given a niche
\begin{equation*}
    \begin{tikzcd}
      \schema{R} \ar[d,"\map{F}"'] & \schema{S} \ar[d,"\map{G}"] \\
      \schema{R'} \ar[r,tick,"\bimod{N}"'] & \schema{S'}
    \end{tikzcd}
\end{equation*}
  there is a cartesian filler with the bimodule $\bimod{\subsub{N}{F}{G}}$ from
  \cref{restrictionsofbimodules} on top.
\end{proof}

\begin{remark}\label{compconjData}
We deduce that the companion and conjoint of a schema mapping $\map{F}\colon\schema{R}\to\schema{S}$
are the bimodules given by the following formulas:
\begin{equation}\label{eqn:compconj}
\begin{alignedat}{2}
\comp{F}&=\yoneda\circ\op{\scol{F}}\;&&:\;\op{\scol{R}}\to\op{\scol{S}}\to\sinst{S}\\
\conj{F}&=\Delta_F\circ \yoneda&&:\;\op{\scol{S}}\to\sinst{S}\to\sinst{R}
\end{alignedat}
\end{equation}
These bimodules turn out to be equivalent, via \cref{thm:bimod_equivalence}, to the companion and
conjoint of the induced $\mcol{F}\colon\scol{R}\to\scol{S}$ in the equipment $\dProf$ \cref{repprofs}.
Moreover, due to \cref{rmk:SInst_colims}, $\comp{F}\odot N\odot\conj{G}$ in $\Data$
coincides with $\hat{\tilde{F}}\odot\bcol{N}\odot\check{\tilde{G}}$ in $\dProf$,
even though the horizontal compositions differ. This will be put into a
larger context in \cref{rem:collage_double_functor}.
\end{remark}

Recall from \cref{defGamma} the definition of $\Gamma$, which is right adjoint to $\Lambda$ by \cref{LambdaGammaadj}.

\begin{proposition}
    \label{Dataisclosed}
  The equipment $\Data$ is right closed, with $\bimod{N}\rhd\bimod{P}\coloneqq \Gamma_N\circ P$.
\end{proposition}
\begin{proof}
  Let $\bimod{M}\colon\schema{R}\tickar\schema{S}$, $\bimod{N}\colon\schema{S}\tickar\schema{T}$ and
  $\bimod{P}\colon\schema{R}\tickar\schema{T}$. By \cref{closedequipment}, it is enough to establish
  a natural bijection $\Bimod{\schema{R}}{\schema{T}}(\bimod{M}\odot\bimod{N},\bimod{P})\iso
  \Bimod{\schema{R}}{\schema{S}}(\bimod{M},\bimod{N}\rhd\bimod{P})$. This follows directly from the
  $\Lambda_N\dashv\Gamma_N$ adjunction:
  \begin{align*}
    \Bimod{\schema{R}}{\schema{T}}(\bimod{M}\odot\bimod{N},\bimod{P})
    &= \sinst{T}^{\op{\scol{R}}}(\Lambda_N\circ M,P) \\
    &\iso \sinst{S}^{\op{\scol{R}}}(M,\Gamma_N\circ P) \\
    &= \Bimod{\schema{R}}{\schema{S}}(\bimod{M},\bimod{N}\rhd\bimod{P}),
  \end{align*}
  completing the proof.
\end{proof}

In the following, $\LAdj_{\Type}\subseteq\op{\Type}/\CCat$ and
$\RAdj_{\Type}\subseteq\CCat/\TypeAlg$
are the obvious sub-2-categories of the (co)slices described in \cref{LAdjRAdj}.

\begin{proposition}
    \label{lambdaofcomposites}
  There is a commutative diagram of pseudofunctors and bicategories, each of which is a local
  equivalence:
  \[
    \begin{tikzcd}[column sep=large, row sep=0pt]
      & \LAdj_{\Type} \ar[dd,"\simeq"] \\
      \HHor(\Data) \ar[ru,"\Lambda_-"] \ar[rd,"\Gamma_-"'] & \\
      & \op{\RAdj_{\Type}}
    \end{tikzcd}
  \]
\end{proposition}
\begin{proof}
  On objects, $\Lambda_{\textrm{--}}$ maps a schema
  $\schema{S}$ to the functor $\yoneda\colon\op{\Type}\to\sinst{S}$; on bimodules and 2-cells,
  it is was described in \cref{thm:bimod_equivalence}.
  Then for any bimodules
  $\bimod{M}\colon\schema{R}\tickar\schema{S}$ and $\bimod{N}\colon\schema{S}\tickar\schema{T}$, we have
  $\Lambda_{M\odot N} \coloneq \Lambda_{\Lambda_N\circ M} \iso \Lambda_N\circ\Lambda_M$ and
  $\Lambda_{\unit_{\schema{S}}} \coloneq \Lambda_{\yoneda} \iso \id_{\sinst{S}}$ by
  \cref{lem:lambda_props}.

  By checking that the coherence axioms are satisfied,
  this establishes that $\Lambda_-$ is a pseudofunctor. The result follows easily from there.
\end{proof}

The following lemma establishes a certain relationship
between $\Lambda,\Gamma$ and the data migration functors of \cref{ssec:DataMigrationFunctors}.
Recall that every schema mapping $\map{F}\colon\schema{S}\to\schema{T}$ induces a triple adjunction
as on the left below, and that every bimodule $\bimod{M}\colon\schema{S}\tickar\schema{T}$ induces
an adjunction as on the right:
\begin{equation*}
  \begin{tikzcd}
    \sinst{T}\ar[rr,"{\Delta_F}"description] &&
    \sinst{S}\ar[ll,bend left=21,"{\Pi_F}","{\bot}"']
    \ar[ll,bend right=21,"{\Sigma_F}"',"{\bot}"]
  \end{tikzcd}
  \qquad
  \begin{tikzcd}
    \sinst{T} \ar[r,shift right=1.5,"\Gamma_M"'] \ar[r,phantom,"\scriptstyle\bot"]
    & \sinst{S} \ar[l,shift right=1.5,"\Lambda_M"']
  \end{tikzcd}
\end{equation*}
Recall also that every schema mapping $\map{F}\colon\schema{S}\to\schema{T}$ induces a pair of
bimodules $\comp{F}\colon\schema{S}\tickar\schema{T}$ and
$\conj{F}\colon\schema{T}\tickar\schema{S}$.

\begin{lemma}\label{lambdaofcompanionconjoint}
For any schema mapping $\map{F}$, we have the following isomorphisms and adjunctions:
\[
(\Sigma_F\cong\Lambda_{\comp{F}})
\;\dashv\;
(\Delta_F\cong\Lambda_{\conj{F}}\cong\Gamma_{\comp{F}})
\;\dashv\;
(\Pi_F\cong\Gamma_{\conj{F}})
\]
\end{lemma}
\begin{proof}
The adjunctions are given in \cref{Sigmadef,Pidef}, so we provide the isomorphisms.
The companion and conjoint of schema mappings are given in \cref{eqn:compconj}.

For $\map{F}\colon\schema{S}\to\schema{T}$ and an instance
$\inst{I}\in\sinst{S}$, we have an isomorphism
\[
\Lambda_{\comp{F}}(I) = \int^{s\in\scol{S}}I(s)\cdot\comp{F}(s)\cong
\int^{s\in\scol{S}}I(s)\cdot\scol{T}(\mcol{F}s,-)\cong\Sigma_F(I)
\]
by \cref{lambdabimods}, \cref{compconjData} and \cref{formulaSigma}. For any $s\in\scol{S}$ we also have an isomorphism
\[
\Gamma_{\conj{F}}I(s) = \sinst{S}(\conj{F}(s),I)\cong\sinst{S}(\Delta_F(\yoneda s),I)\cong\Pi_FI(s)\]
by \cref{defGamma,compconjData,Pidef}. The remaining isomorphisms (for $\Delta$) follow
by \cref{LambdaGammaadj}.
\end{proof}

\subsection{Decomposing bimodules}
  \label{decomposing_bimodules}

Let $\bimod{M}\colon\schema{R}\tickar\schema{S}$ be a bimodule. Since $\dProf$ has extensive
collages by \cref{Profextensivecollages}, the respective profunctor
$\bcol{M}\colon\scol{R}\tickar\scol{S}$ determines an $(\satts{R},\satts{S})$-simplex in the sense
of \cref{def:simplices}: four profunctors
$M_{\tn{ee}}\colon\snodes{R}\tickar\snodes{S}$, $M_{\tn{et}}\colon\snodes{R}\tickar\Type$,
$M_{\tn{te}}\colon\Type\tickar\snodes{S}$, and $M_{\tn{tt}}\colon\Type\tickar\Type$, obtained via
the restriction of $M$ along the obvious inclusions, together with four 2-cells $M_{\tn{e}*}$,
$M_{\tn{t}*}$, $M_{*\tn{e}}$, and $M_{*\tn{t}}$.

The conditions of \cref{rem:symmetric_version_bimodule} force $M_{\tn{te}}$ to be the initial
profunctor (i.e.\ the constant functor $\op{\Type}\times\snodes{S}\to\Set$ with value the empty
set), $M_{\tn{tt}}$ to be the unit profunctor (i.e.\ the hom functor $\op{\Type}\times\Type\to\Set$)
and $M_{\tn{et}}$ to be algebraic. Because $M_{\tn{te}}$ is initial, and because tensor product of
profunctors preserves colimits, the 2-cells $M_{\tn{t}*}$ and $M_{*\tn{e}}$ are unique, and hence
don't need to be specified. Thus we have proven the following proposition, in which we let
$\bnodes{M}\coloneqq M_{\tn{ee}}$, $\bterms{M}\coloneqq M_{\tn{et}}$,
$\batts{M}\coloneqq M_{\tn{e}*}$, and $\bret{M}\coloneqq M_{*\tn{t}}$.%
\footnote{
  The mnemonic for $\bret{M}$ comes from its role as "return clause" in queries; see
  \cref{sec:query_bimodules}.
}

\begin{proposition}
    \label{componentwisebimodule}
  A \emph{bimodule} $\bimod{M}\colon\schema{R}\tickar\schema{S}$ is equivalent to a tuple
  $(\bnodes{M},\bterms{M},\batts{M},\bret{M})$, where $\bnodes{M}\colon\snodes{R}\tickar\snodes{S}$
  is a profunctor, $\bterms{M}\colon\snodes{R}\tickxar\Type$ is an algebraic profunctor, and
  $\batts{M}$ and $\bret{M}$ are profunctor morphisms
  \begin{equation}\label{pic_componentwisebimodule}
    \begin{tikzcd}[row sep=2ex,baseline=(B.base)]
      & |[alias=domA]| \snodes{S} \ar[dr,bend left=25,tickx,"\satts{S}"] & \\
      |[alias=B]| \snodes{R} \ar[ur,bend left=25,tick,"\bnodes{M}"]
      \ar[rr,tickx,"\bterms{M}"'{codA,codB}]
      \ar[rr,tickx,bend right=60,looseness=.9,"\satts{R}"' domB]
      && \Type
      \twocellA{\batts{M}}
      \twocellalt[pos=.4]{B}{\Uparrow\bret{M}}
    \end{tikzcd}
  \end{equation}
\end{proposition}

\begin{example}
    \label{ex:unit_comp_conj_bimod}
  For any schema $\schema{R}$, the unit bimodule $\unit_{\schema{R}}:\schema{R}\tickar\schema{R}$ is
  given by
  \begin{equation}\label{unitdecomposition}
    \begin{tikzcd}[row sep=1.5ex,column sep=3ex,baseline=(B.base)]
    & |[alias=domA]| \snodes{R} \ar[dr,bend left=20,tickx,"\satts{R}"] & \\
    |[alias=B]| \snodes{R} \ar[ur,bend left=20,tick,"\snodes{R}"]
    \ar[rr,tickx,"\satts{R}"'{codA,codB}]
    \ar[rr,tickx,bend right=60,looseness=.9,"\satts{R}"' domB]
    && \Type
    \twocellalt[pos=.6]{A}{\id}
    \twocellalt[pos=.4]{B}{\id}
    \end{tikzcd}
  \end{equation}
  and the companion/conjoint of a schema mapping
  $\map{F}=(\snodes{F},\satts{F})\colon\schema{R}\to\schema{S}$ decompose as
  \[
    \begin{tikzcd}[row sep=2ex]
      & |[alias=domA]| \snodes{S} \ar[dr,bend left=25,tickx,"\satts{S}"] & \\
      \snodes{R} \ar[ur,bend left=25,tick,"\snodes{\comp{F}}"]
      \ar[rr,bend left=6,tickx,"\snodes{\comp{F}}\odot\satts{S}"'{codA,codB}]
      \ar[rr,tickx,bend right=60,looseness=.9,"\satts{R}"' domB]
      && \Type
      \twocellA{\id}
      \twocellalt[pos=.4]{B}{\Uparrow\comp{\satts{F}}}
    \end{tikzcd}
    \qquad
    \begin{tikzcd}[row sep=2ex]
      & |[alias=domA]| \snodes{R} \ar[dr,bend left=25,tickx,"\satts{R}"] & \\
      \snodes{S} \ar[ur,bend left=25,tick,"\snodes{\conj{F}}"]
      \ar[rr,tickx,"\satts{S}"'{codA,codB}]
      \ar[rr,tickx,bend right=60,looseness=.9,"\satts{S}"' domB]
      && \Type
      \twocellA{\conj{\satts{F}}}
      \twocellalt[pos=.4]{B}{\Uparrow\id}
    \end{tikzcd}
  \]
  where 2-cells $\comp{\satts{F}}$, $\conj{\satts{F}}$ are as in \cref{sec:adjunction_reps} for
  $\dProf$. This is `component-wise' \cref{compconjData}.
\end{example}

The equivalence $\res{F}{G}$ (\cref{res_functor}) for the extensive collages equipment $\dProf$,
which on objects resulted in \cref{componentwisebimodule}, also gives an equivalent expression of a
2-cell $\twoCell{\theta}$ in $\Data$, viewed as $M\to\widehat{\tilde{F}}\odot N\odot\widecheck{\tilde{G}}$
inside $\HHor(\dProf)(\scol{S},\scol{R})$ (see \cref{compconjData}).

\begin{proposition}
    \label{decomposed_2cell}
  A 2-cell $\twoCell{\theta}$ in $\Data$ \eqref{eqn:2-cell} is equivalently a pair of profunctor
  morphisms
  \begin{equation*}
    \begin{tikzcd}
      \snodes{R} \ar[r,tick,"\bnodes{M}" domA] \ar[d,"\mnodes{F}"']
      & \snodes{S} \ar[d,"\mnodes{G}"] \\
      \snodes{R}' \ar[r,tick,"\bnodes{M'}"' codA]
      & \snodes{S}'
      \twocellA{\twonodes{\theta}}
    \end{tikzcd}
    \qquad
    \begin{tikzcd}
      \snodes{R} \ar[r,tickx,"\bterms{M}" domA] \ar[d,"\mnodes{F}"']
      & \Type \ar[d,equal] \\
      \snodes{R}' \ar[r,tickx,"\bterms{M'}"' codA]
      & \Type
      \twocellA{\twoterms{\theta}}
    \end{tikzcd}
  \end{equation*}
  satisfying the equations
  \begin{equation}\label{2cellaxioms}
  \begin{aligned}
    \begin{tikzcd}[row sep=1cm,ampersand replacement=\&]
      \snodes{R} \ar[r,tick,"\bnodes{M}" domA] \ar[d,"\mnodes{F}"']
      \& \snodes{S} \ar[r,tickx,"\satts{S}" domB] \ar[d,"\mnodes{G}" description]
      \& \Type \ar[d,equal] \\
      \snodes{R}' \ar[r,tick,"\bnodes{M'}" codA] \ar[rr,bend right=35,tickx,"\bterms{M'}"' codC]
      \& |[alias=domC]| \snodes{S}' \ar[r,tickx,"\satts{S}'" codB]
      \& \Type
      \twocellA{\twonodes{\theta}}
      \twocellB{\matts{G}}
      \twocellC[pos=.6]{\batts{M'}}
    \end{tikzcd} \quad &= \quad
    \begin{tikzcd}[row sep={},baseline=-9pt,ampersand replacement=\&]
      \& |[alias=domA]| \snodes{S} \ar[dr,bend left=15,tickx,"\satts{S}"] \& \\[between origins,1.5em]
      \snodes{R} \ar[ur,bend left=15,tick,"\bnodes{M}"]
      \ar[rr,bend right=10,tickx,"\bterms{M}"' {codA,domB}]
      \ar[d,"\mnodes{F}"']
      \&\& \Type \ar[d,equal] \\[1.8em]
      |[alias=B]| \snodes{R}' \ar[rr,tickx,"\bterms{M'}"' codB]
      \&\& \Type
      \twocellA{\batts{M}}
      \twocellB{\twoterms{\theta}}
    \end{tikzcd}
    \\
    \begin{tikzcd}[ampersand replacement=\&]
      \snodes{R} \ar[r,bend left=45,tickx,"\satts{R}" domA] \ar[r,tickx,"\bterms{M}"'{codA,domB}]
      \ar[d,"\mnodes{F}"']
      \& \Type \ar[d,equal] \\
      \snodes{R}' \ar[r,tickx,"\bterms{M'}"' codB]
      \& \Type
      \twocellA{\bret{M}}
      \twocellB{\twoterms{\theta}}
    \end{tikzcd}
    \quad &= \quad
    \begin{tikzcd}[ampersand replacement=\&]
      \snodes{R} \ar[r,tickx,"\satts{R}" domA] \ar[d,"\mnodes{F}"']
      \& \Type \ar[d,equal] \\
      \snodes{R}' \ar[r,tickx,"\satts{R}'"{codA,domB}]
      \ar[r,bend right=45,tickx,"\bterms{M'}"' codB]
      \& \Type
      \twocellA{\matts{F}}
      \twocellB{\bret{M'}}
    \end{tikzcd}
  \end{aligned}
  \end{equation}
\end{proposition}

\begin{corollary}
    \label{componentwisecartesian}
  A 2-cell $\twoCell{\theta}$ in $\Data$ is cartesian if and only if the 2-cells $\twonodes{\theta}$
  and $\twoterms{\theta}$ from \cref{decomposed_2cell} are cartesian in $\dProf$.
\end{corollary}

Because it will be convenient later, we now present yet another equivalent representation of
bimodules, which is in some sense intermediate between \cref{def:bimodule} and the completely decomposed
representation of \cref{componentwisebimodule}. Recall from \cref{ex:underlying_typealg}
that for any $\schema{S}$-instance $\inst{I}$, the underlying $\Type$-algebra is given by
$\iterms{I}=\Delta_{!_S}(\inst{I})$, where $!_S\colon\emptyset\to\schema{S}$ is the unique map.

\begin{proposition}
    \label{prop:semidecomposed_bimodule}
  A \emph{bimodule} $\bimod{M}\colon \schema{R}\tickar\schema{S}$ is equivalent to a functor
  $\bint{M}\colon \op{\snodes{R}}\to\sinst{S}$ together with a natural transformation
  \[
    \begin{tikzcd}[row sep=0em]
      \op{\snodes{R}} \ar[rr,bend left=20,"\satts{R}" domA] \ar[dr,bend right=15,"\bint{M}"'] && \TypeAlg \\
      & |[alias=codA]| \sinst{S} \ar[ur,bend right=15,"\Delta_{!_S}"'] &
      \twocellA{\bret{M}}
    \end{tikzcd}
  \]
\end{proposition}
\begin{proof}
The functor $\op{M}\colon\scol{R}\to\op{\sinst{S}}$, opposite to the one given in \cref{def:bimodule},
can equivalently be defined \dash using the universal property \cref{univpropcollages} of collages in
$\dProf$ \dash as a functor $\op{\bint{M}}\colon\snodes{R}\to\op{\sinst{S}}$, along with a natural
transformation
   \[
    \begin{tikzcd}[column sep=.6in]
      \snodes{R} \ar[r,tick,"\satts{R}" domA] \ar[d,"\op{\bint{M}}"']
      & \Type \ar[d,"\op{\yoneda}"] \\
      \op{\sinst{S}} \ar[r,tick,"\op{\sinst{S}}"' codA] & \op{\sinst{S}}.
      \twocellA{\bret{M}}
    \end{tikzcd}
  \]
since by definition, types are mapped to representables.
This natural family of functions $\satts{R}(r,\tau)\to\sinst{S}\left(\yoneda(\tau),\bint{M}(r)\right)$
equivalently define $\bret{M}$ as a natural transformation $\satts{R}\Rightarrow \Delta_{!_S}\circ\bint{M}$
by Yoneda: $\sinst{S}\big(\yoneda(\tau),\bint{M}(r)\big)\iso\bint{M}(r)(\tau)$.
\end{proof}

\subsection{Instances in terms of bimodules}
  \label{bimodinstance}

The category of instances on a schema $\schema{S}$ can be viewed entirely in terms of bimodules.
Indeed, if $\schema{U}=(\singleton,\kappa)$ is the unit schema from \cref{ex:schema_constants}, then
we have an isomorphism of categories \[\HHor(\Data)(\schema{U},\schema{S})\iso\sinst{S}.\] This
follows by comparing their decomposed forms \dash see
\cref{decomposing_instances,componentwisebimodule,decomposed_2cell} \dash and using the fact that
$\kappa\colon\singleton\tickar\Type$ is the initial $\Type$-algebra.

It also follows that $\Lambda_N(\textrm{--})$ is simply given by bimodule composition. Indeed, by
\cref{bimodulecomposition}, for any bimodule $\bimod{N}\colon\schema{S}\tickar\schema{T}$ and
$\schema{S}$-instance $\inst{J}$, considered as a $(\schema{U},\schema{S})$-bimodule, one has
\begin{equation}
   \label{Lambda_applied_dot}
  \Lambda_N(\inst{J}) \iso \inst{J}\odot\bimod{N}.
\end{equation}
Similarly, for any $\schema{T}$-instance $I$,
\begin{equation*}
  \Gamma_N(\inst{I}) \iso \inst{J}\rhd\bimod{N}.
\end{equation*}

\subsection{Data migration functors in terms of bimodules}\label{datamigrationinData}
  We can also recover the fundamental data migration functors from the structure of $\Data$, using
  \cref{lambdaofcompanionconjoint,Lambda_applied_dot}. That is, if we consider instances as bimodules
$\inst{I}\colon\schema{U}\tickar\schema{R}$ and $\inst{J}\colon\schema{U}\tickar\schema{S}$, then
composing and exponentiating them with companions and conjoints of $\map{F}\colon\schema{R}\to\schema{S}$
is equivalent to applying $\Sigma,\Delta,\Pi$:
\[
\Sigma_F(I)\cong\inst{I}\odot\comp{\map{F}},\qquad
\Delta_F(J)\cong\inst{J}\odot\conj{\map{F}}\cong\comp{\map{F}}\rhd\inst{J},\qquad
\Pi_F(I)\cong\conj{\map{F}}\rhd\inst{I}
\]

\subsection{Collages in $\Data$}\label{CollagesinData}
  We now consider collages (see \cref{def:collage}) in the proarrow equipment $\Data$.
  Using \cref{componentwisebimodule} and the fact that $\dProf$ has extensive collages
  (\cref{Profextensivecollages}), we can fully express a collage in $\Data$ in terms of profunctor
  collages.

  Let $\bimod{M}=(\bnodes{M},\bterms{M},\batts{M},\bret{M})\colon\schema{R}\tickar\schema{S}$ be a
  bimodule as in \cref{pic_componentwisebimodule}. Its collage will be a schema $\Col{M}$,
  together with two schema mappings $\schema{R}\to\Col{M}\from\schema{S}$ and a universal 2-cell
  $\twoCell{\mu}\colon\bimod{M}\Rightarrow\unit_{\Col{M}}$ in $\Data$. We begin by describing $\Col{M}$.

\subsection{The schema of a bimodule collage}\label{bimodulecollage}
  The entity category of the collage $\Col{M}$ is the collage of the profunctor
  $\bnodes{M}\colon\snodes{R}\tickar\snodes{S}$
  \[\Colnodes{M}\coloneqq\scol{\bnodes{M}},\]
  and the observables profunctor
  $\Colatts{M}\colon\scol{\bnodes{M}}\tickxar\Type$ is the one uniquely corresponding, via
  the universal property of the lax colimit $\scol{\bnodes{M}}$ (dual of
  \cref{proarrowsinoutcollage}), to the cocone
\begin{equation*}
\begin{tikzcd}[row sep=.15in,column sep=.7in]
    \snodes{S} \ar[dr,tickx,"\satts{S}"] & \\
    & |[alias=codA]| \Type \\
    \snodes{R}\ar[uu,tick,"\bnodes{M}"domA] \ar[ur,tickx,"\bterms{M}"'] &
    \twocellA[pos=.35]{\batts{M}}
  \end{tikzcd}
\end{equation*}
In simple words, the functor $\Colatts{M}\colon\op{\scol{\bnodes{M}}}\times\Type\to\Set$
is given by $\bterms{M}$ on the $\snodes{R}$-side of $\scol{\bnodes{M}}$,
by $\satts{S}$ on the $\snodes{S}$-side of $\scol{\bnodes{M}}$, and by $\batts{M}$ on the morphisms in between.
The profunctor $\Colatts{M}$ is algebraic, because $\bterms{M}$ and $\satts{S}$ are.

\subsection{The schema mappings of a bimodule collage}\label{i_Ri_Sconstruction}
We now define the collage inclusions
$\map{i_{\schema{R}}}\colon\schema{R}\to\Col{M}\from\schema{S}\cocolon\map{i_{\schema{S}}}.$ They are
schema mappings as in \cref{sch_mapping}, thus each consists of a functor between entity
categories and a 2-cell in $\dProf$. The functors between entity categories are the collage inclusions
from $\dProf$ (see \cref{ex:prof_collages}):
\[
\mnodes{(i_\schema{R})}\coloneqq i_{\snodes{R}}\colon\snodes{R}\to\scol{\bnodes{M}}
\qquad\tn{and}\qquad
\mnodes{(i_\schema{S})}\coloneqq i_{\snodes{S}}\colon\snodes{S}\to\scol{\bnodes{M}}.
\]
The 2-cells $\matts{(i_\schema{R})}$ and $\matts{(i_\schema{S})}$ in $\dProf$ are defined respectively as follows:
\begin{equation}\label{i_R&i_S}
\begin{tikzcd}[column sep=.6in,row sep=.4in]
\snodes{R} \ar[r,bend left=40,tickx,"\satts{R}" domA]
\ar[r,tickx,"\bterms{M}"'{codA,domB}]
\ar[d,"i_{\snodes{R}}"']
& \Type \ar[d,equal] \\
\scol{\bnodes{M}} \ar[r,tickx,"\Colatts{M}"' codB]
& \Type \twocellA{\bret{M}}
\twocellB{\tn{cart}}
\end{tikzcd}\qquad\textrm{ and }\qquad
\begin{tikzcd}[column sep=.6in,row sep=.4in]
\snodes{S} \ar[r,tickx,"\satts{S}" domA] \ar[d,"i_{\snodes{S}}"']
& \Type \ar[d,equal] \\
\scol{\bnodes{M}} \ar[r,tickx,"\Colatts{M}"' codA] & \Type
\twocellA{\tn{cart}}
\end{tikzcd}
\end{equation}
The fact that the indicated 2-cells are cartesian follows by definition of $\Col{M}$ as a
lax colimit; see \cref{prop:collage_lax_limit}.

\subsection{The 2-cell of a bimodule collage}\label{mu_construction}
We now define the 2-cell $\twoCell{\mu}$ in $\Data$
\begin{equation}\label{Datamu}
\begin{tikzcd}[column sep=.6in,row sep=.4in]
\schema{R} \ar[r,tick,"\bimod{M}" domA] \ar[d,"\map{i_\schema{R}}"'] &
\schema{S} \ar[d,"\map{i_\schema{S}}"] \\
\Col{M} \ar[r,tick,"\Col{M}"' codA] & \Col{M}
\twocellA{\twoCell{\mu}}
\end{tikzcd}
\end{equation}
in its decomposed form (see \cref{decomposed_2cell}) to be the pair $\twoCell{\mu}=(\twonodes{\mu},\twoterms{\mu})$
  \begin{displaymath}
    \begin{tikzcd}[column sep=.6in,row sep=.4in]
      \snodes{R} \ar[r,tick,"\bnodes{M}"domA] \ar[d,"i_{\snodes{R}}"']
      & \snodes{S} \ar[d,"i_{\snodes{S}}"] \\
      \scol{\bnodes{M}} \ar[r,tick,"\scol{\bnodes{M}}"'codA]
      & \scol{\bnodes{M}}
      \twocellA{\twonodes{\mu}}
    \end{tikzcd}
    \qquad
    \begin{tikzcd}[column sep=.6in,row sep=.4in]
      \snodes{R} \ar[r,tickx,"\bterms{M}"domA] \ar[d,"i_{\snodes{R}}"']
      & \Type \ar[d,equal] \\
      \scol{\bnodes{M}} \ar[r,tickx,"\Colatts{M}"'codA]
      & \Type
      \twocellA{\twoterms{\mu}}
    \end{tikzcd}
  \end{displaymath}
where $\twonodes{\mu}$ is the universal 2-cell for $\scol{\bnodes{M}}$ in $\dProf$ (see
\cref{ex:prof_collages}) and $\twoterms{\mu}$ is the (cartesian) square shown to
the left in \cref{i_R&i_S}. The components $\twonodes{\mu}$ and $\twoterms{\mu}$
satisfy the equations \cref{2cellaxioms} by the unit bimodule decomposition \cref{unitdecomposition}
and the universal property of lax colimit \cref{1-dim_univ_prop}.

\begin{proposition}\label{Datanormalcollages}
The equipment $\Data$ has normal collages.
\end{proposition}
Note that $\Data$ does not have extensive collages. In particular, $i_R$ is not in general fully
faithful.
\begin{proof}
We must first verify that $\Data$ has collages, i.e.\ that the 2-cell $\twoCell{\mu}$ defined in \cref{Datamu}
has the required universal property \cref{univpropcollages}.
Suppose that $\schema{X}$ is a schema and that $\twoCell{\phi}\colon\bimod{M}\Rightarrow\unit_\schema{X}$ is a
2-cell from $\bimod{M}$ to the unit bimodule. We must show that $\twoCell{\phi}$ factors uniquely through $\twoCell{\mu}$:
\begin{displaymath}
 \begin{tikzcd}
    \schema{R} \ar[r,tick,"\bimod{M}"domA] \ar[d,"\map{F}"']
    & \schema{S}  \ar[d,"\map{G}"]  \\
    \schema{X} \ar[r,tick,"\schema{X}"'codA] & \schema{X}
    \twocellA{\twoCell{\phi}}
  \end{tikzcd}
  \quad = \quad
  \begin{tikzcd}[column sep =.45in]
\schema{R} \ar[r,tick,"\bimod{M}" domA] \ar[d,"\map{i_\schema{R}}"'] &
\schema{S} \ar[d,"\map{i_\schema{S}}"] \\
\Col{M} \ar[r,tick,"\Col{M}"'{codA,domB}] \ar[d,"\map{\bar{\phi}}"']
& \Col{M} \ar[d,"\map{\bar{\phi}}"] \\
\schema{X} \ar[r,tick,"\schema{X}"'codB] & \schema{X}
\twocellA{\twoCell{\mu}}
\twocellB[pos=.6]{\map{\bar{\phi}}}
  \end{tikzcd}
\end{displaymath}
We work with components, writing $\twoCell{\phi}=(\twonodes{\phi},\twoterms{\phi})$ as in
\cref{decomposed_2cell}. Firstly, since $\twonodes{\mu}$ is the universal 2-cell
for a collage in $\dProf$, we
have that $\twonodes{\phi}=\unit_{\twonodes{\bar{\phi}}}\circ\twonodes{\mu}$
for a unique functor
$\twonodes{\bar{\phi}}\colon\scol{\bnodes{M}}\to\snodes{X}$.
Also, $\twoterms{\phi}=\twoterms{\bar{\phi}}\circ\twoterms{\mu}$ as in
\[
\begin{tikzcd}[column sep=.55in]
	\snodes{R}\ar[r,tickx,"\bterms{M}" domA]\ar[d,"\mnodes{F}"']&\Type\ar[d,equal]
	\\
	\snodes{X}\ar[r,tickx,"\satts{X}"' codA]&\Type
	\twocellA{\twoterms{\phi}}
\end{tikzcd}
\quad = \quad
\begin{tikzcd}[column sep=.55in]
	\snodes{R}\ar[r,tickx,"\bterms{M}" domA]\ar[d,"i_{\snodes{R}}"']
	&\Type\ar[d,equal]
	\\
	\scol{\bnodes{M}}\ar[r,tickx,"\Colatts{M}"' {codA,domB}]\ar[d,"\twonodes{\bar{\phi}}"']
	&\Type\ar[d,equal]
	\\
	\snodes{X}\ar[r,tickx,"\satts{X}"' codB]
	&\Type
	\twocellA{\twoterms{\mu}}
	\twocellB{\twoterms{\bar{\phi}}}
\end{tikzcd}
\]
for a unique 2-cell $\twoterms{\bar{\phi}}$, obtained via the 2-dimensional universal
property of the lax colimit $\scol{\bnodes{M}}$ (\cref{{prop:collage_lax_limit}}).
This profunctor morphism $\twoterms{\bar{\phi}}\colon\Colatts{M}\Rightarrow
\twonodes{\comp{\bar{\phi}}}\odot\satts{X}$ is concretely defined, omitting the details,
by natural components
\begin{align*}
(\twoterms{\bar{\phi}})_{{r\tau}} &=  \Colatts{M}(r,\tau)\simrightarrow\bterms{M}(r,\tau)
\xrightarrow{(\twoterms{\phi})_{r\tau}}
\satts{X}(\mnodes{F}r,\tau)=\satts{X}(\twonodes{\bar{\phi}}r,\tau) \\
(\twoterms{\bar{\phi}})_{{s\tau}} &=  \Colatts{M}(s,\tau)\simrightarrow\satts{S}(s,\tau)
\xrightarrow{(\matts{G})_{s\tau}}\satts{X}(\mnodes{G}s,\tau)=\satts{X}(\twonodes{\bar{\phi}}s,\tau)
\end{align*}
for $r\in\snodes{R}$, $s\in\snodes{S}$ and $\tau\in\Type$.
Defining $\twoCell{\bar{\phi}}$ to be the pair $(\unit_{\twonodes{\bar{\phi}}},\twoterms{\bar{\phi}})\colon
\Col{M}\to\schema{X}$, we have $\twoCell{\phi}=\unit_{\map{\bar{\phi}}}\circ\twoCell{\mu}$ as desired.

Moreover, collages in $\Data$ are normal, as in \cref{normalcollages}, since
the 2-cell $\twoCell{\mu}$ constructed in \cref{mu_construction} is cartesian: by \cref{componentwisecartesian},
it is enough that $\twonodes{\mu}$ and $\twoterms{\mu}$ are cartesian liftings in $\dProf$.
\end{proof}

\begin{remark}
	\label{rem:collage_double_functor}
Although we will not use this fact, we note that the
collage correspondences from
\cref{collageschema,collagefunctor,rem:symmetric_version_bimodule}
provide a \emph{lax double functor}
(see e.g.~\cite{Grandis.Pare:2004a})
\[\scol{(-)}\colon\Data\to\dProf.\]
This functor is only lax, because for bimodules
$\schema{R}\xtickar{\bimod{M}}\schema{S}\xtickar{\bimod{N}}\schema{T}$
in $\Data$, the natural map $\mcol{M}\odot\mcol{N}\to\mcol{M\odot N}$
in $\dProf$ given by the unique transformation in $[\scol{T},\Set]$
$$\int^{s\in\scol{S}}\bcol{M}(r,s)\times\bcol{N}(s,-)\Rightarrow
\int^{s\in\scol{S}}M(r)(s)\cdot N(s)$$
between the pointwise colimit \cref{eqn:coendComp} and the $\sinst{T}$-one \cref{bimodulecomposition},
is not an isomorphism (\cref{rmk:SInst_colims}).
Since lax double functors between equipments automatically preserve cartesian liftings
(see~\cite[Prop.~6.4]{Shulman:2008a}),
this fact also explains \cref{compconjData}.
\end{remark}

\subsection{Bimodules in terms of data migration}
We will now see that any bimodule, considered as an adjoint functor on instance categories
via \cref{thm:bimod_equivalence}, is equivalent to a composite of data migration functors.

\begin{corollary}\label{Gamma=DeltaPie}
Let $\bimod{M}\colon\schema{R}\tickar\schema{S}$ be a bimodule, and let
${\map{i_\schema{R}}}\colon\schema{R}\to\Col{M}$ and ${\map{i_\schema{S}}}\colon\schema{S}\to\Col{M}$ be the
collage inclusions. We have isomorphisms
\begin{displaymath}
\Lambda_M\cong\Delta_{\incnodes{\schema{S}}}\circ\Sigma_{\incnodes{\schema{R}}}
\quad\tn{and}\quad
\Gamma_M\cong\Delta_{\incnodes{\schema{R}}}\circ\Pi_{\incnodes{\schema{S}}}
\end{displaymath}
\end{corollary}
\begin{proof}
Since $\Data$ has normal collages, \cref{Datamu} is a cartesian 2-cell,
hence $\bimod{M}\cong\comp{i}_{\schema{R}}\odot\unit_{\Col{M}}\odot\conj{i}_{\schema{S}}$.
Therefore, by \cref{lambdaofcompanionconjoint,lambdaofcomposites}, we have
\begin{align*}
\Lambda_M & \cong\Lambda_{\comp{i}_{\schema{R}}\odot \unit_{\Col{M}}\odot\conj{i}_{\schema{S}}}\cong
\Lambda_{\conj{i}_{\schema{S}}}\circ\Lambda_{\comp{i}_{\schema{R}}}
\cong\Delta_{\incnodes{\schema{S}}}\circ\Sigma_{\incnodes{\schema{R}}}
\end{align*}
and dually for the right adjoint $\Gamma_M$.
\end{proof}

\begin{corollary}\label{cor:another_equivalence}
Suppose that $\schema{R}$ and $\schema{S}$ be schemas. Let $\cat{L}\subseteq\LAdj_{\Type}(\sinst{R},\sinst{S})$ \emph{[resp.}
let $\cat{R}\subseteq\RAdj_{\Type}(\sinst{S},\sinst{R})$\emph{]} denote the
full subcategory spanned by functors of the form $\Delta_G\circ\Sigma_F$ \emph{[resp.} of the form
$\Pi_G\circ\Delta_F$\emph{]}. This inclusion is an equivalence of categories.
\end{corollary}
\begin{proof}
The inclusion functor is fully faithful by definition and essentially surjective by \cref{Gamma=DeltaPie,thm:bimod_equivalence}.
\end{proof}

\begin{remark}
  \Cref{cor:another_equivalence} says that every right adjoint between instance categories is
  naturally isomorphic to a right pushforward followed by a pullback, $\Delta\circ\Pi$. While we do
  not discuss the details here, there is a similar characterization of \emph{parametric right
  adjoints} (a.k.a.\ \emph{local right adjoints}) between instance categories.

  Recall that a functor $F\colon\cat{C}\to\cat{D}$ is a parametric right adjoint if, for each object
  $c\in\cat{C}$, the slice $F/c\colon\cat{C}/c\to\cat{D}/(Fc)$ is a right adjoint, see e.g.\
  \cite{Weber:2007a}. In our setting, one can show that every parametric right adjoint between
  instance categories is naturally isomorphic to a functor of the form
  $\Sigma_{\tn{dopf}}\circ\Delta\circ\Pi$, where the left pushfoward is along a discrete
  op-fibration, as discussed in \cref{sec:pointwise_sigma}. This generalizes the analogous fact for
  parametric right adjoints between presheaf categories, as shown in \cite[Remark
  2.12]{Weber:2007a}.
\end{remark}

\subsection{Bimodules presentation}
We conclude \cref{ThedoublecategoryData} by discussing presentations of bimodules,
which work very similarly to presentations of profunctors
(\cref{def:profunctor_presentation}). Recall also the definition of
schema presentations, \cref{def:schema_presentation}. Suppose that
  $\Type\iso\Cxt{\Sigma}/E_{\Sigma}$ has algebraic signature $\Sigma$;%
\footnote{Signatures $\Sigma$ should not be confused with data migration functors $\Sigma_-$.
}
 see \cref{def:alg_theory_presentation}.

\begin{definition}
  Let $\schema{R}$ and $\schema{R'}$ be schemas given respectively by presentations $(\Xi,\eqnodes{E},\eqatts{E})$
  and $(\Xi',\eqnodes{E'},\eqatts{E'})$. These present entity category $\snodes{R}\iso\Fr(G_{\Xi})/\eqnodes{E}$ and observables profunctor
  $\satts{R}\iso\FrAlg{\Upsilon_{\Xi}}/\eqatts{E}$, and similarly for $\schema{R'}$.

  A \emph{bimodule signature} $\Omega=(\bnodes{\Omega},\batts{\Omega})$ from $\Xi$ to $\Xi'$ is a pair where
  $\bnodes{\Omega}$ is a profunctor signature from $G_{\Xi}$ to $G_{\Xi'}$, and $\batts{\Omega}$ is a profunctor
  signature from $G_{\Xi}$ to $\Sigma$.

  A bimodule signature has an associated algebraic signature
  $\scol{\Omega}=(S_{\scol{\Omega}},\Phi_{\scol{\Omega}})$, where
  \begin{align*}
    S_{\scol{\Omega}} &= (G_{\Xi})_0 \sqcup (G_{\Xi'})_0 \sqcup S_{\Sigma} \\
    \Phi_{\scol{\Omega}} &= (G_{\Xi})_1 \sqcup (G_{\Xi'})_1 \sqcup \Phi_{\Sigma} \sqcup
      \Upsilon_{\Xi} \sqcup \Upsilon_{\Xi'} \sqcup \bnodes{\Omega} \sqcup \batts{\Omega}.
  \end{align*}

  Say that a set $E_{\Omega}$ of equations over $\scol{\Omega}$ is a set of \emph{bimodule
  equations} if for each equation $\Gamma\vdash (t_1=t_2):s'$ of $E_{\Omega}$, the context is a
  singleton $\Gamma=(x:s)$ with $s\in(G_{\Xi})_0$ and $s'\in(G_{\Xi'})_0\sqcup S_{\Sigma}$. We can
  partition the set $E_{\Omega}=\eqnodes{(E_{\Omega})}\sqcup\eqatts{(E_{\Omega})}$, where $\eqnodes{(E_{\Omega})}$ contains
  all equations where $s'\in(G_{\Xi'})_0$, and $\eqatts{(E_{\Omega})}$ contains all equations where $s'\in
  S_{\Sigma}$.

  Given a pair $(\Omega,E_{\Omega})$, consider the category $\Cxt{\scol{\Omega}}/E_{\scol{\Omega}}$,
  where
  \[
    E_{\scol{\Omega}} = \eqnodes{E}\cup\eqatts{E}\cup\eqnodes{E'}\cup\eqatts{E'}\cup E_{\Omega}.
  \]
  The bimodule $M=\FrAlg{\Omega}/E_{\Omega}$ \emph{presented by $(\Omega,E_{\Omega})$} is defined as
  follows:
  \begin{itemize}[nosep]
    \item for any objects $r\in\scol{R}$ and $s\in\scol{R'}$, the set $M(r,r')$ is defined to be the hom-set
      $(\Cxt{\scol{\Omega}}/E_{\scol{\Omega}})(r,s)$,
    \item the functorial actions are given by substitution.
  \end{itemize}
\end{definition}

\begin{remark}\label{collagebimodule}
  The presented bimodule $M=\FrAlg{\Omega}/E_{\Omega}$ may be easier to understand in terms of its
  collage. We will write $\col{\Omega}=(G_{\col{\Omega}},\Upsilon_{\col{\Omega}})$ for the following
  schema presentation:
  \begin{align*}
    (G_{\col{\Omega}})_0 &= (G_{\Xi})_0\sqcup(G_{\Xi'})_0 \\
    (G_{\col{\Omega}})_1 &= (G_{\Xi})_1\sqcup(G_{\Xi'})_1\sqcup\bnodes{\Omega} \\
    \Upsilon_{\col{\Omega}} &= \Upsilon_{\Xi}\sqcup\Upsilon_{\Xi'}\sqcup\batts{\Omega}.
  \end{align*}
  It is easy to see that the algebraic signature $\scol{\col{\Omega}}$ corresponding to the schema
  signature $\col{\Omega}$ as in \cref{def:schema_presentation} is precisely the same as the
  signature $\scol{\Omega}$ given above. Moreover, the collage $\Col{M}$ of the bimodule $M$ is
  presented by $(\col{\Omega},\eqnodes{E}\cup\eqnodes{E'}\cup\eqnodes{(E_{\Omega})},\eqatts{E}\cup\eqatts{E'}
  \cup\eqatts{(E_{\Omega})})$.

  The inclusions $\map{i_\schema{R}}$ and $\map{i_\schema{S}}$ of the schemas $\schema{R}$ and $\schema{S}$ into the collage
  $\Col{M}$ are also easy to understand in terms of this presentation, as they are both inclusions
  on the level of generators and equations as well.
\end{remark}

\begin{example}
  Let $\map{F}=(\mnodes{F},\matts{F})\colon\schema{R}\to\schema{S}$ be a schema morphism. Both its
  companion $\comp{\map{F}}$ and its conjoint $\conj{\map{F}}$ have very simple presentations.

  The generators of $\comp{\map{F}}\colon\schema{R}\tickar\schema{S}$ are
  $\psi_r\colon r\to\mnodes{F}(r)$ for each $r\in\snodes{R}$. For each edge
  $f\colon r\to r'$ in $\schema{R}$ there is an equation $x.f.\psi_{r'}=x.\psi_r.\mnodes{F}(f)$,
  and for each attribute $\EntOb{att}\colon r\to\tau$ in $\schema{R}$ there is an equation
  $x.\EntOb{att}=x.\psi_r.\matts{F}(\EntOb{att})$, both in context $(x:r)$.

  The generators of $\conj{\map{F}}\colon\schema{S}\tickar\schema{R}$ are
  $\phi_r\colon\mnodes{F}(r)\to r$ for each $r\in\snodes{R}$. For each edge $f\colon r\to r'$ in
  $\schema{R}$ there is an equation $x.\phi_r.f=x.\mnodes{F}(f).\phi_{r'}$, and for each attribute
  $\EntOb{att}\colon r\to\tau$ in $\schema{R}$ there is an equation
  $x.\phi_r.\EntOb{att}=x.\matts{F}(\EntOb{att})$, both in context $(x:\mnodes{F}(r))$.
\end{example}

\section{Queries and uber-queries}\label{Queries}

In this section, we will employ many of the concepts and operations studied so far in order to
describe the process of \emph{querying} an algebraic database. We will also give examples that tie
in with running examples from previous sections.

A query is a question asked of a database, such as "Tell me the set of employees whose manager is
named Alice". Queries are often written using "Select-From-Where" \dash e.g.\ in the
database query language SQL \dash or equivalently using "For-Where-Return"
syntax. This syntax both poses the question and provides a table layout in which to record the
results.

In our current setting, we will express a query on a given $\schema{S}$-instance $\inst{J}$, by
constructing a new schema $\schema{R}$ and a bimodule $\bimod{M}\colon\schema{R}\tickar\schema{S}$.
Running the query will amount to applying the functor $\Gamma_M\colon\sinst{S}\to\sinst{R}$ from
\cref{defGamma}. Classically, a For-Where-Return query returns a single table (with no foreign keys),
so the result schema
$\schema{R}$ has a very specific form; namely, its entity side is the terminal category,
$\snodes{R}=\singleton$.

If we allow arbitrary $\schema{R}$ and arbitrary bimodules $\schema{R}\tickar\schema{S}$, the
"query" $\Gamma_M$ could be thought of as a method of migrating data from $\schema{S}$ to
$\schema{R}$, but it could also be considered as a collection of queries and homomorphisms between them;
we refer to such a setup as an \emph{uber-query}. We will discuss this interpretation of bimodules in \cref{sec:uberqueries}.

\subsection{Queries} We begin by discussing the usual For-Where-Return queries and how they appear in our setup.

\begin{definition}
    \label{def:query}
  Let $\schema{S}$ be a schema given by a presentation $(\Xi,E)$, see \cref{def:schema_presentation}.
  A \emph{query} on   $\schema{S}$ is a 4-tuple $\query{Q}=(\qfor{Q},\qwhere{Q},\qatts{Q},\qret{Q})$, where
  $\qfor{Q}$ is a context over $\scol{\Xi}$, $\qwhere{Q}$ is a
  set of equations in $\qfor{Q}$, $\qatts{Q}$ is a context over (the signature of) $\Type$, and
  $\qret{Q}\colon\qfor{Q}\to\qatts{Q}$ is a context morphism over $\scol{\Xi}$.
\end{definition}

We will adopt the For-Where-Return notation for presenting the data of a query as defined in \cref{def:query}, as follows:
\begin{center}
  \small
  \begin{tabular}{@{}rl@{}}
    \textsf{FOR:} & $\qfor{Q}$ \\
    \textsf{WHERE:} & $\qwhere{Q}$ \\
    \textsf{RETURN:} & $\qret{Q}$
  \end{tabular}
\end{center}
This notation is sometimes called \emph{flower syntax} (an acronym of For-Let-Where-Return) or  \emph{comprehension syntax} \cite{Abiteboul:1995a}. 
 

\begin{example}
    \label{ex:query_syntax}
  Let $\schema{S}$ be the schema from \cref{Sschema}. We give an example query $\query{Q}$ on
  $\schema{S}$:
  \begin{center}
    \small
    \begin{tabular}{@{}rl@{}}
      \textsf{FOR:} & $e:\EntOb{Emp}$, $d:\EntOb{Dept}$ \\
      \textsf{WHERE:} & $e\Mor{.wrk.name}=\mathrm{Admin}$,\\
      &$(e\Mor{.sal}\leq d\Mor{.sec.sal})=\top$ \\
      \textsf{RETURN:} 
          &$\Mor{emp\_last}\coloneqq e\Mor{.last}$ \\
          &$\Mor{dept\_name}\coloneqq d\Mor{.name}$ \\
          &$\Mor{diff}\coloneqq d\Mor{.sec.sal}-e\Mor{.sal}$
    \end{tabular}
  \end{center}
  In this query, $\qfor{Q}$ is the context $(e:\EntOb{Emp},d:\EntOb{Dept})$, $\qwhere{Q}$ is the set
  containing the two equations at the \textsf{WHERE} clause, $\qatts{Q}$ is the context
  $(\Mor{emp\_last}:\Str,\Mor{dept\_name}:\Str,\Mor{diff}:\Int)$, and
  $\qret{Q}\colon\qfor{Q}\to\qatts{Q}$ is the context morphism (\cref{def:context_morphism})
  displayed in the \textsf{RETURN} clause.
\end{example}

\subsection{Query bimodules}\label{sec:query_bimodules}

Any query $\query{Q}$ on a schema $\schema{S}$ gives rise to a schema $\schema{R}$ and bimodule
$\bimod{M}\colon\schema{R}\tickar\schema{S}$. The schema $\schema{R}$ is free on the schema
signature $(G,\Upsilon)$, where $G$ is the graph with one node $*$ and no
edges, and $\Upsilon$ has one function symbol, i.e.\ generating attribute $x\colon{*}\to\tau$, for each variable $x:\tau$
in $\qatts{Q}$. Note that the entity category $\snodes{R}$ is terminal, hence
$\satts{R}\colon\snodes{R}\tickar\Type$ can be identified with a single $\Type$-algebra, the free
algebra $\satts{R}=\FrAlg{\qatts{Q}}$. We may refer to $\schema{R}$ as the \emph{result schema}.

Using \cref{prop:semidecomposed_bimodule}, the data of any $\bimod{M}\colon \schema{R}
\tickar \schema{S}$ is equivalent to a single $\schema{S}$-instance $\bint{M}(*)$ denoted
$\bint{M}$, together with a morphism of $\Type$-algebras $\bret{M}\colon \FrAlg{\qatts{Q}}
\to \iterms{(\bint{M})}$. Equivalently, by $\Sigma_{!_S}\dashv\Delta_{!_S}$ this
is a morphism of $\schema{S}$-instances $\bret{M}\colon \Sigma_{!_S}\FrAlg{\qatts{Q}}
= \FrInst{\qatts{Q}} \to \bint{M}$, using \cref{SigmaIpresent} and \cref{def:instance_presentation}.

We thus define $\bint{M}=\FrInst{\qfor{Q}}/\qwhere{Q}$, precisely presented by the first two clauses
of the flower syntax, while $\bret{M}$ is given by the
context morphism $\qret{Q}$ of the last clause (see \cref{def:presented_algebra}).
Following standard database theory,
we refer to $\bint{M}=\FrInst{\qfor{Q}}/\qwhere{Q}$ as the \emph{frozen instance} of the query
$\query{Q}$.

The bimodule $\bimod{M}$ associated to $\query{Q}$ in turn determines a functor
$\Gamma_{M}\colon \sinst{S} \to \sinst{R}$; we will abuse notation by writing it as
$\Gamma_{\query{Q}}$. It is this functor which carries out the operation of ``querying an
$\schema{S}$-instance using $\query{Q}$''. As the result schema $\schema{R}$ has a single entity, the
output of this functor can be seen as a single table containing the results of the query, with one
column for each variable in $\qatts{Q}$.

\begin{example}
    \label{queryviaGamma}
  Let $\schema{S}$ and $\query{Q}$ be as in \cref{ex:query_syntax}. The query $\query{Q}$ determines
  a schema $\schema{R}$ and a bimodule $\bimod{M}\colon\schema{R}\tickar\schema{S}$ as follows. The
  schema $\schema{R}$ has a single entity \dash call it ``${*}$'' \dash and attributes
  $\Mor{emp\_last},\Mor{dept\_name}\colon{*}\to\Str$, and $\Mor{diff}\colon{*}\to\Int$ coming from
  $\qatts{Q}$.

  The bimodule $\bimod{M}$ is determined by the frozen instance
  $\bint{M}=\FrInst{e:\EntOb{Emp},d:\EntOb{Dept}}/\qwhere{Q}$, where $\qwhere{Q}$ contains the two
  equations from \cref{ex:query_syntax}, together with the morphism
  \[
    \bret{M}\colon \FrInst{\Mor{emp\_last},\Mor{dept\_name}:\Str,\Mor{diff}:\Int} \to
      \FrInst{e:\EntOb{Emp},d:\EntOb{Dept}}/\qwhere{Q}
  \]
  given by the context morphism
  \begin{center}
      $[\Mor{emp\_last}\coloneqq e\Mor{.last}$,\;\;
      $\Mor{dept\_name}\coloneqq d\Mor{.name}$, \;\;
      $\Mor{diff}\coloneqq d\Mor{.sec.sal}-e\Mor{.sal}]$.
  \end{center}
  Note that the schema $\schema{R}$ is isomorphic to the one from \cref{sch_map_F}, and that the
  frozen instance $\bint{M}$ is the instance from \cref{frozeninstance}.

  Let $\inst{J}\in\sinst{S}$ be the instance from \cref{Sinstance}. We will now compute the result
  $\Gamma_Q(\inst{J})\in\sinst{R}$ of querying $\inst{J}$ with $\query{Q}$. On the single entity of
  $\schema{R}$, we have by \cref{defGamma} that
  $(\Gamma_QJ)({*})=\sinst{S}(\bint{M},\inst{J})$, and we saw in
  \cref{frozeninstancetransform} that this set has three elements. The $\Type$-algebra
  $\iterms{(\Gamma_QJ)}$ is the same as $\iterms{J}$, by \cref{eq:Gamma_preserves_types}.
  The values of the attributes of $\schema{R}$ are determined using the morphism $\bret{M}$:
  \begin{equation}
      \label{eq:query_atts}
    (\Gamma_QJ)({*}) = \sinst{S}\bigl(\, \FrInst{\qfor{Q}}/\qwhere{Q},\;\inst{J}\, \bigr)
      \longrightarrow
    \sinst{S}\bigl(\, \FrInst{\qatts{Q}},\;\inst{J}\, \bigr)
      \iso \smashoperator{\prod_{(x:\tau)\in\qatts{Q}}} \iterms{J}(\tau).
  \end{equation}
  A transform $\FrInst{\qfor{Q}}\to\inst{J}$ has an underlying context morphism $\Phi\to\qfor{Q}$,
  where $\Phi$ is the context of the canonical presentation of $\inst{J}$ (see
  \cref{rmk:canonical_presentation_inst}). We can express \cref{eq:query_atts} using context
  morphisms: given a transform $\FrInst{\qfor{Q}}/\qwhere{Q}\to\inst{J}$ corresponding to an element
  of $(\Gamma_QJ)({*})$, simply compose its underlying context morphism $\Phi\to\qfor{Q}$
  with $\qret{Q}\colon\qfor{Q}\to\qatts{Q}$. The attributes of this row of the table ``${*}$'' can
  be read off of the resulting context morphism $\Phi\to\qatts{Q}$.

  Doing this, we obtain the result
  \begin{center}
    \small
    \begin{tabular}{@{}>{\itshape}c|c>{}c>{}cr@{}}
      \toprule
      ${*}$ & \Mor{emp\_last} & \Mor{dept\_name} & \Mor{diff} \\
      \midrule
      1     & Noether         & HR               & $100$      \\
      2     & Euclid          & HR               & $150$      \\
      3     & Euclid          & Admin            & $0$        \\
      \bottomrule
  \end{tabular}
  \end{center}
  (Note that the row-ids are arbitrary.) For example, the first row corresponds to the transform
  $\FrInst{\qfor{Q}}\to\inst{J}$ given by $[e\coloneq\mathit{e2},d\coloneq\mathit{d1}]$. Composing
  this with $\qret{Q}$ gives $[\Mor{emp\_last}\coloneq\mathit{e2}.\Mor{last},
  \Mor{dept\_name}\coloneq\mathit{d1}.\Mor{name},
  \Mor{diff}\coloneq\mathit{d1}.\Mor{sec.sal}-\mathit{e1}.\Mor{sal}]$, which simplifies to the first
  row of the table above.
\end{example}

\begin{remark}
    \label{pideltaquery}
  By \cref{Gamma=DeltaPie}, the result of any query $\query{Q}$ on $\schema{S}$, with result schema
  $\schema{R}$ and associated bimodule $\bimod{M}$, is equivalently obtained as the composite of
  data migration functors
  \begin{equation}
      \label{eq:pideltaquery}
  \begin{tikzcd}[row sep=1ex]
    \sinst{S} \ar[r,"\Pi_{i_\schema{S}}"]
      & \sinst{\Col{M}} \ar[r,"\Delta_{i_\schema{R}}"]
      & \sinst{R} \\
    \inst{J} \ar[r,mapsto]
      & \Pi_{i_\schema{S}}(\inst{J}) \ar[r,mapsto]
      & \Delta_{i_\schema{R}} \bigl(\Pi_{i_{\schema{S}}}(\inst{J})\bigr)\cong\Gamma_Q(\inst{J})
  \end{tikzcd}
  \end{equation}
  where the schema mappings
  $\map{i_\schema{R}}\colon\schema{R}\to\Col{M}\from\schema{S}\cocolon\map{i_\schema{S}}$ into the
  bimodule collage $\Col{M}$ are as in \cref{i_Ri_Sconstruction}.

  For example, the query in \cref{queryviaGamma} gave the same result as we found using
  \cref{Piexample,Deltaexample}. One can check that the bimodule collage
  is $\Col{M}\cong\schema{T}$ given in \cref{Tpicture}, and the mappings $\map{F}$, $\map{G}$
  of the mentioned examples are the collage inclusions. Hence this is an instance of \cref{eq:pideltaquery}.
\end{remark}

\begin{remark}
  In practice, one would like a guarantee that a query result $\Gamma_Q(\inst{J})$ is finite
  whenever $\inst{J}$ is finite. To achieve this, one has to place the extra condition on the query
  $\query{Q}$ that only entities \dash no types \dash appear in $\qfor{Q}$. This condition also ensures the
  \emph{domain independence} \cite{Abiteboul:1995a} of the query, meaning that it is not necessary to enumerate the elements of a type to compute the query result.   
\end{remark}

\subsection{Uber-queries}
  \label{sec:uberqueries}

If queries correspond to $(\schema{R},\schema{S})$-bimodules where $\schema{R}$ has only one entity,
then we need a name for more general bimodules; we call them \emph{uber-queries}. An uber-query is
roughly a diagram of queries. The morphisms in this diagram will be called Keys, and our syntax is
accordingly extended to be of the form For-Where-Keys-Return.

\begin{example}
  To describe a bimodule of the following form
  \begin{displaymath}
    \begin{tikzpicture}[schema]
      \node[entity]                            (A)    {A};
      \node[entity,below=of A]                 (A')   {A'};
      \node[type,right=1.2 of A]               (Int)  {Int};
      \node[type,below=of Int]                 (Str)  {Str};
      \node[type,below right=.5 and .5 of Int] (Bool) {Bool};
      \path (A)  edge["dept\_name",inner sep=2pt] (Str)
                 edge["diff"]                     (Int)
            (A') edge["f"]                        (A)
                 edge["last"']                    (Str);
      \node[entity,right=2.4 of Str]            (Dept) {Dept};
      \node[entity,above=of Dept]               (Emp)  {Emp};
      \node[type,right=1.2 of Emp]              (Int')  {Int};
      \node[type,below=of Int']                 (Str')  {Str};
      \node[type,below right=.5 and .5 of Int'] (Bool') {Bool};
      \path (Emp)  edge["wrk"'name=wrk,bend right,
                        inner sep=2pt,pos=.4]    (Dept)
                  edge["mgr"name=mgr,loop above] (Emp)
                  edge["sal"]                    (Int')
                  edge["last",inner sep=2pt]     (Str')
            (Dept) edge["sec"',bend right,
                        inner sep=2pt,pos=.4]    (Emp)
                  edge["name"']                  (Str');
      \path (Str'.south east) ++(9pt,0) node[anchor=south west,schema eqs] (Eqs) {
          plus eqs\\from \cref{Tpicture}
      };
      \node[fit=(wrk) (mgr) (Eqs) (Bool')] (Sbox) {};
      \node[schema name,below left=of Sbox.north east] {$\schema{S}$};
      \node[fit=(mgr.north -| A) (Bool) (A')] (Tbox) {};
      \node[schema name,below left=of Tbox.north east] {$\schema{L}$};
      \draw[->,tick,very thick] (Tbox) to node[above] {$\bimod{N}$} (Sbox);
    \end{tikzpicture}
  \end{displaymath}
  we will need two instances $\inst{I}\coloneqq N(\EntOb{A})$ and $\inst{I'}\coloneqq
  N(\EntOb{A}')$, and a transform $N(f)\colon\inst{I'}\to\inst{I}$ between them, as well as three terms
  $\Mor{diff}, \Mor{name}, \Mor{last}$ of the specified types in $\inst{I}$ and $\inst{I'}$. Indeed,
  this gives a functor $\op{\scol{L}}\to\sinst{S}$ (where objects $\Int$, $\Str$, and $\Bool$ in
  the type side are sent to the corresponding representable instances, as usual;
  \cref{def:bimodule}).

  In \cref{ex:presented_transform}, we constructed two $\schema{S}$-instances
  and a transform $\inst{I'}\to\inst{I}$ between them . We will rewrite them, together with the three terms, in
  For-Where-Keys-Return syntax below.
  \begin{align*}
    \EntOb{A}'= && \EntOb{A}= & \\[1ex]
    \textsf{FOR: } & e':\EntOb{Emp}
    & \textsf{FOR: } & e:\EntOb{Emp},d:\EntOb{Dept} \\
    \textsf{WHERE: } & e'\Mor{.wrk.name}=\tn{Admin}
    & \textsf{WHERE: } & e\Mor{.wrk.name}=\tn{Admin} \\
    & e'\Mor{.sal}\leq e'\Mor{.wrk.sec.sal}
    && e\Mor{.sal}\leq d\Mor{.sec.sal} \\
    \textsf{KEYS: } & \Mor{f}\coloneqq\EntOb{Emp}[e\coloneqq e', d\coloneqq e'\Mor{.wrk}] && \\
    \textsf{RETURN: } & \Mor{last}\coloneqq e'\Mor{.last}
    & \textsf{RETURN: } & \Mor{dept\_name}\coloneqq e\Mor{.wrk.name} \\
    &&& \Mor{diff}\coloneqq d\Mor{.sec.sal}-e\Mor{.sal}
  \end{align*}

  For any $\schema{S}$-instance $\inst{J}$, we can apply $\Gamma_N\colon\sinst{S}\to\sinst{L}$. If
  $\inst{J}$ is as in \cref{Sinstance}, then $\Gamma_N(\inst{J})$ is the following
  $\schema{L}$-instance:
  \begin{center}
    \small
    \begin{tabular}[c]{@{}>{\itshape}c|c>{}c>cr@{}}
      \toprule
      \EntOb{A}'  & \Mor{last} & \Mor{f}     \\
      \midrule
      0           & Euclid     & \itshape{3} \\
      \bottomrule
  \end{tabular}
  \qquad
  \begin{tabular}[c]{@{}>{\itshape}c|c>{}c>{}cr@{}}
    \toprule
    \EntOb{A} & \Mor{dept\_name} & \Mor{diff} \\
    \midrule
    1         & HR               & $100$      \\
    2         & HR               & $150$      \\
    3         & Admin            & $0$        \\
    \bottomrule
  \end{tabular}
  \end{center}
\end{example}

\section{Implementation}\label{Impl}

The mathematics developed in this paper has been implemented in a language we call CQL, which can be downloaded from \url{http://categoricaldata.net}.  In this section we briefly discuss implementation issues that arise, namely in negotiating between syntactic presentations (e.g.\ those discussed in \cref{sec:presentations_syntax}) and the objects they denote.  An in-depth discussion is available in~\cite{Schultz_Wisnesky_2017}.

Most constructions involving finitely-presented categories, including query evaluation and collage
construction, depend crucially on solving word problems in categories, and these problems are not in
general decidable. In \cref{solving_word_problems} we describe our approach to solving word
problems, and in \cref{saturation_and_qeval} we describe how we use word problems to compute
collages and evaluate queries.

\subsection{Solving Word Problems}
  \label{solving_word_problems}

Given a category presentation $(G,E)$ as described in \cref{ssec:category_presentations}, the word
problem is to decide if two terms (words) in $G$ are equivalent under $E$.  The word problem is
obviously semi-decidable: to prove if two terms $p$ and $q$ in $G$ are equal under $E$, we can
systematically enumerate all of the (usually infinite) consequences of $E$ until we find $p = q$.
However, if $p$ and $q$ are not equal, then this enumeration will never stop. In practice, not only
is enumeration computationally infeasible, but for query evaluation and collage construction, we
require a true decision procedure: an algorithm which, when given $p$ and $q$ as input, will always
terminate with ``equal'' or ``not equal''. Hence, we must look to efficient, but incomplete,
automated theorem proving techniques to decide word problems.

The \FPQL\ tool allows any theorem prover to be used to decide word problems. In addition, the
\FPQL\ tool also provides a default, built-in theorem prover based on Knuth-Bendix completion
\cite{Knuth:1970a}: from $(\Sigma,E)$, it attempts to construct a system of re-write rules (oriented
equations), $R$, such that $p$ and $q$ are equal under $E$ if and only if $p$ and $q$ re-write to
syntactically equal terms (so-called \emph{normal forms}) under $R$.  We demonstrate this with an
example.  Consider a presentation of the algebraic theory of groups, on the left, below.
Knuth-Bendix completion yields the re-write system on the right, below: \footnote{Because there is
only one sort, say $S_\Sigma=\{G\}$, we drop the contexts in the Axiom side. For example, the second
equation \dash \emph{axiom} \dash should be $x:G\vdash (x^{-1} \ast x = 1):G$, according to
\cref{sec:presentations_syntax}.}
\begin{center}
  \begin{tabular}{@{}r@{\hspace{3em}}l@{}}
    Axioms & Re-write rules \\
    $1 \ast x = x$ & $1 \ast x \rightsquigarrow x$ \\
    $x^{-1} \ast x = 1$ & $x^{-1} \ast x \rightsquigarrow 1$ \\
    $(x \ast y) \ast z = x \ast (y \ast z)$ & $(x \ast y) \ast z \rightsquigarrow x \ast (y \ast z)$ \\
    & $x^{-1} \ast (x \ast y) \rightsquigarrow y$ \\
    & $1^{-1} \rightsquigarrow 1$ \\
    & $x \ast 1 \rightsquigarrow x$ \\
    & $(x^{-1})^{-1} \rightsquigarrow x$ \\
    & $x \ast x^{-1} \rightsquigarrow 1$ \\
    & $x \ast (x^{-1} \ast y) \rightsquigarrow y$ \\
    & $(x \ast y)^{-1} \rightsquigarrow y^{-1} \ast x^{-1}$
  \end{tabular}
\end{center}

To see how these re-write rules are used to decide the word problem, consider the two terms $(a^{-1}
\ast a) \ast (b \ast b^{-1})$ and $b \ast ((a \ast b)^{-1} \ast a)$.  Both of these terms re-write
to $1$ under the above re-write rules; hence, we conclude that they are equal. In contrast, the two
terms $1 \ast (a \ast b)$ and $b \ast (1 \ast a)$ re-write to $a \ast b$ and $b \ast a$,
respectively, which are not syntactically the same; hence, we conclude that they are not equal.

The details of how the Knuth-Bendix algorithm works are beyond the scope of this paper.  However, we
make two remarks.  First, Knuth and Bendix's original algorithm (\cite{Knuth:1970a}) can fail even
when a re-write system to decide a word problem exists; for this reason, we use the more modern,
``unfailing'' variant of Knuth-Bendix completion \cite{Bachmair:1989a}.  Second, we remark that
Buchberger's algorithm for computing Gr\"obner bases is a very similar algorithm used in many computer
algebra systems, and it may be seen as the instantiation of the Knuth-Bendix algorithm in the theory
of polynomial rings \cite{Marche:1996a}.

\subsection{Saturation and Query Evaluation}
  \label{saturation_and_qeval}

Given a category presentation $(G,E)$, a decision procedure for the word problem allows us to (semi)
compute the category $\cat{C}$ that $(G,E)$ presents.  To do this, we construct $\cat{C}$ in stages:
first, we find all non-equal terms of size $0$ in $G$; \footnote{By the \emph{size} of a term, we
  mean the height of the associated syntax tree. For example
$\Mor{max}(x\Mor{.sal},x\Mor{.mgr.sal})$ has size of three.} call this $\cat{C}^0$. Then, we add to
$\cat{C}^0$ all non-equal terms of size $1$ that are not equal to a term in $\cat{C}^0$; call this
$\cat{C}^1$.  We iterate this procedure, obtaining a sequence $\cat{C}^0, \cat{C}^1, \ldots$.  If
$\cat{C}$ is indeed finite, then there will exist some $n$ such that $\cat{C}^n = \cat{C}^{n+1} =
\cat{C}$ and we can stop. Otherwise, our attempt to construct $\cat{C}$ will run forever: it is not
decidable whether a given presentation $(G,E)$ generates a finite category. The category $\cat{C}$
will be isomorphic to the category $\Fr(G)/E$ obtained by quotienting the free category on $G$ by
the equations in $E$ (\cref{def:category_presentation}); essentially, $\cat{C}$ represents
equivalence classes of terms by a smallest possible representative, as explained in detail in
\cref{sec:presentations_syntax}. 
(Note that the normal forms chosen by the internal Knuth-Bendix theorem prover 
for the purposes of deciding the word problem need not in general
be the same as the chosen representatives of equivalences classes in $\cat{C}$ described above.) 

Most uses of the \FPQL\ tool involve \emph{saturating} instance presentations into collages so that
they may be examined as tables (e.g.\ \cref{frozeninstance}). This just means replacing part of the
presentation with a canonical presentation (see
\cref{rmk:canonical_presentation,rmk:canonical_presentation_alg,rmk:canonical_presentation_inst}\footnote{For explanatory reasons these particular examples saturate a frozen instance associated 
with a query, but the implementation does not need to saturate frozen instances to evaluate queries.})
The saturation process is very similar to the process described in the preceding paragraph, with one
small difference.  In general, the `type side' of the collage (see \cref{collageschema}) will denote
an infinite category.  For example, if $\Type$ is the free group on one generator $\{a\}$, it will
contain $a$, $a \ast a$, $a \ast a \ast a$, and so on. Hence, it is usually not possible to saturate
the type side of an instance. So, the \FPQL\ tool saturates only the entity side of an instance,
which will often be finite in practice.  From the saturated entity side presentation and a set of
re-write rules for the collage, it is possible to construct a set of tables that faithfully
represent the instance.  The tables for the entity side of the instance are simply a tabular
rendering of the finite category corresponding to the saturated entity side of the instance's
collage ($\cat{C}$ in the preceding paragraph).  The tables for the attributes of the instance must
also contain representatives of equivalence classes of terms, but unlike the entity side case, where
representatives are chosen based on size, it is less clear which representative to choose. For
example, there is an implicit preference to display 1,024 instead of $2^{10}$, even though the
size (as defined in the previous footnote) of the former is greater than the size of the latter.
The \FPQL\ tool allows these representatives to be computed by
external programs, thereby providing a ``hook'' for the tool to interface with other programming languages and systems.
For example, users can provide a Java implementation of natural numbers for the commutative ring type side
used in this paper, and the java compiler will normalize terms like $2^{10}$ into $1024$.  By default, the
\FPQL\ tool will display the normal forms computed by the internal Knuth-Bendix theorem prover in the attribute tables.

\begin{figure}
  \includegraphics[width=6in]{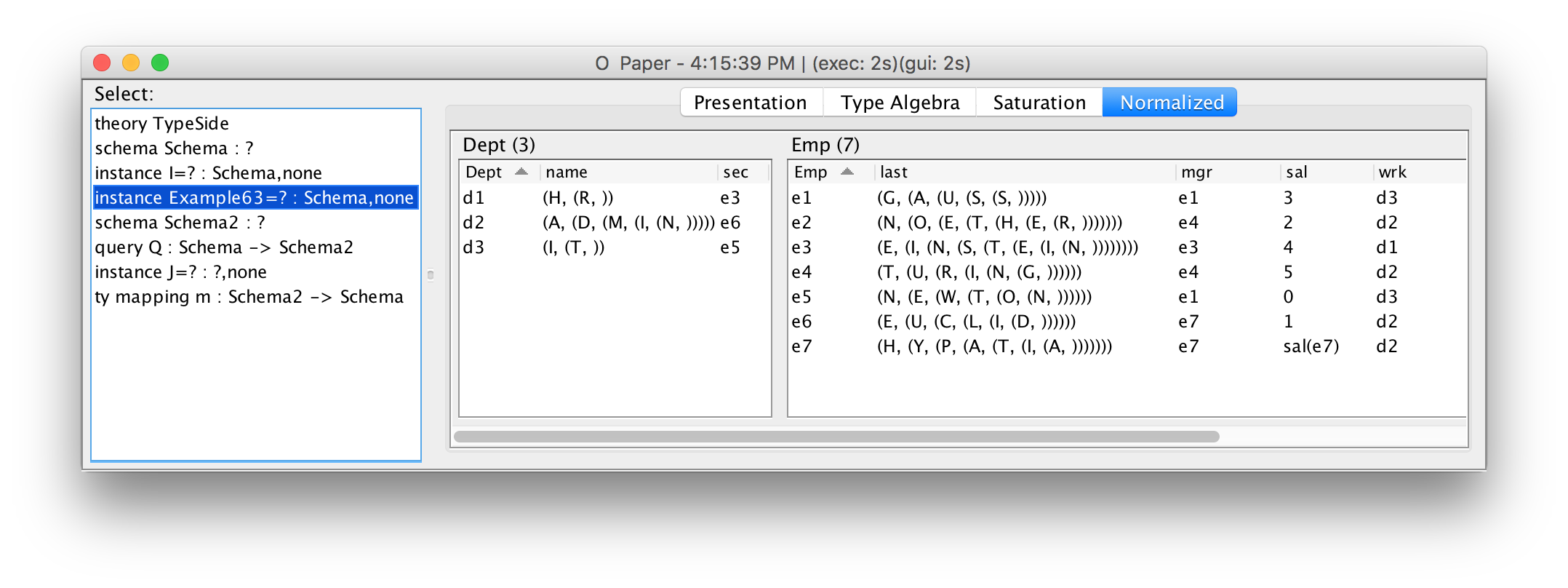}
  \caption{CQL displaying the instance from \cref{Sinstance}}
  \label{fig:OPL_pic}
\end{figure}

To evaluate a query $\query{Q}$ such as that in \cref{queryviaGamma} on a presented
instance $\inst{I}$, we first saturate the entity side of $\inst{I}$ as described in the preceding
paragraph. Evaluation of the query, $\Gamma_Q(\inst{I})$ as in \cref{sec:query_bimodules},
proceeds similarly to evaluation of
`For-Where-Return' queries in traditional SQL systems \cite{Abiteboul:1995a}: first, we compute a
(typically large) set of tuples corresponding to the FOR clause by repeatedly looping through
$\inst{I}$. Then, we filter this set of tuples by the WHERE clause; here we must be sure to decide
equality of tuples under the equational theory for $\inst{I}$, using Knuth-Bendix as described
above.  Finally, we project out certain components of these tuples, according to the RETURN clause.
The result of the query will be a saturated instance, which has a canonical presentation as in
\cref{rmk:canonical_presentation_inst}.

The \FPQL\ tool's tabular rendering of the instance from \cref{Sinstance} is shown in
\cref{fig:OPL_pic}.  Because a unary representation of the integers is computationally inefficient, for expediency
the employee salaries in the \FPQL\ program have been reduced compared to \cref{Sinstance}.  
A more efficient axiomatization of the integers, such as using binary, can also be used.





\appendix

\section{Componentwise composition and exponentiation in \texorpdfstring{$\Data$}{Data}}
  \label{sec.componentwise}

We defined composition of bimodules and 2-cells in \cref{bimodulecomposition,2cellcomp}
and exponentiation of bimodules in \cref{Dataisclosed}. In \cref{componentwisebimodule} we saw
that bimodules can be equivalently defined in several components, separating the entity and type
sides of the structure. It is natural to ask what composition and exponentiation (and as special
cases, the data migration functors) look like in this decomposed view.

In fact, when first working out the ideas presented in this paper, we used componentwise
formulas to understand all the constructions. In writing it up, we decided that the coend formulas
were more succinct and often easier to work with; however, the machinery below still turns out to be
useful in certain cases, so we present it without proof for the interested reader.

Recall the left tensor $\otimes$ defined in \cref{def:otimes}, which `preserves algebraicity' of
profunctors on the right.

\begin{proposition}
    \label{decomposed_bimod_comp}
  The composition $\bimod{M}\odot\bimod{N}$ of two bimodules $\schema{R} \xtickar{\bimod{M}}
  \schema{S} \xtickar{\bimod{N}} \schema{T}$ in $\Data$ \cref{bimodulecomposition} is equivalently
  given in components as follows: $\bnodes{(M\odot N)} = \bnodes{M}\odot\bnodes{N}$ in $\dProf$, and
  the rest of the components are given using a pushout, as in the following diagram in the category
  $\Fun{\op{\snodes{R}},\TypeAlg}$:
  \begin{equation*}
    \begin{tikzcd}[labels={inner sep=.2ex}
                  ,column sep={between origins,4em}
                  ,row sep={between origins,4em}
                  ,bend angle=30]
      \satts{R} \ar[dr,"\bret{M}"] \ar[ddrr,bend right=40,"\bret{(M\odot N)}"']
      && \bnodes{M}\otimes\satts{S}
        \ar[dl,"\batts{M}"']
        \ar[dr,"\bnodes{M}\otimes\bret{N}"']
        \ar[dd,phantom,sloped,"\displaystyle\ulcorner"{pos=.9,rotate=45}]
      && \bnodes{M}\otimes\bnodes{N}\otimes\satts{T} \ar[dl,"\bnodes{M}\otimes\batts{N}"']
      \ar[ddll,bend left=40,"\batts{(M\odot N)}"] \\
      & \bterms{M} \ar[dr]
      && \bnodes{M}\otimes\bterms{N} \ar[dl] & \\
      && \bterms{(M\odot N)}
    \end{tikzcd}
  \end{equation*}
\end{proposition}

This follows from the following lemma, which can be proven using \cref{prop:inverse_equiv}:

\begin{lemma}
    \label{cor:Res_of_odot}
  Let $L\colon A_0\tickar A_1$, $M\colon B_0\tickar B_1$, and $N\colon C_0\tickar C_1$ be proarrows
  in an equipment $\dcat{D}$ with extensive collages and local finite colimits. Let
  $X\in\Simp{L}{M}$ and $Y\in\Simp{M}{N}$ be simplices, and let $P\colon\scol{L}\tickar\scol{M}$ and
  $Q\colon\scol{M}\tickar\scol{N}$ be proarrows such that $X\iso\res{L}{M}(P)$ and
  $Y\iso\res{M}{N}(Q)$ (see \cref{res_functor}). Then the components of $\res{L}{N}(P\odot Q)$ can
  be computed by pushout:
  \begin{equation} \label{eq:simplex_composition}
    \begin{tikzcd}
      X_{i,0}\odot M\odot Y_{1,j} \ar[r] \ar[d] \ar[dr,phantom,"\ulcorner" pos=.9]
      & X_{i,0}\odot Y_{0,j} \ar[d] \\
      X_{i,1}\odot Y_{1,j} \ar[r]
      & \comp{i}_{A_i}\odot(P\odot Q)\odot\conj{i}_{C_j}
    \end{tikzcd}
  \end{equation}
  Moreover, the 2-cells of $\res{L}{N}(P\odot Q)$ are found using these pushouts in an evident way.
\end{lemma}

\begin{proposition}
    \label{decomposed_2cell_comp}
  The horizontal composition of 2-cells in $\Data$
  \[
    \begin{tikzcd}
      \schema{R} \ar[r,tick,"\bimod{M}" domA] \ar[d,"\map{F}"']
      & \schema{S} \ar[r,tick,"\bimod{N}" domB] \ar[d,"\map{G}"]
      & \schema{T} \ar[d,"\map{H}"] \\
      \schema{R}' \ar[r,tick,"\bimod{M}'"' codA]
      & \schema{S}' \ar[r,tick,"\bimod{N}'"' codB]
      & \schema{T}'
      \twocellA{\twoCell{\theta}}
      \twocellB{\twoCell{\phi}}
    \end{tikzcd}
  \]
  is given (\cref{decomposed_2cell}) by $\twonodes{(\theta\odot\phi)}=
  \twonodes{\theta}\odot\twonodes{\phi}$, while $\twoterms{(\theta\odot\phi)}$ is induced by the
  diagram
  \[
    \begin{tikzcd}[column sep=4.2em]
      \bterms{M} \ar[d,"\twoterms{\theta}"']
      & \bnodes{M}\otimes\satts{S} \ar[d,"\twonodes{\theta}\otimes\matts{G}"]
      \ar[l,"\batts{M}"'] \ar[r,"\bnodes{M}\otimes\bret{N}"]
      & \bnodes{M}\otimes\bterms{N} \ar[d,"\twonodes{\theta}\otimes\twoterms{\phi}"] \\
      \comp{\mnodes{F}}\otimes\bterms{M}'
      & \comp{\mnodes{F}}\otimes\bnodes{M}'\otimes\satts{S}'
      \ar[l,"\id\otimes\batts{M}'"] \ar[r,"\id\otimes\bnodes{M}'\otimes\bret{N}'"']
      & \comp{\mnodes{F}}\otimes\bnodes{M}'\otimes\bterms{N}'
    \end{tikzcd}
  \]
  where $\bterms{(M\odot N)}$ and $\bterms{(M'\odot N')}$ are the pushouts of the top and bottom
  rows respectively, by \cref{decomposed_bimod_comp}.
\end{proposition}

\begin{proposition}
    \label{decomposed_exponent}
  Let $\bimod{N}\colon\schema{S}\tickar\schema{T}$ and $\bimod{P}\colon\schema{R}\tickar\schema{T}$
  be bimodules. The bimodule $\bimod{N}\rhd\bimod{P}\colon\schema{R}\tickar\schema{S}$ is given as
  follows: the entity component $\bnodes{(N\rhd P)}$ is computed by a pointwise pullback, for any
  objects $s\in\snodes{S}$, $r\in\snodes{R}$,
  \[
    \begin{tikzcd}
      \bnodes{(N\rhd P)}(r,s) \ar[r] \ar[dd] \ar[ddr,phantom,"\lrcorner" very near start]
      & \Set^{\snodes{T}}\bigl(\bnodes{N}(s,\textrm{--}),\bnodes{P}(r,\textrm{--})\bigr)
        \ar[d,"\batts{P}"] \\
      & \Set^{\snodes{T}}\Bigl(\bnodes{N}(s,\textrm{--}),
          \TypeAlg\bigl(\satts{T}(\textrm{--}),\bterms{P}(r)\bigr)\Bigr)
      \ar[d,"\iso"] \\
      \TypeAlg\bigl(\bterms{N}(s),\bterms{P}(r)\bigr) \ar[r,"\batts{N}"]
      & \TypeAlg\bigl((\bnodes{N}\otimes\satts{T})(s),\bterms{P}(r)\bigr).
    \end{tikzcd}
  \]
  Equivalently, $\bnodes{(N\rhd P)}(r,s)=\sinst{T}\bigl(N(s),P(r)\bigr)$. The other components are
  $\bterms{(N\rhd P)}=\bterms{P}$, $\bret{(N\rhd P)}=\bret{P}$, and $\batts{(N\rhd P)}$ is the
  composition
  \[
    \bnodes{(N\rhd P)}\otimes\satts{S} \to
    \TypeAlg\bigl(\bterms{N}(\textrm{--}),\bterms{P}(\textrm{--})\bigr)\otimes\bterms{N}
    \to \bterms{P}.
  \]
\end{proposition}

\section{Errata}
  \label{sec.errata}

This section describes certain minor errors revealed by the authors of \cite{gorenroig2024presentingprofunctors}.  Their work is still ongoing; see 
\cite{err1} , \cite{err2} and \cite{err3} for details, and 
 \cite{gorenroig2024presentingprofunctors} discusses the simple case of an empty typeside.

\subsection{Non-equivalence of syntactic and semantic categories}

\cref{rmk:canonical_presentation} claims that the functor
$\Cxt{}\colon\APr\to\ATh$ is fully faithful, and thus an equivalence.
\cref{rmk:canonical_presentation_alg} similarly claims that the
category of $\T$-algebra presentations as given in
\cref{def:presented_algebra} is equivalent to $\T\alg$.  Neither of these claims are true:

\begin{example}\label{ex:non-equiv_theories}
  Let $P$ be the algebraic presentation with one sort, $p$, one
  function symbol, $f:(p)\to p$, and no equations.  Let $Q$ be the
  algebraic presentation with one sort, $q$, one function symbol,
  $g:(q)\to q$, and one equation $x:q\vdash g(x)=x$.
  Now define morphisms $F,G:P\to Q$ by $F(p),G(p)\coloneqq q$ and
  $F(f)\coloneqq (x:q\vdash g(x))$, $G(f)\coloneqq (x:q\vdash x)$.
  Then $F\neq G$ while $\Cxt{}(F)=\Cxt{}(G)$, so $\Cxt{}$ is not
  faithful.
\end{example}

\begin{example}
  Let $P$ be as in \cref{ex:non-equiv_theories}.  Define algebra
  presentations $(\Gamma, E)$ and $(\Gamma', E')$ on $P$, where
  $\Gamma\coloneqq (x:p)$, $E\coloneqq\varnothing$,
  $\Gamma'\coloneqq (y:p)$, and $E'\coloneqq\{f(y)=y\}$.
  Now define morphisms $F,G:(\Gamma, E)\to (\Gamma', E')$ by $F=[x\coloneqq y]$ and
  $G=[x\coloneqq f(y)]$.
  Then $F\neq G$ while the images of $F$ and $G$ in $\T\alg$ are
  equal.
\end{example}

There are two possible ways to address this error (stated in the case
of theories, but should work \emph{mutatis mutandis} for the case of
instances):

\begin{itemize}
\item Define what it means for two morphisms in $\APr$ to be
  ``provably equal''.  Show that ``provable equality'' is a congruence
  $\approx$.  Then the quotient category $\APr/\approx$ will be
  equivalent to $\ATh$.  See
  \cite{gorenroig2024presentingprofunctors} Prop. 2.12 for how to do this
  in the special case of category presentations.
\item Make $\APr$ into a bicategory where, given morphisms $f,g$,
  there is a unique morphism $f\Rightarrow g$ iff $f\approx g$.
  Consider $\ATh$ a bicategory in a trivial sense.  Then $\APr$ and
  $\ATh$ should be equivalent as bicategories.
\end{itemize}

The latter approach may seem excessive for this simple example, but points to an essential bicategorical nature of syntax taken up in \cite{gorenroig2024presentingprofunctors}-- see Remark 3.19 for a
case in which this viewpoint is more clearly fruitful.

\subsection{Strict and Weak Bimodules}

The definition of \emph{bimodule} in \cref{def:bimodule} requires that
the square \eqref{eqn:bimodule_form2} commutes up to \emph{equality},
i.e. $M(\tau)=\yoneda(\tau)$ for any $\tau\in\Type$.  However, the
formula \eqref{bimodulecomposition} given for bimodule composition in
\cref{def:Data} results in a functor for which
\eqref{eqn:bimodule_form2} commutes only up to \emph{isomorphism},
i.e. we only have an isomorphism $(M\odot N)(\tau)\cong\yoneda(\tau)$,
natural in $\tau$, not an equality.  Thus bimodules as defined do not
form a double category, as stated.

To examine this discrepancy, let us call bimodules as defined in
\cref{def:bimodule} ``strict bimodules'', and define a weaker notion
as follows:

\begin{definition}
    \label{def:weakbimodule}
  Let $\schema{R}$ and $\schema{S}$ be database schemas. A \emph{weak bimodule} $\bimod{M} \colon
  \schema{R} \tickar \schema{S}$ is a functor
  $M\colon \op{\scol{R}} \to \sinst{S}$ along with a natural isomorphism
  
  \begin{equation}\label{eqn:bimodule_form2-fixed}
    \begin{tikzcd}
      \op{\Type} \ar[r,"\op{\inctypes}"] \ar[d,"\op{\inctypes}"']
      & \op{\scol{S}} \ar[d,"\yoneda"] \\
      \op{\scol{R}} \ar[r,"M"'] \ar[ur, phantom, "\scriptstyle\Uparrow\alpha_M"{description}]
      & \sinst{S}
    \end{tikzcd}
  \end{equation}
  or succinctly, $\alpha_M:M(\tau)\cong\yoneda(\tau)$
  natural in $\tau\in\Type$.

  A morphism of $(\schema{R},\schema{S})$-bimodules $\twoCell{\phi}\colon\bimod{M}\to\bimod{N}$ is a natural
  transformation $\phi\colon M\Rightarrow N$ such that $\alpha_N\circ\phi_\tau=\alpha_M$ for all $\tau\in\Type$.
\end{definition}

If $\alpha_M$ is the identity, then $\bimod{M}$ is strict.  Note that we need to include the isomorphism $\alpha_M$ as
data in the definition of $\bimod{M}$; it is not enough to say ``such
that ... is isomorphic''.  In fact, changing this isomorphism can
result in substantially different bimodules (the possible choices of
$\alpha_M$ are in 1-1 correspondence with the automorphisms
of $\Type$).

The  characterization of strict bimodules in
\cref{prop:semidecomposed_bimodule} is often useful.  Here
is a similar characterization of weak bimodules:

\begin{lemma}
  A weak bimodule $\bimod{M}\colon\schema{R} \tickar \schema{S}$ can be equivalently
  represented as a quadruple
  \begin{align*}
    \batts{M}&\colon\op{\snodes{R}}\to\sinst{S},\\
    \bterms{M}&\colon\op{\Type}\to\sinst{S},\\
    \bret{M}&\colon \satts{R}\Rightarrow\sinst{S}(\bterms{M},\batts{M}),\\
    \alpha_M&\colon\yoneda\circ\op{\inctypes}\cong\bterms{M}
  \end{align*}
\end{lemma}

If the bimodule is strict, then this reduces to \cref{prop:semidecomposed_bimodule}.  Define ``strictification'' to relate strict and weak bimodules.

\begin{definition}
  Let $\bimod{M}$ be a weak bimodule.  Then define the
  \emph{strictification} $\widetilde{\bimod{M}}$ of $\bimod{M}$ as the
  following strict bimodule (utilizing
  \cref{prop:semidecomposed_bimodule}):
  \begin{align*}
    \widetilde{\batts{M}}&\coloneqq\batts{M}\\
    \widetilde{\bret{M}}&\coloneqq \left(\satts{R}\xRightarrow{\bret{M}}\sinst{S}(\bterms{M},\batts{M})\xRightarrow{(\alpha_M^{-1})^*}\sinst{S}(\yoneda\circ\op{\inctypes},\batts{M})\cong\batts{M}(-,\inctypes) \right)
  \end{align*}
\end{definition}

\begin{lemma}
  \begin{itemize}
    \item There is a canonical isomorphism
      $\bimod{M}\cong\widetilde{\bimod{M}}$.
    \item Strictification $\bimod{M}\mapsto\widetilde{\bimod{M}}$ extends to a
      functor from the category of weak bimodules to the category of
      strict bimodules.
    \item The aforementioned canonical isomorphism is natural in $\bimod{M}$.
    \item The strictification functor is quasi-inverse to the
      forgetful functor from strict to weak bimodules, as witnessed by
      the canonical isomorphism.
    \item Thus the categories of strict and weak bimodules are equivalent.
  \end{itemize}
\end{lemma}

To resolve this problem, the authors of \cite{gorenroig2024presentingprofunctors} consider two
possibilities:

\begin{itemize}
\item Only use strict bimodules, and append a strictification step to
  the definition of bimodule composition.  This would work in theory,
  and it is indeed what is done when composing presentations of
  bimodules in practice (see \cite[Section
    4.3.3]{Schultz_Wisnesky_2017}, also see \ref{bimodulepresentationserratasection} for a general
  comment on bimodule presentations).  However, it is mathematically
  inelegant, perhaps even ``evil'' (see
  \cite{nlab:principle_of_equivalence}), similar to insisting that a
  functor $F:C\to\text{Set}$ is not representable unless it is
  \emph{equal} to $C(c,-)$ for some $c$, notwithstanding that it may
  be isomorphic to $C(c,-)$.  What is lost here are not merely
  aesthetic, because many key insights, such as weak associativity of
  composition, cease to be intuitive once there are strictifications
  everywhere.
\item Use weak bimodules.  But then everything gets a lot more
  complicated, as $\alpha_M$ must always be accounted for.
  Although perhaps this complexity subsides.
\end{itemize}

\subsection{Right Closures of Bimodules}

The formula given for right closure in \cref{Dataisclosed} is
incorrect.  The proof of this proposition is erroneous as it omits the
requirement that $M\circ\op{\inctypes}\cong
\yoneda\circ\op{\inctypes}$ through some $\alpha_M$ as discussed above--- the central application of the
$\Lambda_N\dashv\Gamma_N$ adjunction does not go through anymore if
this requirement is observed.  On the other hand, the formula given
for right closure in \cref{decomposed_exponent} is correct, and
non-equivalent to the formula in \cref{Dataisclosed}.

\subsection{Unexpected Properties of Bimodule Presentations}\label{bimodulepresentationserratasection}

Bimodule presentations, as defined in this paper, have the following unfortunate property:

\begin{lemma}
  There exist finite bimodule presentations $(\Omega, E)$ and
  $(\Omega', E')$, presenting composable bimodules $\kappa[\Omega]/E$
  and $\kappa[\Omega']/E'$, such that the composite bimodule
  $\kappa[\Omega]/E \odot \kappa[\Omega']/E'$ has no finite
  presentation.
\end{lemma}
\begin{proof}
  Let the typeside be empty.  Then schemas are categories and
  bimodules are profunctors.  Bimodule presentations are now
  ``uncurried profunctor presentations'', as defined in \cite{gorenroig2024presentingprofunctors} Section
    3.1.  The lemma follows by
  \cite{gorenroig2024presentingprofunctors} Prop 3.9.
\end{proof}

On the contrary, the ``uberflowers'' of \cite{Schultz_Wisnesky_2017}
are a non-equivalent way of presenting bimodules, and a construction
is given for composition of finite uberflowers, resulting in a finite
uberflower.  It has not been proven to our knowledge that this
construction agrees with bimodule composition, however we have proved
the correctness of a composition algorithm in the empty-typeside case,
for which ``uberflowers'' become the ``curried profunctor presentations'',
of \cite{gorenroig2024presentingprofunctors}; see Thm. 3.26.

\bibliographystyle{acm}
\bibliography{Library}

\end{document}